\theoremstyle{plain}
\newtheorem{thrm}{Theorem}[section]
\newtheorem{lmm}[thrm]{Lemma}
\newtheorem{prpstn}[thrm]{Proposition}
\newtheorem{dfntn}{Definition}
\newtheorem*{rmk}{Remark}
\newtheorem{nthrm}{Theorem}
\numberwithin{equation}{section}
\newcommand{\Mod}[1]{\ (\mathrm{mod}\ #1)}
\renewcommand{\hat}[1]{\widehat #1}
\begin{document}

\title[Primes in arithmetic progressions to large moduli II]{Primes in arithmetic progressions to large moduli II: Well-factorable estimates}
\author{James Maynard}
\address{Mathematical Institute, Radcliffe Observatory quarter, Woodstock Road, Oxford OX2 6GG, England}
\email{james.alexander.maynard@gmail.com}
\begin{abstract}
We establish new mean value theorems for primes of size $x$ in arithmetic progressions to moduli as large as $x^{3/5-\epsilon}$ when summed with suitably well-factorable weights. This extends well-known work of Bombieri, Friedlander and Iwaniec, who handled moduli of size at most $x^{4/7-\epsilon}$. This has consequences for the level of distribution for sieve weights coming from the linear sieve.
\end{abstract}
%\newgeometry{bottom=0.1cm}
%\begin{samepage}
\maketitle
%\tableofcontents
%\end{samepage}
%\restoregeometry
\parskip 7.2pt
%
%
%
%
%%%%%%%%%%%%%%%%%%%%%%%%%%%%%%%%%%%%%%%%%%%%%%%%%%%%%%%%%%
%
%
%
%
\section{Introduction}\label{sec:Introduction}
%
%
%
%
%%%%%%%%%%%%%%%%%%%%%%%%%%%%%%%%%%%%%%%%%%%%%%%%%%%%%%%%%%
%
%
%
%
The Bombieri-Vinogradov Theorem \cite{Bombieri,Vinogradov} states that for every $A>0$ and $B=B(A)$ sufficiently large in terms of $A$ we have
\begin{equation}
\sum_{q\le x^{1/2}/(\log{x})^B}\sup_{(a,q)=1}\Bigl|\pi(x;q,a)-\frac{\pi(x)}{\phi(q)}\Bigr|\ll_A\frac{x}{(\log{x})^A},
\label{eq:BV}
\end{equation}
where $\pi(x)$ is the number of primes less than $x$, and $\pi(x;q,a)$ is the number of primes less than $x$ congruent to $a\Mod{q}$. This implies that primes of size $x$ are roughly equidistributed in residue classes to moduli of size up to $x^{1/2-\epsilon}$, on average over the moduli. For many applications in analytic number theory (particularly sieve methods) this estimate is very important, and serves as an adequate substitute for the Generalized Riemann Hypothesis (which would imply a similar statement for each \textit{individual} arithmetic progression).

We believe that one should be able to improve \eqref{eq:BV} to allow for larger moduli, but unfortunately we do not know how to establish \eqref{eq:BV} with the summation extended to $q\le x^{1/2+\delta}$ for any fixed $\delta>0$. The Elliott-Halberstam Conjecture \cite{ElliottHalberstam} is the strongest statement of this type, and asserts that for any $\epsilon,A>0$
\begin{equation}
\sum_{q\le x^{1-\epsilon} }\sup_{(a,q)=1}\Bigl|\pi(x;q,a)-\frac{\pi(x)}{\phi(q)}\Bigr|\ll_{\epsilon,A}\frac{x}{(\log{x})^A}.
\label{eq:EH}
\end{equation}
Quantitatively stronger variants of \eqref{eq:BV} such as \eqref{eq:EH} would naturally give quantitatively stronger estimates of various quantities in analytic number theory relying on \eqref{eq:BV}.

In many applications, particularly those coming from sieve methods, one does not quite need to have the full strength of an estimate of the type \eqref{eq:BV}. It is often sufficient to measure the difference between $\pi(x;q,a)$ and $\pi(x)/\phi(q)$ only for a fixed bounded integer $a$ (such as $a=1$ or $a=2$) rather than taking the worst residue class in each arithmetic progression. Moreover, it is also often sufficient to measure the difference between $\pi(x;q,a)$ and $\pi(x)/\phi(q)$ with `well-factorable' weights (which naturally appear in sieve problems) rather than absolute values. With these technical weakenings we \textit{can} produce estimates analogous to \eqref{eq:BV} which involve moduli larger than $x^{1/2}$. Formally, we define `well-factorable' weights as follows.
%
%
%
%
%%%%%%%%%%%%%%%%%%%%%%%%%%%%%%%%%%%%%%%%%%%%%%%%%%%%%%%%%%
%
%
%
%
\begin{dfntn}[Well factorable]
Let $Q\in\mathbb{R}$. We say a sequence $\lambda_q$ is \textbf{well factorable of level $Q$} if, for any choice of factorization $Q=Q_1Q_2$ with $Q_1,Q_2\ge 1$, there exist two sequences $\gamma^{(1)}_{q_1},\gamma^{(2)}_{q_2}$ such that:
\begin{enumerate}
\item $|\gamma^{(1)}_{q_1}|,|\gamma^{(2)}_{q_2}|\le 1$ for all $q_1,q_2$.
\item $\gamma^{(i)}_{q}$ is supported on $1\le q\le Q_i$ for $i\in\{1,2\}$.
\item We have
\[
\lambda_q=\sum_{q=q_1q_2}\gamma^{(1)}_{q_1}\gamma^{(2)}_{q_2}.
\]
\end{enumerate}
\end{dfntn}
%
%
%
%
%%%%%%%%%%%%%%%%%%%%%%%%%%%%%%%%%%%%%%%%%%%%%%%%%%%%%%%%%%
%
%
%
%
The following celebrated result of Bombieri-Friedlander-Iwaniec \cite[Theorem 10]{BFI1} then gives a bound allowing for moduli as large as $x^{4/7-\epsilon}$ in this setting.
%
%
%
%
%%%%%%%%%%%%%%%%%%%%%%%%%%%%%%%%%%%%%%%%%%%%%%%%%%%%%%%%%%
%
%
%
%
\begin{nthrm}[Bombieri, Friedlander, Iwaniec]\label{nthrm:BFI1}
Let $a\in\mathbb{Z}$ and $A,\epsilon>0$. Let $\lambda_q$ be a sequence which is well-factorable of level $Q\le x^{4/7-\epsilon}$. Then we have
\[
\sum_{\substack{q\le Q\\(q,a)=1}}\lambda_q\Bigl(\pi(x;q,a)-\frac{\pi(x)}{\phi(q)}\Bigr)\ll_{a,A,\epsilon}\frac{x}{(\log{x})^A}.
\]
\end{nthrm}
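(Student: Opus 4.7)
My plan is to follow the scheme originally developed by Linnik and Fouvry--Iwaniec and refined by Bombieri, Friedlander and Iwaniec. First I would apply Heath-Brown's combinatorial identity (of some fixed level $k$) to $\Lambda(n)$, together with dyadic partitioning, to reduce the theorem to $O((\log x)^{O(1)})$ bilinear estimates of the form
\[
\sum_{\substack{q\le Q\\(q,a)=1}}\lambda_q\Biggl(\sum_{\substack{m\sim M,\ n\sim N\\mn\equiv a\Mod{q}}}\alpha_m\beta_n-\frac{1}{\phi(q)}\sum_{\substack{m\sim M,\ n\sim N\\(mn,q)=1}}\alpha_m\beta_n\Biggr)\ll_A \frac{x}{(\log x)^A},
\]
with $MN\asymp x$ and bounded coefficients $\alpha_m,\beta_n$. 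These naturally split into a \emph{Type I} regime, where one of the coefficients is smooth or constant on short intervals, and a \emph{Type II} regime, where both are general bounded sequences.

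Type I sums are handled by Poisson summation in the smooth variable modulo $q$, producing complete Kloosterman sums whose Weil bound, combined with the well-factorable structure to separate the modulus, suffices in the appropriate range of $(M,N)$ once $Q\le x^{4/7-\epsilon}$. The substantive difficulty lies in the Type II regime where $M,N\asymp x^{1/2}$. Here I would apply Linnik's dispersion method: Cauchy--Schwarz in $n$ to square out $\beta_n$, subtract the expected diagonal, and reduce matters to estimating
\[
\sum_{q\sim Q}\lambda_q\sum_{n_1,n_2\sim N}\sum_{\substack{m_1,m_2\sim M\\m_1 n_1\equiv m_2 n_2\,(q)}}\alpha_{m_1}\overline{\alpha_{m_2}}-\text{expected main term},
\]
which, after detecting the congruence via additive characters modulo $q$, becomes a weighted average of incomplete Kloosterman sums.

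The well-factorable structure $\lambda_q=\sum_{q_1q_2=q}\gamma^{(1)}_{q_1}\gamma^{(2)}_{q_2}$ is decisive at this step: I would factor $q=q_1q_2$, with $Q_1,Q_2$ chosen to optimize the forthcoming estimate, separate the two moduli by Chinese remainders inside the Kloosterman sums, and arrange the inner averages so that they match the shape required by the Deshouillers--Iwaniec spectral-theoretic bounds (derived from Kuznetsov's formula together with Deligne's Weil--II estimates). These bounds give a power saving beyond Weil, and the final optimization of $Q_1,Q_2$ against $M,N,Q$ is what produces the cutoff $Q\le x^{4/7-\epsilon}$. The principal obstacle is precisely this Deshouillers--Iwaniec step: one must arrange the bilinear sum after dispersion into a shape where the outer variables match the spectral bound and the diagonal contribution is provably negligible, and every gain beyond the $1/2$ barrier ultimately rests on that spectral input, which the well-factorable hypothesis is tailored to exploit.
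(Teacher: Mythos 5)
The paper does not prove Theorem~\ref{nthrm:BFI1}: it is cited directly from \cite[Theorem 10]{BFI1} and serves only as a benchmark. There is therefore no ``paper's own proof'' against which to check your argument; the closest internal analogue is the generalization worked out in Section~\ref{sec:WellFactorable} (Proposition~\ref{prpstn:WellFactorable} and Proposition~\ref{prpstn:MainProp}), together with the sketch of Bombieri--Friedlander--Iwaniec's method in Section~2.

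Measured against that sketch, your outline captures the right strategy: Heath-Brown decomposition into Type~I/II bilinear sums, the dispersion method, exploitation of the well-factorable splitting of $q$, and the Deshouillers--Iwaniec spectral bounds for sums of Kloosterman sums. One concrete slip: your displayed dispersion sum has both pairs $n_1,n_2$ and $m_1,m_2$, which is inconsistent with the Cauchy--Schwarz step you describe (``in $n$ to square out $\beta_n$'' would leave a single $n$ and produce $m_1,m_2$ only). In the version the paper tracks, the Cauchy--Schwarz is taken over the $m$ and the \emph{outer} modulus-part $q$, keeping the inner modulus-factor and $n$ inside the square. This produces a sum over $n_1,n_2$ with the crucial congruence $n_1\equiv n_2\Mod{q}$ and a single $m$-sum which is then Poisson-expanded; the congruence $n_1\equiv n_2\Mod{q}$ (rather than $m_1n_1\equiv m_2n_2\Mod{q}$) is what permits the substitution $n_1-n_2=qf$, trading the $q$-sum for an $f$-sum, and is essential for landing in the shape required by Lemma~\ref{lmm:DeshouillersIwaniec}. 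Your outline as written would not reach that shape. Finally, be aware that a fully rigorous treatment requires the technical apparatus the paper inherits from \cite{May1} and \cite{BFI1}: removing small or square-full moduli, maintaining the Siegel--Walfisz property through the decomposition, separating variables from inequalities, and bounding diagonal contributions; these are where most of the genuine work lies, and a proposal at this level of generality would need to spell them out before it constitutes a proof.
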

%
%
%
%
%%%%%%%%%%%%%%%%%%%%%%%%%%%%%%%%%%%%%%%%%%%%%%%%%%%%%%%%%%
%
%
%
%
 In this paper we consider weights satisfying a slightly stronger condition of being `triply well factorable'. For these weights we can improve on the range of moduli.
%
%
%
%
%%%%%%%%%%%%%%%%%%%%%%%%%%%%%%%%%%%%%%%%%%%%%%%%%%%%%%%%%%
%
%
%
%
\begin{dfntn}[Triply well factorable]\label{dfntn:WellFactorable}
Let $Q\in\mathbb{R}$. We say a sequence $\lambda_q$ is \textbf{triply well factorable of level $Q$} if, for any choice of factorization $Q=Q_1Q_2Q_3$ with $Q_1,Q_2,Q_3\ge 1$, there exist three sequences $\gamma^{(1)}_{q_1},\gamma^{(2)}_{q_2},\gamma^{(3)}_{q_3}$ such that:
\begin{enumerate}
\item $|\gamma^{(1)}_{q_1}|,|\gamma^{(2)}_{q_2}|,|\gamma^{(3)}_{q_3}|\le 1$ for all $q_1,q_2,q_3$.
\item $\gamma^{(i)}_{q}$ is supported on $1\le q\le Q_i$ for $i\in\{1,2,3\}$.
\item We have
\[
\lambda_q=\sum_{q=q_1q_2q_3}\gamma^{(1)}_{q_1}\gamma^{(2)}_{q_2}\gamma^{(3)}_{q_3}.
\]
\end{enumerate}
\end{dfntn}
%
%
%
%
%%%%%%%%%%%%%%%%%%%%%%%%%%%%%%%%%%%%%%%%%%%%%%%%%%%%%%%%%%
%
%
%
%
With this definition, we are able to state our main result.
%
%
%
%
%%%%%%%%%%%%%%%%%%%%%%%%%%%%%%%%%%%%%%%%%%%%%%%%%%%%%%%%%%
%
%
%
%
\begin{thrm}\label{thrm:Factorable}
Let $a\in\mathbb{Z}$ and $A,\epsilon>0$. Let $\lambda_q$ be triply well factorable of level $Q\le x^{3/5-\epsilon}$. Then we have
\[
\sum_{\substack{q\le Q\\ (a,q)=1}}\lambda_q\Bigl(\pi(x;q,a)-\frac{\pi(x)}{\phi(q)}\Bigr)\ll_{a,A,\epsilon} \frac{x}{(\log{x})^A}.
\]
\end{thrm}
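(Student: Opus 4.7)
My plan is to follow the Bombieri--Friedlander--Iwaniec dispersion strategy that underlies Theorem~\ref{nthrm:BFI1}, exploiting the extra flexibility provided by the third factor in the triply factorable hypothesis to push the level of distribution from $4/7$ up to $3/5$.

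\textbf{Combinatorial reduction.} First reduce from $\pi(x;q,a)$ to the corresponding sum over $\psi(x;q,a)$ by partial summation and a standard treatment of prime powers. Then apply a Heath--Brown identity of suitable depth $K=K(\epsilon)$ to $\Lambda(n)\mathbf{1}_{n\sim x}$, followed by dyadic decomposition of every resulting variable. This expresses the quantity to be bounded as a bounded number of averages
\[
\sum_{q\le Q}\lambda_q\Bigl(\sum_{\substack{n_1\cdots n_K\equiv a\Mod{q}\\ n_i\sim N_i}}\prod_i\alpha^{(i)}_{n_i}-\text{main term}\Bigr),
\]
with $\prod_i N_i\asymp x$ and each $\alpha^{(i)}$ either a smooth coefficient or an arbitrary bounded sequence.

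\textbf{Type I / Type II classification.} Depending on the sizes $(N_1,\dots,N_K)$, each configuration falls into a Type~I regime (all but one variable is smooth, or all ``rough'' variables are short) or a Type~II regime (the variables partition into two balanced blocks of scale $\asymp x^{1/2}$). For the Type~I cases I would apply Poisson summation in the smooth variables, using the factorization $q=q_1q_2q_3$ to split the Poisson duals cleanly: $q_1$ (or $q_1q_2$) carries out the Poisson inversion, while the remaining factor(s) furnish enough additional averaging to beat the main term for $Q\le x^{3/5-\epsilon}$, potentially invoking the author's companion paper~I as a black-box Type~I input.

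\textbf{Type II (dispersion).} The decisive case is the balanced bilinear range $\alpha*\beta$ with $\alpha,\beta$ of scale $\asymp x^{1/2}$, the very range which forces the $4/7$ barrier in BFI. Here I would choose $q=q_1q_2q_3$ with sizes $Q_1,Q_2,Q_3$ to be optimised, first Cauchy--Schwarzing over $q_3$ together with one of the dyadic variables to extract an additional bilinear structure, and only then running the classical BFI dispersion argument modulo $q_1q_2$. After opening the square and applying twisted Poisson summation, the inner sums become incomplete sums of Kloosterman-type expressions such as $\Kl_2$ and, thanks to the extra averaging over $q_3$, hyper-Kloosterman sums; Deligne's Weil-type bounds supply square-root cancellation. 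Balancing $(Q_1,Q_2,Q_3)$ against the dyadic parameters $(M,N)$ should then close the argument up to $Q\le x^{3/5-\epsilon}$.

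\textbf{Main obstacle.} The hard step is the bilinear estimate in the Type~II regime. Simply having a third factor is not enough: one must arrange the applications of Cauchy--Schwarz and Poisson summation so that $q_3$ translates into a genuine additional gain in the final exponential sum, rather than being absorbed by the diagonal or lost to off-diagonal error. Verifying the algebraic-geometric hypotheses underpinning the relevant Deligne-type bounds (non-degeneracy of the monodromy, control of the exceptional locus, and disposal of degenerate frequency configurations), uniformly across the ranges imposed by the target exponent $3/5$, is where I expect the bulk of the technical work to lie.
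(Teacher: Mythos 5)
Your combinatorial skeleton (Heath--Brown decomposition, classification into Type~I/II, dispersion with an extra Cauchy--Schwarz exploiting the third factor) matches the paper's plan at a high level, but you have misidentified the crucial analytic input in the Type~II regime, and this is not a cosmetic point. The paper does \emph{not} close the bilinear estimate with Deligne/Weil-type bounds for Kloosterman or hyper-Kloosterman sums. After the final Cauchy--Schwarz the relevant object is a sum of the shape $\sum_{r,s,n} b_{n,r,s}\sum_{c,d} g(c,d)\,e\bigl(n\overline{dr}/(cs)\bigr)$, and this is estimated via the Deshouillers--Iwaniec bound (Lemma~\ref{lmm:DeshouillersIwaniec} here, Theorem~12 of \cite{DeshouillersIwaniec}), which comes from the spectral theory of automorphic forms and the Kuznetsov trace formula. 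That estimate captures cancellation in the sum over moduli that square-root cancellation in individual Kloosterman sums cannot reach; it is precisely this extra spectral saving, applied at the reduced level that the third factor makes available, that pushes the exponent from $4/7$ to $3/5$. If you replace it by pointwise Weil/Deligne bounds you will not clear the off-diagonal at $Q=x^{3/5-\epsilon}$, and your ``main obstacle'' paragraph about verifying monodromy/non-degeneracy hypotheses for $\ell$-adic sheaves is attacking a problem that does not arise in this argument. (Weil for Kloosterman sums does appear, but only in the much easier Proposition~\ref{prpstn:DoubleDivisor} handling the residual range with two large smooth variables.)

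A second, smaller inaccuracy: the decisive Type~II regime here is not the balanced $N\asymp M\asymp x^{1/2}$ block. Proposition~\ref{prpstn:WellFactorable} operates for $x^\epsilon\le N\le x^{2/5}$, with the critical window being roughly $x^{1/10}\le N\le x^{1/3}$; the truly balanced configuration never occurs with rough coefficients after a depth-$3$ Heath--Brown identity (any configuration with no variable or sub-product in $[x^\epsilon,x^{2/5}]$ forces two smooth factors, which is the double-divisor case). Finally, the mechanism by which the third factor ``translates into a genuine additional gain'' is concrete in the paper and worth internalizing: one writes the triply factored modulus as $q\cdot d\cdot st$, carries $s_1$ (one copy of the middle factor) \emph{outside} the last Cauchy--Schwarz, thereby enlarging the diagonal by a factor $S$ but dropping the level in the Deshouillers--Iwaniec input from $R^2$ down to $T^2$; this trade-off is then optimized over $(Q_1,Q_2,Q_3)$.
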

%
%
%
%
%%%%%%%%%%%%%%%%%%%%%%%%%%%%%%%%%%%%%%%%%%%%%%%%%%%%%%%%%%
%
%
%
%
The main point of this theorem is the quantitative improvement over Theorem \ref{nthrm:BFI1} allowing us to handle moduli as large as $x^{3/5-\epsilon}$ (instead of $x^{4/7-\epsilon}$). Theorem \ref{thrm:Factorable} has the disadvantage that it has a stronger requirement that the weights be triply well factorable rather than merely well-factorable, but we expect that Theorem \ref{thrm:Factorable} (or the ideas underlying it) will enable us to obtain quantitative improvements to several problems in analytic number theory where the best estimates currently rely on Theorem \ref{nthrm:BFI1}.

It appears that handling moduli of size $x^{3/5-\epsilon}$ is the limit of the current method. In particular, there appears to be no further benefit of imposing stronger constraints on the coefficients such as being `quadruply well factorable'.

As mentioned above, the main applications of such results come when using sieves. Standard sieve weights are not well-factorable (and so not triply well factorable), but Iwaniec \cite{IwaniecFactorable} showed that a slight variant of the upper bound $\beta$-sieve weights of level $D$ (which produces essentially identical results to the standard $\beta$-sieve weights) is a linear combination of sequences which are well-factorable of level $D$ provided $\beta\ge 1$. In particular, Theorem \ref{nthrm:BFI1} applies to the factorable variant of the upper bound sieve weights for the linear ($\beta=1$) sieve, for example. 

The factorable variant of the $\beta$-sieve weights of level $D$ are a linear combination of triply well factorable sequences of level $D$ provided $\beta\ge 2$, and so Theorem \ref{thrm:Factorable} automatically applies to these weights. Unfortunately it is the linear ($\beta=1$) sieve weights which are most important for many applications, and these are not triply well factorable of level $D$ (despite essentially being well-factorable). Despite this, the linear sieve weights have good factorization properties, and turns out that linear sieve weights of level $x^{7/12}$ are very close to being triply well factorable of level $x^{3/5}$. In particular, we have the following result.
%
%
%
%
%%%%%%%%%%%%%%%%%%%%%%%%%%%%%%%%%%%%%%%%%%%%%%%%%%%%%%%%%%
%
%
%
%
\begin{thrm}\label{thrm:Linear}
Let $a\in\mathbb{Z}$ and $A,\epsilon>0$. Let $\lambda^+_d$ be the well-factorable upper bound sieve weights for the linear sieve of level $D\le x^{7/12-\epsilon}$. Then we have
\[
\sum_{\substack{q\le x^{7/12-\epsilon}\\ (q,a)=1} }\lambda_q^+\Bigl(\pi(x;q,a)-\frac{\pi(x)}{\phi(q)}\Bigr)\ll_{a,A,\epsilon} \frac{x}{(\log{x})^A}.
\]
\end{thrm}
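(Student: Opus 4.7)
The plan is to reduce Theorem \ref{thrm:Linear} to Theorem \ref{thrm:Factorable}. Specifically, I aim to decompose the well-factorable upper-bound linear sieve weights $\lambda^+_d$ at level $D = x^{7/12-\epsilon}$ as a bounded linear combination of $(\log x)^{O(1)}$ sequences, each triply well-factorable of level $Q = x^{3/5-\epsilon/2}$. Since $Q/D = x^{1/60 + \epsilon/2}$, there is a polynomial surplus between the two levels, and this surplus is exactly what accommodates the extra flexibility that triple factorability demands.

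I would first recall Iwaniec's description of the Rosser--Iwaniec weights: $\lambda^+_d$ is supported on squarefree $d = p_1 p_2 \cdots p_r$ with $p_1 > p_2 > \cdots > p_r$ satisfying the standard linear sieve inequalities on all partial products $p_1 p_2 \cdots p_{2j+1}$ in terms of $D$. After a dyadic partition of each prime into a range $p_i \in [P_i, 2P_i]$, the problem reduces to treating $(\log x)^{O(r)} = x^{o(1)}$ profile-indexed subsequences separately, at the cost of only polylogarithmic factors.

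The key step is to show, for each surviving profile $(P_1, \ldots, P_r)$ and each target factorization $Q = Q_1 Q_2 Q_3$ prescribed by Definition \ref{dfntn:WellFactorable}, that the index set $\{1, \ldots, r\}$ can be partitioned into three bins $B_1, B_2, B_3$ with $\prod_{i \in B_j} (2P_i) \le Q_j$ for each $j$. A first-fit-decreasing greedy bin-packing succeeds whenever $\max_i P_i$ does not exceed $\min_j Q_j$, in which case the induced factorization $d = d_1 d_2 d_3$ with $d_j = \prod_{i \in B_j} p_i$ produces the desired triply well-factorable structure. The main obstacle is the rigid regime where some $P_{i_0}$ is comparable to $D$ and thus exceeds two of the $Q_j$: then $p_{i_0}$ is forced alone into the largest bin, and one must verify that the residual product $\prod_{i \ne i_0} p_i$ still fits into the remaining two. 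Here the linear sieve inequalities are crucial: they control $\prod_{i \ne i_0} P_i$ much more tightly than the trivial $D/P_{i_0}$, and combined with the slack $Q/D = x^{1/60}$ they should yield the two-bin packing. A careful case analysis along these lines should confirm that a valid three-bin assignment always exists for profiles appearing in the support.

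Once this combinatorial lemma is in hand, every dyadic piece of $\lambda^+_d$ is triply well-factorable of level $Q \le x^{3/5-\epsilon/2}$. Applying Theorem \ref{thrm:Factorable} to each of the $(\log x)^{O(1)}$ pieces with parameter $A+C$ for $C$ large, and summing, gives the bound $\ll x/(\log x)^A$ required by Theorem \ref{thrm:Linear}.
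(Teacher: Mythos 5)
Your proposal hinges on the claim that the well-factorable linear sieve weights of level $D = x^{7/12-\epsilon}$ are triply well-factorable of level $Q = x^{3/5-\epsilon/2}$, so that Theorem~\ref{thrm:Factorable} applies directly. This claim is false, and indeed the paper explicitly flags it: the linear sieve weights ``are not triply well factorable of level $D$ (despite essentially being well-factorable).'' Here is a concrete counterexample to your bin-packing strategy. The support set $\mathcal{D}^+(D)$ contains products $d = p_1p_2$ with $p_1, p_2$ close to $D^{1/3}$ (the condition for $r=2$ is just $p_1^3 \le D$). Take the factorization $Q = Q_1Q_2Q_3$ with $Q_1 = Q^{1/2}$ and $Q_2 = Q_3 = Q^{1/4}$. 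Then $Q_2 = Q_3 \approx x^{3/20}$ while $p_1, p_2 \approx D^{1/3} \approx x^{7/36}$, and $7/36 > 3/20$, so neither prime fits in bin $2$ or bin $3$. Both primes must then land in bin $1$, requiring $Q_1 \ge p_1p_2 \approx D^{2/3} \approx x^{7/18}$; but $Q_1 \approx x^{3/10}$ and $7/18 > 3/10$. No valid assignment exists, so this $d$ kills triple well-factorability at level $x^{3/5}$. More generally, handling $d = p_1p_2$ with $p_1 \approx p_2 \approx D^{1/3}$ for every choice of $(Q_1,Q_2,Q_3)$ forces $Q \gtrsim D^{4/3} \approx x^{7/9}$, far above $x^{3/5}$. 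Your observation that the linear sieve inequalities constrain the primes does not rescue this: those inequalities still permit the extremal configuration $p_1 \approx p_2 \approx D^{1/3}$.

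The correct route, which the paper takes, does \emph{not} go through Theorem~\ref{thrm:Factorable} at all. Instead one works directly with Proposition~\ref{prpstn:MainProp}, whose constraints on the three factor-support sizes $(D_1,D_2,D_3)$ are asymmetric and depend on the bilinear scale $N$ (namely $D_1 \le N/x^\delta$, $N^2 D_2 D_3^2 \le x^{1-\delta}$, $N^2 D_1 D_2^4 D_3^3 \le x^{2-\delta}$, $N D_1 D_2^5 D_3^2 \le x^{2-\delta}$), rather than coinciding with an arbitrary factorization of a fixed $Q$. This extra flexibility is precisely what absorbs the rigid configurations above: one then proves a bespoke combinatorial factorization result (Proposition~\ref{prpstn:Factorization}) showing that every $d \in \mathcal{D}^+(x^{7/12-50\delta})$ admits a factorization $d = d_1d_2d_3$ meeting these $N$-dependent constraints for each relevant $N \in [x^{2\delta}, x^{1/3+\delta/2}]$, splitting into cases according to the sizes of $p_1, p_2p_3, p_1p_4$ relative to $D^2/x^{1-3\delta}$. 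To repair your argument you would need to abandon the reduction to the black-box Theorem~\ref{thrm:Factorable} and instead verify these $N$-dependent inequalities directly, as in Proposition~\ref{prpstn:Factorization}.
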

%
%
%
%
%%%%%%%%%%%%%%%%%%%%%%%%%%%%%%%%%%%%%%%%%%%%%%%%%%%%%%%%%%
%
%
%
%
This enables us to get good savings for the error term weighted by the linear sieve for larger moduli than was previously known. In particular, Theorem \ref{thrm:Linear} extends the range of moduli we are able to handle from the from the Bombieri-Friedlander-Iwaniec result \cite[Theorem 10]{BFI1} handling moduli of size $x^{4/7-\epsilon}$ to dealing with moduli of size $x^{7/12-\epsilon}$.

It is likely Theorem \ref{thrm:Linear} directly improves several results based on sieves. It doesn't directly improve upon estimates such as the upper bound for the number of twin primes, but we expect the underlying methods to give a suitable improvement for several such applications when combined with technique such as Chen's switching principle or Harman's sieve (see \cite{Harman,Chen,FouvryGrupp,FouvryGrupp2,Wu}). We intend to address this and related results in future work. Moreover, we expect that there are other upper bound sieves closely related to the linear sieve which are much closer to triply well factorable, and so we expect technical variants of Theorem \ref{thrm:Linear} adapted to these sieve weights to give additional improvements.
%
%
%
%%%%%%%%%%%%%%%%%%%%%%%%%%%%%%%%%%%%%%%%%%%%%%%%%%%%%%%%%%
%
%
%
\begin{rmk}
Drappeau \cite{Drappeau} proved equidistribution for smooth numbers in arithmetic progressions to moduli $x^{3/5-\epsilon}$. The advantage of flexible factorizations of smooth numbers allows one to use the most efficient estimates on convolutions (but one has to overcome additional difficulties in secondary main terms). Since our work essentially reduces to the original estimates of Bombieri--Friedlander--Iwaniec in these cases, it provides no benefit in this setting, but partially explains why we have the same limitiation of $x^{3/5-\epsilon}$.
\end{rmk}
%
%
%
%
%%%%%%%%%%%%%%%%%%%%%%%%%%%%%%%%%%%%%%%%%%%%%%%%%%%%%%%%%%
%
%
%
%
\section{Proof outline}
%
%
%
%
%%%%%%%%%%%%%%%%%%%%%%%%%%%%%%%%%%%%%%%%%%%%%%%%%%%%%%%%%%
%
%
%
%
The proof of Theorem \ref{thrm:Factorable} is a generalization of the method used to prove Theorem \ref{nthrm:BFI1}, and essentially includes the proof of Theorem \ref{nthrm:BFI1} as a special case. As with previous approaches, we use a combinatorial decomposition for the primes (Heath-Brown's identity) to reduce the problem to estimating bilinear quantities in arithmetic progressions. By Fourier expansion and several intermediate manipulations this reduces to estimating certain multidimensional exponential sums, which are ultimately bounded using the work of Deshouillers-Iwaniec \cite{DeshouillersIwaniec} coming from the spectral theory of automorphic forms via the Kuznetsov trace formula.

To obtain an improvement over the previous works we exploit the additional flexibility of factorizations of the moduli to benefit from the fact that now the weights can be factored into three pieces rather than two. This gives us enough room to balance the sums appearing from diagonal and off-diagonal terms perfectly in a wide range.

More specifically, let us recall the main ideas behind Theorem \ref{nthrm:BFI1}. A combinatorial decomposition leaves us to estimate for various ranges of $N,M,Q,R$
\[
\sum_{q\sim Q}\gamma_q \sum_{r\sim R}\lambda_r \sum_{m\sim M}\beta_m\sum_{n\sim N}\alpha_n\Bigl(\mathbf{1}_{n m\equiv a\Mod{qr}}-\frac{\mathbf{1}_{(n m,qr)=1}}{\phi(qr)}\Bigr),
\]
for essentially arbitrary 1-bounded sequences $\gamma_q,\lambda_r,\beta_m,\alpha_n$. Applying Cauchy-Schwarz in the $m$, $q$ variables and then Fourier expanding the $m$-summation and using Bezout's identity reduces this to bounding something like
\[
\sum_{q\sim Q}\sum_{\substack{n_1,n_2\sim N\\ n_1\equiv n_2\Mod{q}}}\alpha_{n_1}\overline{\alpha_{n_2}}\sum_{r_1,r_2\sim R}\lambda_{r_1}\overline{\lambda_{r_2}}\sum_{h\sim H}e\Bigl(\frac{ah \overline{n_2 q r_1}(n_1-n_2)}{n_1r_2}\Bigr),
\]
where $H\approx NQR^2/x$. Writing $n_1-n_2=qf$ and switching the $q$-summation to an $f$-summation, then applying Cauchy-Schwarz in the $n_1,n_2,f,r_2$ variables leaves us to bound
\[
\sum_{f\sim N/Q}\sum_{\substack{n_1,n_2\sim N\\ n_1\equiv n_2\Mod{f}}}\sum_{r_2\sim R}\Bigl|\sum_{r_1\sim R}\gamma_{r_1}\sum_{h\sim H}e\Bigl(\frac{a h f \overline{r_1 n_2}}{n_1r_2}\Bigr)\Bigr|^2.
\]
Bombieri-Friedlander-Iwaniec then drop the congruence condition on $n_1,n_2$, combine $n_1,r_2$ into a new variable $c$ and then estimate the resulting exponential sums via the bounds of Deshouillers-Iwaniec. This involves applying the Kuznetsov trace formula for the congruence subgroup $\Gamma_0(r_1r_1')$. This is a large level (of size $R^2$), which means that the resulting bounds deteriorate rapidly with $R$. We make use of the fact that if the moduli factorize suitably, then we can reduce this level at the cost of worsening the diagonal terms slightly. 

In particular, if the above $\lambda_r$ coefficients were of the form $\kappa_s\star\nu_t$, then instead we could apply the final Cauchy-Schwarz in $f$, $n_1$, $n_2$, $r_2$ and $s_1$, leaving us instead to bound
\[
\sum_{f\sim N/Q}\sum_{\substack{n_1,n_2\sim N\\ n_1\equiv n_2\Mod{f}}}\sum_{r_2\sim R}\sum_{s_1\sim S}\Bigl|\sum_{t_1\sim T}\nu_{t_1}\sum_{h\sim H}e\Bigl(\frac{a h f \overline{s_1 t_1 n_2}}{n_1r_2}\Bigr)\Bigr|^2.
\]
(Here $ST\approx R$). Here we have increased the diagonal contribution by a factor $S$, but now the level of the relevant congruence subgroup has dropped from $R^2$ to $T^2$. By dropping the congruence condition, combining $c=n_1r_2$ and $d=n_2s_1$, we can then apply the Deshouillers-Iwaniec estimates in a more efficient manner, giving an additional saving over the previous approach in all the important regimes. This ultimately allows us to handle moduli as large as $x^{3/5-\epsilon}$ in Theorem \ref{thrm:Factorable}. On its own this approach doesn't quite cover all relevant ranges for $N$, but combining it with known estimates for the divisor function in arithmetic progressions (based on the Weil bound) allows us to cover the remaining ranges.

We view the main interest of Theorem \ref{thrm:Factorable} and Theorem \ref{thrm:Linear} as their applicability to sieve problems. It is therefore unfortunate that Theorem \ref{thrm:Factorable} doesn't apply directly to the (well-factorable variant of the) linear sieve weights. To overcome this limitation, it is therefore necessary for us to exploit the fact that our main technical result on convolutions (Proposition \ref{prpstn:MainProp}) actually gives a stronger estimate than what is captured by Theorem \ref{thrm:Factorable}. Moreover, it is necessary to study the precise construction of the linear sieve weights to show that they enjoy good factorization properties. Indeed, we recall the support set for the upper bound linear sieve weights of level $D$ is 
\[
\mathcal{D}^+(D)=\Bigl\{p_1\cdots p_r:\, p_1\ge p_2\ge \dots \ge p_r,\,\,p_1\cdots p_{2j}p_{2j+1}^3\le D\text{ for $0\le j<r/2$}\Bigr\}.
\]
If $p_1\cdots p_r$ is close to $D$, we must have the most of the $p_i$'s are very small, and so the weights are supported on very well factorable numbers. It is only really the largest few prime factors $p_i$ which obstruct finding factors in given ranges, and so by explicitly handling them we can exploit this structure much more fully.

Proposition \ref{prpstn:Factorization} is a technical combinatorial proposition showing that the the linear sieve weights enjoy rather stronger factorization properties that simply what is captured through being well-factorable. Although these are not sufficient for triple well-factorability, they are sufficient for our more technical conditions coming from Proposition \ref{prpstn:MainProp}. This ultimately leads to Theorem \ref{thrm:Linear}.
%
%
%
%
%%%%%%%%%%%%%%%%%%%%%%%%%%%%%%%%%%%%%%%%%%%%%%%%%%%%%%%%%%
%
%
%
%
\section{Acknowledgements}
%
%
%
%
%%%%%%%%%%%%%%%%%%%%%%%%%%%%%%%%%%%%%%%%%%%%%%%%%%%%%%%%%%
%
%
%
%
I would like to thank John Friedlander, Ben Green, Henryk Iwaniec and Kyle Pratt for useful discussions and suggestions. JM is supported by a Royal Society Wolfson Merit Award, and this project has received funding from the European Research Council (ERC) under the European Union’s Horizon 2020 research and innovation programme (grant agreement No 851318).
%
%
%
%
%%%%%%%%%%%%%%%%%%%%%%%%%%%%%%%%%%%%%%%%%%%%%%%%%%%%%%%%%%
%
%
%
%
\section{Notation}
%
%
%
%
%%%%%%%%%%%%%%%%%%%%%%%%%%%%%%%%%%%%%%%%%%%%%%%%%%%%%%%%%%
%
%
%
%
We will use the Vinogradov $\ll$ and $\gg$ asymptotic notation, and the big oh $O(\cdot)$ and $o(\cdot)$ asymptotic notation. $f\asymp g$ will denote the conditions $f\ll g$ and $g\ll f$ both hold. Dependence on a parameter will be denoted by a subscript. We will view $a$ (the residue class $\Mod{q}$) as a fixed positive integer throughout the paper, and any constants implied by asymptotic notation will be allowed to depend on $a$ from this point onwards. Similarly, throughout the paper, we will let $\epsilon$ be a single fixed small real number; $\epsilon=10^{-100}$ would probably suffice. Any bounds in our asymptotic notation will also be allowed to depend on $\epsilon$.

The letter $p$ will always be reserved to denote a prime number. We use $\phi$ to denote the Euler totient function, $e(x):=e^{2\pi i x}$ the complex exponential, $\tau_k(n)$ the $k$-fold divisor function, $\mu(n)$ the M\"obius function. We let $P^-(n)$, $P^+(n)$ denote the smallest and largest prime factors of $n$ respectively, and $\hat{f}$ denote the Fourier transform of $f$ over $\mathbb{R}$ - i.e. $\hat{f}(\xi)=\int_{-\infty}^{\infty}f(t)e(-\xi t)dt$. We use $\mathbf{1}$ to denote the indicator function of a statement. For example,
\[
\mathbf{1}_{n\equiv a\Mod{q}}=\begin{cases}1,\qquad &\text{if }n\equiv a\Mod{q},\\
0,&\text{otherwise}.
\end{cases}
\]
For $(n,q)=1$, we will use $\overline{n}$ to denote the inverse of the integer $n$ modulo $q$; the modulus will be clear from the context. For example, we may write $e(a\overline{n}/q)$ - here $\overline{n}$ is interpreted as the integer $m\in \{0,\dots,q-1\}$ such that $m n\equiv 1\Mod{q}$. Occasionally we will also use $\overline{\lambda}$ to denote complex conjugation; the distinction of the usage should be clear from the context.  For a complex sequence $\alpha_{n_1,\dots,n_k}$, $\|\alpha\|_2$ will denote the $\ell^2$ norm $\|\alpha\|_2=(\sum_{n_1,\dots,n_k}|\alpha_{n_1,\dots,n_k}|^2)^{1/2}$.

Summations assumed to be over all positive integers unless noted otherwise. We use the notation $n\sim N$ to denote the conditions $N<n\le 2N$.

We will let $z_0:=x^{1/(\log\log{x})^3}$ and $y_0:=x^{1/\log\log{x}}$ two parameters depending on $x$, which we will think of as a large quantity. We will let $\psi_0:\mathbb{R}\rightarrow\mathbb{R}$ denote a fixed smooth function supported on $[1/2,5/2]$ which is identically equal to $1$ on the interval $[1,2]$ and satisfies the derivative bounds $\|\psi_0^{(j)}\|_\infty\ll (4^j j!)^2$ for all $j\ge 0$. (See \cite[Page 368, Corollary]{BFI2} for the construction of such a function.)

We will repeatedly make use of the following condition.
\begin{dfntn}[Siegel-Walfisz condition]
We say that a complex sequence $\alpha_n$ satisfies the \textbf{Siegel-Walfisz condition} if for every $d\ge 1$, $q\ge 1$ and $(a,q)=1$ and every $A>1$ we have
\begin{equation}
\Bigl|\sum_{\substack{n\sim N\\ n\equiv a\Mod{q}\\ (n,d)=1}}\alpha_n-\frac{1}{\phi(q)}\sum_{\substack{n\sim N\\ (n,d q)=1}}\alpha_n\Bigr|\ll_A \frac{N\tau(d)^{O(1)}}{(\log{N})^A}.
\label{eq:SiegelWalfisz}
\end{equation}
\end{dfntn}
We note that $\alpha_n$ satisfies the Siegel-Walfisz condition if $\alpha_n=1$ or if $\alpha_n=\mu(n)$.
%
%
%
%
%%%%%%%%%%%%%%%%%%%%%%%%%%%%%%%%%%%%%%%%%%%%%%%%%%%%%%%%%%
%
%
%
%
\section{Proof of Theorem \ref{thrm:Factorable}}\label{sec:Factorable}
%
%
%
%
%%%%%%%%%%%%%%%%%%%%%%%%%%%%%%%%%%%%%%%%%%%%%%%%%%%%%%%%%%
%
%
%
%
In this section we establish Theorem \ref{thrm:Factorable} assuming two propositions, namely Proposition \ref{prpstn:WellFactorable} and Proposition \ref{prpstn:DoubleDivisor}, given below.
%
%
%
%
%%%%%%%%%%%%%%%%%%%%%%%%%%%%%%%%%%%%%%%%%%%%%%%%%%%%%%%%%%
%
%
%
%
\begin{prpstn}[Well-factorable Type II estimate]\label{prpstn:WellFactorable}
Let $\lambda_q$ be triply well factorable of level $Q\le x^{3/5-10\epsilon}$, let $NM\asymp x$ with
\[
x^\epsilon\le N\le x^{2/5}.
\]
Let $\alpha_n,\beta_m$ be complex sequences such that $|\alpha_n|,|\beta_n|\le \tau(n)^{B_0}$ and $\alpha_n$ satisfies the Siegel-Walfisz condition \eqref{eq:SiegelWalfisz} and is supported on $P^-(n)\ge z_0$. Then we have that for every choice of $A>0$ and every interval $\mathcal{I}\subseteq[x,2x]$
\[
\sum_{q\le Q}\lambda_q\sum_{n\sim N}\alpha_n\sum_{\substack{m\sim M\\ mn\in\mathcal{I}}}\beta_m\Bigl(\mathbf{1}_{nm\equiv a\Mod{q}}-\frac{\mathbf{1}_{(nm,q)=1}}{\phi(q)}\Bigr)\ll_{A,B_0}\frac{x}{(\log{x})^A}.
\]
\end{prpstn}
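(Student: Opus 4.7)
The plan is to adapt the Bombieri--Friedlander--Iwaniec dispersion argument (the method of Theorem \ref{nthrm:BFI1}) to the triply factorable setting, following the blueprint in the proof outline. The improvement over the doubly factorable case comes from using the third factor of $\lambda_q$ to perform an extra Cauchy--Schwarz that reduces the level of the modular forms appearing in the Deshouillers--Iwaniec bound. I fix parameters $Q_1, Q_2, Q_3$ with $Q_1 Q_2 Q_3 = Q$ (to be optimised only at the end) and use Definition \ref{dfntn:WellFactorable} to write
\[
\lambda_q = \sum_{q = q_1 q_2 q_3}\gamma^{(1)}_{q_1}\gamma^{(2)}_{q_2}\gamma^{(3)}_{q_3},\qquad q_i \sim Q_i,
\]
after smooth dyadic localisation of each $q_i$ and of $m \sim M$. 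The coefficient $\gamma^{(1)}$ plays the role of the outer smooth modulus weight $\gamma_q$ in the BFI outline, while the combined $\gamma^{(2)}\star\gamma^{(3)}$ plays the role of the factorable inner coefficient $\lambda_r = \kappa_s\star\nu_t$.

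I first apply Cauchy--Schwarz in the variables $(m, q_1)$, squaring out $\beta_m$ and $\gamma^{(1)}_{q_1}$. After expanding the square over pairs $(q_2, q_3)$ and their duals $(q_2', q_3')$, subtracting the model term $\mathbf{1}_{(nm,q)=1}/\phi(q)$, applying Poisson summation to the smooth $m$-sum, writing $n_1 - n_2 = q_1 f$ with $f \sim N/Q_1$, and using Bezout's identity to separate the two congruence conditions, we reduce to bounding a sum essentially of the form
\[
\sum_{q_1,f,n_1,n_2}\alpha_{n_1}\overline{\alpha_{n_2}}\sum_{q_2,q_2',q_3,q_3'}\gamma^{(2)}_{q_2}\overline{\gamma^{(2)}_{q_2'}}\gamma^{(3)}_{q_3}\overline{\gamma^{(3)}_{q_3'}}\sum_{0<|h|\ll H} e\Bigl(\frac{a h f\,\overline{q_1 q_2 q_3 n_2}}{n_1 q_2' q_3'}\Bigr),
\]
with dual length $H \asymp N Q Q_2 Q_3 / x$. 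The $h=0$ Poisson contribution cancels the model main term up to an error controlled by the Siegel--Walfisz hypothesis on $\alpha_n$.

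I then apply a second Cauchy--Schwarz, this time in the variables $(f, n_1, n_2, q_3', q_2', q_2)$, with the outer factor $q_2$ playing the level-reduction role of $s_1$ in the outline. This squares out $\gamma^{(2)}_{q_2}$ and leaves an inner bilinear form in $(q_3, h)$ with coefficients $\gamma^{(3)}$. After combining $c = n_1 q_3'$ and $d = n_2 q_3$ (and the analogous duals), and verifying the coprimality and support conditions, the resulting sum of Kloosterman-type phases falls exactly into the range of the Deshouillers--Iwaniec theorem via the Kuznetsov trace formula on $\Gamma_0(\cdot)$ at level $Q_3^2$ rather than $(Q_2 Q_3)^2$. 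The gain of a factor of $Q_2^2$ in the level, set against a loss of $Q_2$ in the diagonal from the extra Cauchy--Schwarz variable, yields a net saving of $Q_2$. Optimising $Q_1, Q_2, Q_3$ then delivers the admissible range $Q \le x^{3/5-10\epsilon}$ throughout $x^\epsilon \le N \le x^{2/5}$.

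The main obstacle I expect is the precise verification, after the second Cauchy--Schwarz, that the exponential sum fits the hypotheses of the Deshouillers--Iwaniec bound --- in particular the coprimality of the combined $(c, d)$ with the moduli, the smooth-support conditions, and the extraction of acceptable bounds \emph{uniformly} in all sub-ranges of the six parameters $N, M, H, Q_1, Q_2, Q_3$. A secondary difficulty is the systematic tracking of the various secondary main terms generated along the way --- the $h=0$ Poisson frequency, the diagonals $q_2 = q_2'$ and $q_3 = q_3'$ from the first Cauchy--Schwarz, and the cross terms between the dispersion model and the true main term --- each of which must be shown to cancel or be absorbed via the Siegel--Walfisz hypothesis on $\alpha_n$ together with the smoothness of the $\gamma^{(i)}$.
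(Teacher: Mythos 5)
Your plan follows the same route as the paper: triple factorization of $\lambda_q$, a first Cauchy--Schwarz in $(m,q_1)$, Poisson summation and Bezout, the substitution $n_1-n_2 = q_1 f$, a second Cauchy--Schwarz that keeps only one remaining factor inside the absolute value to reduce the Kuznetsov level, and finally the Deshouillers--Iwaniec bound. This is precisely the strategy of Proposition \ref{prpstn:MainProp} and Lemma \ref{lmm:Factorable} (plus the reduction steps via Lemmas \ref{lmm:Divisor}, \ref{lmm:Separation}, \ref{lmm:SiegelWalfiszMaintain}, \ref{lmm:RoughModuli} and Proposition \ref{prpstn:GeneralDispersion}), so the approach matches.

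Two things are off in the details, though. First, the variable grouping before Deshouillers--Iwaniec is not $c=n_1 q_3'$, $d=n_2 q_3$. After the second Cauchy--Schwarz the outside variables are $f,n_1,n_2,q_2,q_2',q_3'$ and the inside ones are $q_3,h$; since the phase is $e\bigl(ahf\,\overline{q_2 q_3 n_2}/(n_1 q_2' q_3')\bigr)$, the admissible groupings are $c = n_1 q_2' q_3'$ (the full denominator, $\sim N Q_2 Q_3$) and $b = n_2 q_2$ (the outside part of the inverted numerator, $\sim N Q_2$), leaving $q_3$ as the level variable as intended --- cf.\ $b = n_2 r_1$ and $c = n_1 r_2 s_2$ in Lemma \ref{lmm:Factorable}. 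Your $d = n_2 q_3$ would fuse an outside variable with the inside variable that is being summed in the inner square, which is not a legal substitution at that point. Second, the ``optimisation at the end'' is where all the content sits: the constraints produced by the Deshouillers--Iwaniec terms are exactly $Q_1 < N x^{-\epsilon}$, $N^2 Q_2 Q_3^2 < x^{1-8\epsilon}$, $N^2 Q_1 Q_2^4 Q_3^3 < x^{2-15\epsilon}$, $N Q_1 Q_2^5 Q_3^2 < x^{2-15\epsilon}$ (Proposition \ref{prpstn:MainProp}), and the exponent $3/5$ falls out only after checking that the specific choice $Q_1 = N x^{-\epsilon}$, $Q_2 = Q x^{-2/5+\epsilon}$, $Q_3 = x^{2/5}/N$ satisfies all four simultaneously for $Q\le x^{3/5-10\epsilon}$ and $x^\epsilon\le N\le x^{2/5}$; this must actually be carried out, and a heuristic ``net saving of $Q_2$'' is not a substitute. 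Finally, the cross-term contribution $\mathscr{E}_1$ from the dispersion (Proposition \ref{prpstn:GeneralDispersion}) is not killed by the Siegel--Walfisz hypothesis; it requires a separate Weil/Kloosterman-sum estimate (Lemma \ref{lmm:BFI2}), which your sketch should name as an explicit ingredient rather than lumping into ``secondary main terms''.
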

%
%
%
%
%%%%%%%%%%%%%%%%%%%%%%%%%%%%%%%%%%%%%%%%%%%%%%%%%%%%%%%%%%
%
%
%
%
Proposition \ref{prpstn:WellFactorable} is our key new ingredient behind the proof, and will be established in Section \ref{sec:WellFactorable}.
%
%
%
%
%%%%%%%%%%%%%%%%%%%%%%%%%%%%%%%%%%%%%%%%%%%%%%%%%%%%%%%%%%
%
%
%
%
\begin{prpstn}[Divisor function in arithmetic progressions]\label{prpstn:DoubleDivisor}
Let $N_1,N_2\ge x^{3\epsilon}$ and $N_1N_2M\asymp x$ and
\begin{align*}
Q&\le \Bigl(\frac{x}{M}\Bigr)^{2/3-3\epsilon}.
\end{align*}
Let $\mathcal{I}\subset[x,2x]$ be an interval, and let $\alpha_m$ a complex sequence with $|\alpha_m|\le \tau(m)^{B_0}$. Then we have that for every $A>0$
\[
\sum_{q\sim Q}\Bigl|\sum_{\substack{n_1\sim N_1\\ P^-(n)\ge z_0}}\sum_{\substack{n_2\sim N_2\\ P^-(n)\ge z_0}}\sum_{\substack{m\sim M\\ m n_1n_2\in\mathcal{I} }}\alpha_m\Bigl(\mathbf{1}_{m n_1 n_2\equiv a\Mod{q}}-\frac{\mathbf{1}_{(m n_1 n_2,q)=1}}{\phi(q)}\Bigr)\Bigr|\ll_{A,B_0} \frac{x}{(\log{x})^A}.
\]
Moreover, the same result holds when the summand is multiplied by $\log{n_1}$.
\end{prpstn}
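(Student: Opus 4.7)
This proposition asserts level of distribution $2/3$ for convolutions of the shape $\alpha_m \ast \mathbf{1}_{n_1} \ast \mathbf{1}_{n_2}$ (restricted to almost-primes), directly analogous to the classical Friedlander--Iwaniec bound for $\tau_3$ in arithmetic progressions. Since $N_1 N_2 \asymp x/M$, the hypothesis $Q \le (x/M)^{2/3-3\epsilon}$ lies exactly at the threshold where the Weil bound on Kloosterman sums will suffice.

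My plan is as follows. First I would reduce to a smooth problem by incorporating the sieve conditions $P^-(n_i) \ge z_0$ into bounded smooth weights, dyadically decomposing the ranges and inserting the cutoff $\psi_0$, and Mellin-truncating the interval condition $mn_1n_2 \in \mathcal{I}$ at height $(\log x)^{O(A)}$. After replacing the outer absolute value by multiplication by a unit sequence $\gamma_q$ with $|\gamma_q| \le 1$, the task reduces to bounding
\[
\sum_{q \sim Q}\gamma_q \sum_m \alpha_m \sum_{n_1,n_2}\psi(n_1,n_2)\Bigl(\mathbf{1}_{mn_1n_2 \equiv a \Mod q}-\frac{\mathbf{1}_{(mn_1n_2,q)=1}}{\phi(q)}\Bigr).
\]

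The analytic core is to rewrite $\mathbf{1}_{mn_1n_2 \equiv a \Mod q}$ as $\mathbf{1}_{n_1 \equiv a\overline{mn_2} \Mod q}$ (valid since $(a,q)=1$ forces $(mn_2,q)=1$) and apply Poisson summation in $n_1$. The zero-frequency contribution $\tfrac{N_1}{q}\hat\psi_0(0)$ cancels the $\tfrac{1}{\phi(q)}$ main term after accounting for the coprimality count, and frequencies with $|h|\gg(q/N_1)x^{\epsilon}$ are negligible by rapid decay of $\hat\psi_0$. The remaining medium-frequency contribution gives exponentials $e(ha\overline{mn_2}/q)$ with $0<|h|\ll (q/N_1)x^{\epsilon}$. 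I would then apply Cauchy--Schwarz to pull $\alpha_m$ outside (losing only $\|\alpha\|_2 \ll M^{1/2}(\log x)^{O(1)}$), and apply Poisson in $m$ to the opened square. The innermost sum becomes a Kloosterman sum modulo $[q,q']$, bounded via the Weil estimate $|\mathrm{Kl}(A,B;c)|\le \tau(c)(A,B,c)^{1/2}c^{1/2}$. Matching the diagonal terms against this off-diagonal saving yields the required power-saving precisely for $Q \le (N_1 N_2)^{2/3-\epsilon} = (x/M)^{2/3-\epsilon}$.

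The main obstacle is the delicate balancing between the Weil saving $(qq')^{1/2}$ and the diagonal contribution, which must be tracked with enough precision (managing gcds, exceptional moduli, and the support of the dual variables) to yield a genuine power-saving rather than merely a logarithmic one at the boundary $Q = (x/M)^{2/3}$. The lower bounds $N_1, N_2 \ge x^{3\epsilon}$ are essential so that both Poisson dual lengths $q/N_i$ remain genuinely bounded and the Kloosterman sums are long enough for Weil to beat the trivial bound. The variant with a $\log n_1$ weight is routine: one writes $\log n_1 = \log N_1 + O(1)$ for $n_1 \sim N_1$, with the leading constant absorbed by the dyadic decomposition and the bounded remainder acting as a smooth multiplier causing no essential difficulty.
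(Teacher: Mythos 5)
Your preliminary reductions (absorbing the sieve conditions into smooth weights, dyadic decomposition, removing the absolute value with a sequence $\gamma_q$, and Poisson summation in $n_1$) all match the paper's proof. The divergence is at what you call the analytic core, and it introduces a genuine gap. After Poisson in $n_1$, the paper simply applies Poisson summation in the \emph{other} smooth variable $n_2$ as well; the zero frequency cancels against $\mathscr{K}_1$, and the nonzero frequencies $h_2$ give Kloosterman sums $\sum_{(b,q)=1} e((ah_1\overline{mb}+h_2b)/q)$ modulo $q$, bounded by Weil. This is entirely pointwise in $q$ (and in $a$): no Cauchy--Schwarz is needed, the sum over $q\sim Q$ at the end is just multiplication by $Q$, and the resulting constraint is exactly $MQ^{3/2}\ll x^{1-\epsilon}$, i.e.\ $Q\ll(x/M)^{2/3-\epsilon}$.

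Your route instead applies Cauchy--Schwarz in $m$ and then Poisson in $m$, producing Kloosterman sums modulo $[q,q']$, a dispersion-type argument. If you carry this out, the $n_2,n_2'$ sums are never exploited: after opening the square one has free sums over $n_2,n_2'\sim N_2$, the diagonal $(q,h_1,n_2)=(q',h_1',n_2')$ contributes $\ll M^{1/2}x^{1/2}$, but the generic off-diagonal $(q,q')=1$, $k\ne 0$ contributes roughly $\|\alpha\|_2\bigl(Q^2\cdot N_2^2\cdot H_1^2\cdot(N_1/Q)^2\cdot(M/Q^2)\cdot(Q^2/M)\cdot Q\bigr)^{1/2}\asymp M^{1/2}Q^{3/2}N_2$. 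This is $\ll x^{1-\epsilon}$ only when $Q\ll M^{1/3}N_1^{2/3}x^{-\epsilon}$, which is strictly weaker than the asserted $Q\le(N_1N_2)^{2/3-3\epsilon}$ whenever $N_2^2>M$. That is precisely the regime where Proposition~\ref{prpstn:DoubleDivisor} is actually invoked in the proof of Theorem~\ref{thrm:Factorable}: there one has $N_1,N_2>x^{2/5}$ and $M<x^{O(\epsilon)}$, so $N_2^2\gg x^{4/5}\gg M$, and the dispersion bound is off by a large power of $x$. So ``matching the diagonal against the off-diagonal'' does not land at $(N_1N_2)^{2/3}$ as you claim. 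The point you are missing is that in a $\tau_3$-type problem with two smooth long variables, the level $2/3$ comes from completing \emph{both} $n_1$ and $n_2$ and hitting a Kloosterman sum modulo $q$ with Weil; Cauchy--Schwarz in the rough variable $m$ throws away the $n_2$-smoothness and cannot recover it.
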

%
%
%
%
%%%%%%%%%%%%%%%%%%%%%%%%%%%%%%%%%%%%%%%%%%%%%%%%%%%%%%%%%%
%
%
%
%
Proposition \ref{prpstn:DoubleDivisor} is essentially a known result (due to independent unpublished work of Selberg and Hooley, but following quickly from the Weil bound for Kloosterman sums), but for concreteness we give a proof in Section \ref{sec:DoubleDivisor}. 

Finally, we require a suitable combinatorial decomposition of the primes.
%
%
%
%
%%%%%%%%%%%%%%%%%%%%%%%%%%%%%%%%%%%%%%%%%%%%%%%%%%%%%%%%%%
%
%
%
%
\begin{lmm}[Heath-Brown identity]\label{lmm:HeathBrown}
Let $k\ge 1$ and $n\le 2x$. Then we have
\[
\Lambda(n)=\sum_{j=1}^k (-1)^j \binom{k}{j}\sum_{\substack{n=n_1\cdots n_{j}m_1\cdots m_j\\ m_1,\dots,m_j\le 2x^{1/k}}}\mu(m_1)\cdots \mu(m_j)\log{n_{1}}.
\]
\end{lmm}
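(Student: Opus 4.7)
The plan is to prove this classical Heath-Brown identity by a short Dirichlet series manipulation. Set $y := 2x^{1/k}$ and let $M(s) := \sum_{m \le y} \mu(m) m^{-s}$ denote the truncation of $1/\zeta(s)$ to $m \le y$. The crux of the argument is the observation that the Dirichlet series $1 - \zeta(s) M(s)$ has coefficients supported on $n > y$: the $n$-th coefficient equals $\mathbf{1}_{n=1} - \sum_{d \mid n,\, d \le y} \mu(d)$, and for $n > 1$ the identity $\sum_{d \mid n} \mu(d) = 0$ rewrites this as $\sum_{d \mid n,\, d > y} \mu(d)$, which vanishes unless $n$ itself exceeds $y$. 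Taking a $k$-fold product then forces $(1 - \zeta(s) M(s))^k$ to have Dirichlet coefficients supported on $n > y^k = 2^k x \ge 2x$.

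Next I would apply the binomial expansion $1 - (1 - \zeta M)^k = \sum_{j=1}^{k} \binom{k}{j} (-1)^{j-1} (\zeta M)^{j}$ and multiply through by $-\zeta'(s)/\zeta(s)$, whose $n$-th Dirichlet coefficient is $\Lambda(n)$. The product $-(\zeta'/\zeta)\,(1 - \zeta M)^k$ still has support on $n > 2x$, so it vanishes in the range of the lemma and leaves
\[
\Lambda(n) = \sum_{j=1}^{k} \binom{k}{j} (-1)^{j-1} \bigl[\,-\zeta'(s)\, \zeta(s)^{j-1} M(s)^{j}\,\bigr]_n \qquad (n \le 2x),
\]
where $[\,\cdot\,]_n$ extracts the $n$-th Dirichlet coefficient. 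Using that $-\zeta'(s) = \sum_n (\log n)\, n^{-s}$, this coefficient unpacks directly to $\sum_{n = n_1 \cdots n_j m_1 \cdots m_j,\, m_i \le y} \mu(m_1) \cdots \mu(m_j) \log n_1$, which is exactly the inner sum appearing in the statement.

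There is essentially no hard step: the entire proof hinges on the single observation that $1 - \zeta(s) M(s)$ is supported on integers exceeding $y$, which is what allows the $k$-fold remainder term to be discarded for free rather than having to be estimated against the main term. The same manipulation can equivalently be carried out in the language of Dirichlet convolutions of arithmetic functions, by splitting $\mu = \mu\mathbf{1}_{\le y} + \mu\mathbf{1}_{> y}$ and iterating $\delta - \mathbf{1} \ast \mu\mathbf{1}_{\le y} = \mathbf{1} \ast \mu\mathbf{1}_{> y}$ under the binomial theorem; this is sometimes more convenient for the combinatorial bookkeeping.
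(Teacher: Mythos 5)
Your proof is correct, and it is the standard Dirichlet series derivation of Heath-Brown's identity. The paper simply cites \cite{HBVaughan} without giving an argument, so there is no in-paper proof to compare against; your write-up supplies a valid proof of the cited result. The key observation --- that $1-\zeta(s)M(s)$ has Dirichlet coefficients supported on $n>y$, hence $(1-\zeta M)^k$ is supported on $n>y^k = 2^k x\ge 2x$, so that multiplying by $-\zeta'/\zeta$ and restricting to $n\le 2x$ kills the remainder identically rather than requiring an estimate --- is exactly what drives the identity, and you have stated and used it correctly. The binomial rearrangement $1-(1-\zeta M)^k=\sum_{j=1}^k\binom{k}{j}(-1)^{j-1}(\zeta M)^j$ and the coefficient extraction from $(-\zeta')\zeta^{j-1}M^j$ are both right.

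One point worth flagging: your derivation produces the sign $(-1)^{j-1}$ in front of $\binom{k}{j}$, whereas the lemma as printed in the paper has $(-1)^{j}$. Your sign is the correct one, and the paper's is a typo. This is visible already at $k=1$: the printed formula would assert $\Lambda(n)=-\sum_{n=n_1m_1}\mu(m_1)\log n_1$, i.e.\ $\Lambda(n)=-\Lambda(n)$, since $\mu\ast\log=\Lambda$; with $(-1)^{j-1}$ one instead recovers the genuine identity. The discrepancy is harmless for the paper, which only uses the decomposition to reduce to bilinear sums whose contributions are bounded in absolute value, but it is to your credit that the calculation came out with the right sign rather than matching the misprint.
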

\begin{proof}
See \cite{HBVaughan}.
\end{proof}
%
%
%
%
%%%%%%%%%%%%%%%%%%%%%%%%%%%%%%%%%%%%%%%%%%%%%%%%%%%%%%%%%%
%
%
%
%
\begin{lmm}[Consequence of the fundamental lemma of the sieve]\label{lmm:Fundamental}
Let $q,t,x\ge 2$ satisfy $q x^\epsilon \le t$ and let $(b,q)=1$. Recall $z_0=x^{1/(\log\log{x})^3}$. Then we have
\[
\sum_{\substack{n\le t\\ n\equiv b\Mod{q}\\ P^-(n)\ge z_0}}1=\frac{1}{\phi(q)}\sum_{\substack{n\le t\\ P^-(n)\ge z_0}}1+O_A\Bigl(\frac{t}{q(\log{x})^A}\Bigr).
\]
\end{lmm}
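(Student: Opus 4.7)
The plan is to apply the fundamental lemma of the sieve to both counts and verify that the resulting main terms agree up to a negligible error. Let $\mathcal{A}=\{n\le t:n\equiv b\Mod{q}\}$ and $\mathcal{B}=\{n\le t\}$, and write $S(\mathcal{A},z_0)$, $S(\mathcal{B},z_0)$ for the corresponding counts of elements with $P^-(n)\ge z_0$, so the lemma asserts that $S(\mathcal{A},z_0)-\phi(q)^{-1}S(\mathcal{B},z_0)\ll_A t/(q(\log x)^A)$. Since $(b,q)=1$, for squarefree $d$ the density function for $\mathcal{A}$ is $g(d)=1/d$ when $(d,q)=1$ and $g(d)=0$ otherwise, with remainders $|r_d|\le 1$; for $\mathcal{B}$ we simply have $g(d)=1/d$ and $|r_d|\le 1$.

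Next I would apply the standard fundamental lemma of the sieve with sifting limit $z_0$ and level parameter $D=x^{\epsilon/2}$. The hypothesis $qx^\epsilon\le t$ ensures $D\le (t/q)x^{-\epsilon/2}$, so the accumulated remainder $\sum_{d<D}|r_d|\ll D$ is trivially absorbed into the target error $t/(q(\log x)^A)$. The crucial observation is that the sifting parameter
\[
s=\frac{\log D}{\log z_0}=\frac{\epsilon}{2}(\log\log x)^3
\]
is much larger than any constant multiple of $\log\log x$, so the relative error $O(e^{-s})$ from the fundamental lemma is smaller than $(\log x)^{-A}$ for every fixed $A$. This yields
\[
S(\mathcal{A},z_0)=\frac{t}{q}\prod_{p<z_0,\,p\nmid q}(1-1/p)+O_A\Bigl(\frac{t}{q(\log x)^A}\Bigr),
\]
and analogously $S(\mathcal{B},z_0)=t\prod_{p<z_0}(1-1/p)+O_A(t/(\log x)^A)$.

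The final step is a direct comparison of main terms. Using $\phi(q)^{-1}=q^{-1}\prod_{p\mid q}(1-1/p)^{-1}$ one readily obtains
\[
\frac{t}{q}\prod_{p<z_0,\,p\nmid q}(1-1/p)=\frac{t}{\phi(q)}\prod_{p<z_0}(1-1/p)\cdot\prod_{p\mid q,\,p\ge z_0}(1-1/p),
\]
and the last product differs from $1$ by $O(\omega(q)/z_0)\ll x^{-\epsilon/3}$, contributing an error vastly smaller than $t/(q(\log x)^A)$. The hardest part of the argument, such as it is, is this bookkeeping around whether the product over primes dividing $q$ is included. There is no substantive obstacle: the generous choice of $z_0$ is tailored precisely so that all the sieve error terms are super-polynomially small in $\log x$.
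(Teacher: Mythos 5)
Your proof is correct and is exactly the argument that the paper's one-line citation to the fundamental lemma (Opera de Cribro, Theorem 6.12) is invoking: sieve both $\{n\le t:n\equiv b\Mod q\}$ and $\{n\le t\}$ to level $D=x^{\epsilon/2}$, note $s=\log D/\log z_0\gg(\log\log x)^3$ kills the sieve error super-polylogarithmically, and compare main terms. One small slip: $\omega(q)/z_0$ is \emph{not} $\ll x^{-\epsilon/3}$ (since $z_0=x^{1/(\log\log x)^3}$ is eventually smaller than any fixed power of $x$), but it \emph{is} $\ll_A(\log x)^{-A}$ because $z_0$ grows faster than any power of $\log x$, which is all that the conclusion requires.
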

\begin{proof}
This is an immediate consequence of the fundamental lemma of sieve methods - see, for example, \cite[Theorem 6.12]{Opera}.
\end{proof}
%
%
%
%
%%%%%%%%%%%%%%%%%%%%%%%%%%%%%%%%%%%%%%%%%%%%%%%%%%%%%%%%%%
%
%
%
%
\begin{proof}[Proof of Theorem \ref{thrm:Factorable} assuming Proposition \ref{prpstn:WellFactorable} and \ref{prpstn:DoubleDivisor}]
By partial summation (noting that prime powers contribute negligibly and retaining the conditon $P^-(n)\ge z_0$), it suffices to show that for all $t\in [x,2x]$
\[
\sum_{q\le x^{3/5-\epsilon} }\lambda_q\sum_{\substack{x\le n\le t\\ P^-(n)\ge z_0}}\Lambda(n)\Bigl(\mathbf{1}_{n\equiv a\Mod{q}}-\frac{\mathbf{1}_{(n,q)=1}}{\phi(q)}\Bigr)\ll_A \frac{x}{(\log{x})^A}.
\]
We now apply Lemma \ref{lmm:HeathBrown} with $k=3$ to expand $\Lambda(n)$ into various subsums, and put each variable into one of $O(\log^6{x})$ dyadic intervals. Thus it suffices to show that for all choices of $N_1,N_2,N_3,M_1,M_2,M_3$ with $M_1M_2M_3N_1N_2N_3\asymp x$ and $M_i\le x^{1/3}$ we have
\begin{align*}
\sum_{q\le x^{3/5-\epsilon} }\lambda_q\sum_{\substack{m_1,m_2,m_3,n_1,n_2,n_3\\ n_i\sim N_i\,\forall i\\ m_i\sim M_i\,\forall i\\ x\le n \le t\\ P^-(n_i),P^-(m_i)\ge z_0\,\forall i}}\mu(m_1)\mu(m_2)\mu(m_3)(\log{n_1})\Bigl(\mathbf{1}_{n\equiv a\Mod{q}}-\frac{\mathbf{1}_{(n,q)=1}}{\phi(q)}\Bigr)\\
\ll_A \frac{x}{(\log{x})^{A+6}},
\end{align*}
where we have written $n=n_1n_2n_3m_1m_2m_3$ in the expression above for convenience. 

By grouping all but one variable together, Proposition \ref{prpstn:WellFactorable} gives this if any of the $N_i$ or $M_i$ lie in the interval $[x^\epsilon,x^{2/5}]$, and so we may assume all are either smaller than $x^\epsilon$ or larger than $x^{2/5}$. Since $M_i\le x^{1/3}\le x^{2/5}$, we may assume that $M_1,M_2,M_3\le x^\epsilon$. There can be at most two of the $N_i$'s which are larger than $x^{2/5}$ since $M_1M_2M_3N_1N_2N_3\asymp x$. 

If only one of the $N_i$'s are greater than $x^{2/5}$ then they must be of size $\gg x^{1-5\epsilon}>x^\epsilon q$, and so the result is trivial by summing over this variable first and using Lemma \ref{lmm:Fundamental}.

If two of the $N_i$'s are larger than $x^{2/5}$ and all the other variables are less than $x^\epsilon$, then the result follows immediately from Proposition \ref{prpstn:DoubleDivisor}. This gives the result.
\end{proof}
%
%
%
%
%%%%%%%%%%%%%%%%%%%%%%%%%%%%%%%%%%%%%%%%%%%%%%%%%%%%%%%%%%
%
%
%
%
To complete the proof of Theorem \ref{thrm:Factorable}, we are left to establish Propositions \ref{prpstn:WellFactorable} and \ref{prpstn:DoubleDivisor}, which we will ultimately do in Sections \ref{sec:WellFactorable} and \ref{sec:DoubleDivisor} respectively.
%
%
%
%
%%%%%%%%%%%%%%%%%%%%%%%%%%%%%%%%%%%%%%%%%%%%%%%%%%%%%%%%%%
%
%
%
%
\section{Preparatory lemmas}\label{sec:Lemmas}
%
%
%
%
%%%%%%%%%%%%%%%%%%%%%%%%%%%%%%%%%%%%%%%%%%%%%%%%%%%%%%%%%%
%
%
%
%
\begin{lmm}[Divisor function bounds]\label{lmm:Divisor}
Let $|b|< x-y$ and $y\ge q x^\epsilon$. Then we have
\[
\sum_{\substack{x-y\le n\le x\\ n\equiv a\Mod{q}}}\tau(n)^C\tau(n-b)^C\ll \frac{y}{q} (\tau(q)\log{x})^{O_{C}(1)}.
\]
\end{lmm}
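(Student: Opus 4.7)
The plan is to decouple the product $\tau(n)^C \tau(n-b)^C$ by Cauchy--Schwarz and then estimate each of the two resulting sums of $\tau(\cdot)^{2C}$ in an arithmetic progression via a Brun--Titchmarsh-type inequality for multiplicative functions (Shiu's theorem). By the Cauchy--Schwarz inequality,
\[
\sum_{\substack{x-y\le n\le x\\ n\equiv a\Mod{q}}}\tau(n)^C\tau(n-b)^C \le S_1^{1/2} S_2^{1/2},
\]
where $S_1 = \sum \tau(n)^{2C}$ and $S_2 = \sum \tau(n-b)^{2C}$, both taken over $x-y \le n \le x$ with $n \equiv a \Mod{q}$. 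After the substitution $m = n - b$, $S_2$ becomes a sum of $\tau(m)^{2C}$ over $m \equiv a - b \Mod{q}$ with $x-y-b \le m \le x-b$; by the hypothesis $|b| < x - y$ this range lies in the positive integers and has length $y$, so it is of exactly the same shape as $S_1$.

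Next I would apply Shiu's theorem to each of $S_1$ and $S_2$. Shiu's result requires the residue class to be coprime to the modulus, which need not hold here. To handle this, let $d = (a,q)$ and write $n = dn'$, $q = dq'$, $a = da'$ with $(a',q') = 1$. Using the submultiplicativity $\tau(dn') \le \tau(d)\tau(n')$ gives
\[
S_1 \le \tau(d)^{2C} \sum_{\substack{(x-y)/d\le n'\le x/d\\ n'\equiv a'\Mod{q'}}} \tau(n')^{2C},
\]
which is a sum over a range of length $y/d$ with modulus $q' = q/d$. The hypothesis $y \ge q x^\epsilon$ becomes $y/d \ge q' x^\epsilon$, which is precisely the regime in which Shiu's theorem is applicable, and it yields a bound of the form $(y/q)(\log x)^{O_C(1)}$. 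Since $\tau(d) \le \tau(q)$, this produces $S_1 \ll (y/q)(\tau(q) \log x)^{O_C(1)}$; an analogous argument, applied with residue $a-b$ modulo $q$ and common factor $(a-b,q)$, controls $S_2$ in the same fashion.

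Combining via Cauchy--Schwarz then gives $(S_1 S_2)^{1/2} \ll (y/q)(\tau(q) \log x)^{O_C(1)}$, the desired bound. The only mildly delicate point is the coprimality reduction, where we must absorb a factor $\tau(d)^{O_C(1)} \le \tau(q)^{O_C(1)}$; this is harmless given the shape of the target bound. No substantial obstacle arises, as the hypothesis $y \ge q x^\epsilon$ is tailored precisely to the regime in which Shiu's theorem operates, and everything else is routine bookkeeping.
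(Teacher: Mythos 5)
Your proof is correct and follows the route the paper itself indicates: the paper's proof of this lemma is simply a citation to Shiu's theorem (via Lemma 7.7 of the predecessor paper), and your Cauchy--Schwarz decoupling into two sums of $\tau(\cdot)^{2C}$, followed by the coprimality reduction with $d=(a,q)$ and an application of Shiu's theorem, is the standard way to carry this out. The bookkeeping is sound: $d\mid n$ follows from $n\equiv a\Mod{q}$ and $d\mid\gcd(a,q)$; submultiplicativity gives $\tau(dn')^{2C}\le\tau(d)^{2C}\tau(n')^{2C}$; the reduced sum has length $y/d$ and modulus $q/d$, and the hypothesis $y\ge qx^\epsilon$ descends to exactly the regime Shiu's theorem needs; and the stray factors $\tau(d)^{O_C(1)}\le\tau(q)^{O_C(1)}$ and $q/\phi(q)\ll\log\log x$ are absorbed by $(\tau(q)\log x)^{O_C(1)}$. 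The shifted sum $S_2$ is handled identically after $m=n-b$, with $|b|<x-y$ guaranteeing the interval stays in the positive integers and has length $y$. No gap.
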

\begin{proof}
This follows from Shiu's Theorem \cite{Shiu}, and is given in \cite[Lemma 7.7]{May1}.
\end{proof}
%
%
%
%
%%%%%%%%%%%%%%%%%%%%%%%%%%%%%%%%%%%%%%%%%%%%%%%%%%%%%%%%%%
%
%
%
%
\begin{lmm}[Separation of variables from inequalities]\label{lmm:Separation}
Let $Q_1Q_2\le x^{1-\epsilon}$. Let $N_1,\dots, N_r\ge z_0$ satisfy $N_1\cdots N_r\asymp x$. Let $\alpha_{n_1,\dots,n_r}$ be a complex sequence with $|\alpha_{n_1,\dots,n_r}|\le (\tau(n_1)\cdots \tau(n_r))^{B_0}$. Then, for any choice of $A>0$ there is a constant $C=C(A,B_0,r)$ and intervals $\mathcal{I}_1,\dots,\mathcal{I}_r$ with $\mathcal{I}_j\subseteq [P_j,2P_j]$ of length $\le P_j(\log{x})^{-C}$ such that
\begin{align*}
\sum_{q_1\sim Q_1}\sum_{\substack{q_2\sim Q_2\\ (q_1q_2,a)=1}}&\Bigl|\,\sideset{}{^*}\sum_{\substack{n_1,\dots,n_r\\ n_i\sim N_i\forall i}}\alpha_{n_1,\dots,n_r}S_{n_1\cdots n_r}\Bigr|\\
&\ll_r \frac{x}{(\log{x})^A}+(\log{x})^{r C}\sum_{q_1\sim Q_1}\sum_{\substack{q_2\sim Q_2\\ (q_1q_2,a)=1}}\Bigl|\sum_{\substack{n_1,\dots,n_r\\ n_i\in \mathcal{I}_i\forall i}}\alpha_{n_1,\dots,n_r}S_{n_1\cdots n_r}\Bigr|.
\end{align*}
Here $\sum^*$ means that the summation is restricted to $O(1)$ inequalities of the form $n_1^{\alpha_1}\cdots n_r^{\alpha_r}\le B$ for some constants $\alpha_1,\dots \alpha_r$ and some quantity $B$.  The implied constant may depend on all such exponents $\alpha_i$, but none of the quantities $B$.
\end{lmm}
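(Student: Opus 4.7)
The strategy is to subdivide each dyadic range into many short subintervals, then use pigeonhole to select one dominant sub-interval per variable, handling the $O(1)$ inequality constraints in $\sum^*$ by showing they only affect a negligible fraction of the resulting boxes.

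First, for a parameter $C=C(A,B_0,r)$ to be chosen later, I split each $[N_j,2N_j]$ into $\asymp(\log x)^C$ intervals $\mathcal{J}_j^{(k_j)}$ of length $\le N_j(\log x)^{-C}$, and call the Cartesian products $\mathcal{B}(\mathbf{k})=\mathcal{J}_1^{(k_1)}\times\cdots\times\mathcal{J}_r^{(k_r)}$ \emph{boxes}; there are at most $(\log x)^{rC}$ of these. For each inequality $f(\mathbf{n})=n_1^{\alpha_1}\cdots n_r^{\alpha_r}\le B$ appearing in $\sum^*$, classify $\mathcal{B}$ as \emph{interior} if $f\le B$ on all of $\mathcal{B}$, \emph{exterior} if $f>B$ on all of $\mathcal{B}$, and \emph{boundary} otherwise. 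On each box $\log f$ varies by only $O((\log x)^{-C})$ (the exponents $\alpha_j$ being fixed constants), so boundary boxes are precisely those through which the hypersurface $f=B$ passes; fixing the subintervals of all but one active coordinate, $f=B$ pins down a single value of the remaining variable, giving at most one boundary box for that slice. Summing over slices and over the $O(1)$ inequalities, the total number of boundary boxes is $\ll (\log x)^{(r-1)C}$.

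Now call a box \emph{good} if it is interior for every inequality, \emph{null} if it is exterior for at least one inequality, and \emph{bad} otherwise. On good boxes the constraints in $\sum^*$ hold automatically, while null boxes contribute nothing, so
\begin{equation*}
\sum_{q_1\sim Q_1}\sum_{q_2\sim Q_2}\Bigl|\,\sideset{}{^*}\sum_{\mathbf{n}}\alpha_{\mathbf{n}} S_{n_1\cdots n_r}\Bigr|\ \le\ T_{\mathrm{good}}+T_{\mathrm{bad}},
\end{equation*}
where $T_{\mathrm{good}}$ and $T_{\mathrm{bad}}$ are the corresponding partial sums. Pigeonholing over the $\le (\log x)^{rC}$ good boxes, $T_{\mathrm{good}}$ is at most $(\log x)^{rC}$ times the contribution of a single dominant good box $(\mathcal{I}_1,\ldots,\mathcal{I}_r)$, which is exactly the main term on the right-hand side of the claimed inequality.

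It remains to control $T_{\mathrm{bad}}$. Each bad box confines every $n_j$ to an interval of length $\le N_j(\log x)^{-C}$, and together with $|\alpha_\mathbf{n}|\le \prod_j\tau(n_j)^{B_0}$, Shiu-type estimates give $\sum_{\mathbf{n}\in \mathcal{B}}|\alpha_\mathbf{n}|\ll x(\log x)^{-rC+O_{B_0,r}(1)}$. The outer sum $\sum_{q_1,q_2}$ applied to a trivial bound on $|S_{n_1\cdots n_r}|$ (in applications $S$ is the arithmetic-progression discrepancy, so such a bound reduces to divisor-weighted progression counting) is controlled by Lemma \ref{lmm:Divisor}, whose hypotheses are met because $Q_1Q_2\le x^{1-\epsilon}$ guarantees the requisite progression length $\ge x^\epsilon$; this contributes an extra $(\log x)^{O(1)}$. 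Multiplying by the $\ll(\log x)^{(r-1)C}$ count of bad boxes yields $T_{\mathrm{bad}}\ll x(\log x)^{-C+O_r(1)}$, which is $\ll x/(\log x)^A$ once $C$ is chosen large in terms of $A,B_0,r$. The main obstacle is precisely this last balancing: the length shortness of bad boxes has to beat both their count and the combinatorial losses from Lemma \ref{lmm:Divisor}, and this is where the hypothesis $Q_1Q_2\le x^{1-\epsilon}$ is essential.
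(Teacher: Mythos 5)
The overall strategy (subdivide into boxes of relative width $(\log x)^{-C}$, pigeonhole on good boxes, bound the boundary contribution) is the right shape, but your treatment of $T_{\mathrm{bad}}$ has a genuine quantitative gap. You compute the bad-box contribution as
$(\log x)^{(r-1)C}\cdot x(\log x)^{-rC+O(1)}\cdot(\log x)^{O(1)}$, claiming the $q_1,q_2$-sum against the trivial bound on $|S|$ only costs $(\log x)^{O(1)}$. That last claim is where things break. For the congruence part of $|S_n|$, the only usable tool is Lemma~\ref{lmm:Divisor} applied to the \emph{product} variable $n=n_1\cdots n_r$. But as $\mathbf{n}$ ranges over a box $\mathcal{B}=\mathcal{I}_1\times\cdots\times\mathcal{I}_r$, the product $n$ fills an interval $I_{\mathcal{B}}$ of \emph{relative} width $\asymp r(\log x)^{-C}$ — the relative lengths of the $\mathcal{I}_j$ \emph{add}, they do not multiply. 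So $|I_{\mathcal{B}}|\asymp x(\log x)^{-C}$, not $x(\log x)^{-rC}$, and
\[
\sum_{q_1\sim Q_1}\sum_{q_2\sim Q_2}\sum_{\substack{\mathbf{n}\in\mathcal{B}\\ n\equiv a\Mod{q_1q_2}}}|\alpha_{\mathbf{n}}|
\;\ll\;\sum_{q}\sum_{\substack{n\in I_{\mathcal{B}}\\ n\equiv a\Mod q}}\tau(n)^{O_{B_0,r}(1)}
\;\ll\; x(\log x)^{-C+O(1)},
\]
using $Q_1Q_2\le x^{1-\epsilon}$ to verify the hypothesis of Lemma~\ref{lmm:Divisor}. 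This is a factor $(\log x)^{(r-1)C}$ worse than your per-box figure. Multiplying by your bad-box count $(\log x)^{(r-1)C}$ gives $T_{\mathrm{bad}}\ll x(\log x)^{(r-2)C+O(1)}$, which is useless (indeed divergent in $C$) as soon as $r\ge 2$.

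The underlying problem is that counting bad boxes one by one double-counts the congruence structure: after converting each bad box to its product interval $I_{\mathcal{B}}$, those $(\log x)^{(r-1)C}$ intervals together tile essentially all of $[x,2x]$, so you cannot afford to treat them independently. The correct route is to bound the union of bad boxes \emph{collectively}: the bad boxes for a fixed inequality $n_1^{\alpha_1}\cdots n_r^{\alpha_r}\le B$ all lie in the shell $|\sum_j\alpha_j\log n_j-\log B|\ll(\log x)^{-C}$, and one must count solutions to the congruence inside that shell directly (e.g.\ solve the shell constraint for one coordinate with a nonzero exponent, then sum the resulting short-interval progression count over the remaining coordinates and over $q$). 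That gives $x(\log x)^{-C+O(1)}$ total without the spurious $(\log x)^{(r-1)C}$ blow-up, but this step requires care — in particular the case where only one $\alpha_j$ is nonzero, so the shell does not project to a short interval in $n$, needs a separate argument — and is not present in your write-up. As it stands, the bound on $T_{\mathrm{bad}}$ is not justified.
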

\begin{proof}
This is \cite[Lemma 7.10]{May1}.
\end{proof}
%
%
%
%
%%%%%%%%%%%%%%%%%%%%%%%%%%%%%%%%%%%%%%%%%%%%%%%%%%%%%%%%%%
%
%
%
%
\begin{lmm}[Poisson Summation]\label{lmm:Completion}
Let $C>0$ and $f:\mathbb{R}\rightarrow\mathbb{R}$ be a smooth function which is supported on $[-10,10]$ and satisfies $\|f^{(j)}\|_\infty\ll_j (\log{x})^{j C}$ for all $j\ge 0$, and let $M,q\le x$. Then we have
\[
\sum_{m\equiv a\Mod{q}} f\Bigl(\frac{m}{M}\Bigr)=\frac{M}{q}\hat{f}(0)+\frac{M}{q}\sum_{1\le |h|\le H}\hat{f}\Bigl(\frac{h M}{q}\Bigr)e\Bigl(\frac{ah}{q}\Bigr)+O_C(x^{-100}),
\]
for any choice of $H>q x^\epsilon/M$.
\end{lmm}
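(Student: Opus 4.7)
The plan is to apply the classical Poisson summation formula after a change of variable, then truncate the Fourier series using the rapid decay of $\hat{f}$ that comes from the derivative bounds on $f$.

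First I would parametrize the residue class by writing $m = a + qn$ for $n \in \mathbb{Z}$, so that the left hand side equals $\sum_{n\in\mathbb{Z}} g(n)$ where $g(t) = f((a+qt)/M)$. Since $f$ is Schwartz (indeed compactly supported and smooth), so is $g$, and the classical Poisson summation formula gives $\sum_n g(n) = \sum_h \hat{g}(h)$. A direct computation with the substitution $u = (a+qt)/M$ yields
\[
\hat{g}(h) = \frac{M}{q}\, e\!\left(\frac{ah}{q}\right) \hat{f}\!\left(\frac{hM}{q}\right),
\]
and separating the term $h=0$ already produces the claimed main term $(M/q)\hat{f}(0)$ together with the full bilateral sum over $h\neq 0$.

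Next I would control the tail $|h| > H$. Since $f$ is supported in $[-10,10]$ and $\|f^{(j)}\|_\infty \ll_j (\log x)^{jC}$, repeated integration by parts gives, for every integer $j\ge 0$,
\[
\hat{f}(\xi) \ll_j \frac{(\log x)^{jC}}{|\xi|^j}
\]
uniformly for $\xi \neq 0$. For $|h| > H > q x^{\epsilon}/M$ we have $|hM/q| > x^{\epsilon}$, so choosing $j$ sufficiently large (in terms of $C$) ensures each term satisfies $\hat{f}(hM/q) \ll (q/(|h|M))^{j} (\log x)^{jC}$. Summing over $|h|>H$, the geometric-like series converges and the total contribution is bounded by $(M/q) \cdot (q/(HM))^{j-1} (\log x)^{jC} \ll x^{-100}$ once $j$ is taken large enough, using $H M/q > x^{\epsilon}$.

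The only thing to watch is that I truncate before including the $h\neq 0$ terms in the stated range $1\le |h|\le H$; everything outside this range is absorbed into the $O_C(x^{-100})$ error by the decay estimate above. There is no real obstacle here — the proof is routine Poisson summation plus integration by parts — and the only care needed is to make the dependence of the decay on $C$ explicit so that the choice of $j$ can be made uniform in the parameters.
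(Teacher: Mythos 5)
Your argument is correct and is the standard proof: parametrize the residue class, apply classical Poisson summation to get $\hat{g}(h)=\frac{M}{q}e(ah/q)\hat{f}(hM/q)$, and truncate using the rapid decay $\hat{f}(\xi)\ll_j (\log x)^{jC}|\xi|^{-j}$ obtained by integrating by parts $j$ times against the compact support of $f$. The paper itself does not prove this lemma inline but cites \cite[Lemma 12.4]{May1}, which follows exactly this route, so your proof matches the intended one. One tiny bookkeeping slip: the tail contribution is $\ll_j (\log x)^{jC}(q/(HM))^{j-1}$ rather than carrying the extra $M/q$ prefactor you wrote, but since $M/q\le x$ this does not affect the conclusion once $j$ is chosen large in terms of $C$ and $\epsilon$ (with $\epsilon$ fixed throughout the paper, the dependence is legitimately only on $C$).
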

\begin{proof}
This follows from \cite[Lemma 12.4]{May1}.
\end{proof}
%
%
%
%
%%%%%%%%%%%%%%%%%%%%%%%%%%%%%%%%%%%%%%%%%%%%%%%%%%%%%%%%%%
%
%
%
%
\begin{lmm}[Summation with coprimality constraint]\label{lmm:TrivialCompletion}
Let $C>0$ and $f:\mathbb{R}\rightarrow\mathbb{R}$ be a smooth function which is supported on $[-10,10]$ and satisfies $\|f^{(j)}\|_\infty\ll_j (\log{x})^{j C}$ for all $j\ge 0$. Then we have
\[
\sum_{(m,q)=1}f\Bigl(\frac{m}{M}\Bigr)=\frac{\phi(q)}{q}M+O(\tau(q)(\log{x})^{2C}).
\]
\end{lmm}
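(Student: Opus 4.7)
The plan is to remove the coprimality condition by Möbius inversion and then estimate the resulting smooth sums by Poisson summation, collecting main terms and bounding error terms using the derivative hypothesis on $f$.

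First, I would write $\mathbf{1}_{(m,q)=1}=\sum_{d\mid (m,q)}\mu(d)$ and interchange the order of summation to obtain
\[
\sum_{(m,q)=1}f\Bigl(\frac{m}{M}\Bigr)=\sum_{d\mid q}\mu(d)\sum_{k}f\Bigl(\frac{dk}{M}\Bigr).
\]
Split divisors of $q$ according to whether $d\le M$ or $d>M$. For $d>M$, the function $f(dk/M)$ is supported on at most $O(1)$ integers $k$ (since $f$ is supported in $[-10,10]$), and each term is bounded by $\|f\|_\infty\ll 1$, so the total contribution from these divisors is $O(\tau(q))$, which is absorbed in the claimed error term.

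For $d\le M$, I would apply Poisson summation to the inner sum:
\[
\sum_k f\Bigl(\frac{dk}{M}\Bigr)=\frac{M}{d}\sum_{h\in\mathbb{Z}}\hat f\Bigl(\frac{hM}{d}\Bigr).
\]
The $h=0$ term yields the main contribution $(M/d)\hat f(0)$. For $h\neq 0$, repeated integration by parts using the derivative bound $\|f^{(j)}\|_\infty\ll_j (\log x)^{jC}$ yields $\hat f(\xi)\ll_j (\log x)^{jC}|\xi|^{-j}$, so taking $j=2$ the non-zero frequencies contribute
\[
\frac{M}{d}\sum_{h\ne 0}\Bigl|\hat f\Bigl(\frac{hM}{d}\Bigr)\Bigr|\ll \frac{M}{d}\cdot (\log x)^{2C}\Bigl(\frac{d}{M}\Bigr)^2\sum_{h\ne 0}\frac{1}{h^2}\ll (\log x)^{2C}.
\]
Summing this error over $d\mid q$ with $d\le M$ contributes $O(\tau(q)(\log x)^{2C})$.

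Finally, collecting main terms,
\[
\sum_{\substack{d\mid q\\ d\le M}}\mu(d)\cdot\frac{M}{d}\hat f(0)=M\hat f(0)\sum_{d\mid q}\frac{\mu(d)}{d}+O\Bigl(\sum_{\substack{d\mid q\\ d>M}}\frac{M}{d}\Bigr)=\frac{\phi(q)}{q}M\hat f(0)+O(\tau(q)),
\]
where the trailing error comes from completing the Möbius sum back to all divisors of $q$. (Under the normalization $\hat f(0)=1$ implicit in the statement, this is precisely $\frac{\phi(q)}{q}M$.) There is no substantial obstacle here; the only minor subtlety is choosing $j$ large enough in the integration-by-parts estimate to force the non-zero frequencies into the required error size, for which $j=2$ suffices exactly because the hypothesis controls derivatives up to arbitrary order polynomially in $\log x$.
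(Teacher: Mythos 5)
Your proof is correct and uses the standard route (M\"obius inversion to strip the coprimality condition, then Poisson summation on each smooth sub-sum with integration by parts to control the non-zero frequencies); the paper itself gives no internal argument, merely citing \cite[Lemma 12.6]{May1}, so there is nothing to compare against beyond confirming the approach is the expected one. You are also right to flag the $\hat f(0)$ factor: the printed statement omits it, but the way the lemma is invoked in the proof of Lemma \ref{lmm:DoubleDivisor} (where $\hat\psi_1(0)$ and $\hat\psi_2(0)$ explicitly appear in $\mathscr{K}_{MT}$) makes clear the intended main term is $\hat f(0)\frac{\phi(q)}{q}M$, so this is a small typo in the lemma statement rather than an implicit normalization.
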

\begin{proof}
This is \cite[Lemma 12.6]{May1}.
\end{proof}
%
%
%
%
%%%%%%%%%%%%%%%%%%%%%%%%%%%%%%%%%%%%%%%%%%%%%%%%%%%%%%%%%%
%
%
%
%
\begin{lmm}\label{lmm:SiegelWalfiszMaintain}
Let $C,B>0$ be constants and let $\alpha_n$ be a sequence satisfing the Siegel-Walfisz condition \eqref{eq:SiegelWalfisz}, supported on $n\le 2x$ with $P^-(n)\ge z_0=x^{1/(\log\log{x})^3}$ and satisfying $|\alpha_n|\le \tau(n)^B$. Then $\mathbf{1}_{\tau(n)\le (\log{x})^C}\alpha_n$ also satisfies the Siegel-Walfisz condition.
\end{lmm}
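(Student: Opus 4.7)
The plan is to decompose $\alpha_n = \alpha_n^{\text{low}} + \alpha_n^{\text{high}}$ with $\alpha_n^{\text{low}} := \mathbf{1}_{\tau(n) \le (\log x)^C}\alpha_n$ and $\alpha_n^{\text{high}} := \mathbf{1}_{\tau(n) > (\log x)^C}\alpha_n$. Since the Siegel-Walfisz condition is linear in $\alpha$, and $\alpha_n$ satisfies \eqref{eq:SiegelWalfisz} by hypothesis, it suffices to verify the condition for $\alpha_n^{\text{high}}$ and then take the difference. Note that $\alpha_n^{\text{high}}$ inherits the support constraint $P^-(n)\ge z_0$ and the bound $|\alpha_n^{\text{high}}|\le\tau(n)^B$.

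For $\alpha_n^{\text{high}}$, I would simply bound both the arithmetic-progression sum and the expected main term in absolute value. Using $1/\phi(q)\le 1$, both sides of the Siegel-Walfisz discrepancy are dominated by
\[
\sum_{\substack{n\sim N,\ P^-(n)\ge z_0\\ \tau(n)> (\log x)^C}}\tau(n)^B.
\]
Crucially, the target $N\tau(d)^{O(1)}/(\log N)^A$ has no division by $q$, so this trivial bound (forgetting $n\equiv a\Mod q$ and $(n,d)=1$) is admissible provided the displayed sum is $\ll_{A,B,C} N/(\log x)^A$.

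The key estimate is a Rankin-type bound that exploits the support condition. Because $z_0 = x^{1/(\log\log x)^3}$, every $n$ in the sum has $\Omega(n)\le 2(\log\log x)^3$, and since $\tau(n)\le 2^{\Omega(n)}$, the condition $\tau(n)>(\log x)^C$ forces $\Omega(n)>L := C\log\log x/\log 2$. Hence, for any $z\ge 1$,
\[
\sum_{\substack{n\sim N,\ P^-(n)\ge z_0\\ \Omega(n)>L}}\tau(n)^B\;\le\; z^{-L}\sum_{\substack{n\sim N\\ P^-(n)\ge z_0}}(2^{B}z)^{\Omega(n)}.
\]
The inner sum is controlled by a standard Euler product / Selberg-type estimate giving
\[
\sum_{n\sim N,\ P^-(n)\ge z_0}w^{\Omega(n)}\;\ll\; N\exp\Bigl((w-1)\sum_{z_0\le p\le N}\tfrac1p\Bigr)\;=\;N(\log\log x)^{O(w)},
\]
where the point is that $\sum_{z_0\le p\le N}1/p = \log\log N - \log\log z_0 + O(1) = 3\log\log\log x + O(1)$ is tiny. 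Choosing $z$ to be a sufficiently large constant depending on $A,B,C$ (specifically with $\log z\ge A(\log 2)/C$) makes $z^{-L}\le (\log x)^{-A}$, while the remaining factor $(\log\log x)^{O(2^{B}z)}$ is $(\log x)^{o(1)}$ and is absorbed.

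The main conceptual point, rather than a hard obstacle, is recognising that the support constraint $P^-(n)\ge z_0$ is essential: without it, the classical bound $\sum_{n\sim N}w^{\Omega(n)}\ll N(\log N)^{w-1}$ grows like $(\log x)^{w}$, and optimising Rankin would only yield a saving of $(\log x)^{-O(C\log C)}$, which is insufficient for arbitrary $A$. Replacing $\log\log x$ by $\log\log\log x$ in that growth, via the roughness condition, is exactly what lets $z$ be chosen independently of $x$ while still beating any fixed power of $\log x$.
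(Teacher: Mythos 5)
Your argument is correct and is essentially the standard proof: split off $\alpha_n^{\text{high}} = \mathbf{1}_{\tau(n)>(\log x)^C}\alpha_n$, bound its two Siegel-Walfisz contributions trivially (using $\phi(q)\ge 1$ so no division by $q$ is needed), and control $\sum_{n\sim N,\,P^-(n)\ge z_0,\,\tau(n)>(\log x)^C}\tau(n)^B$ by Rankin's trick, where the roughness condition $P^-(n)\ge z_0$ is exactly what makes $\sum_{z_0\le p\le N}1/p\ll\log\log\log x$ small enough that the Rankin parameter $z$ can be taken to be a constant depending only on $A,B,C$. This matches the approach of \cite[Lemma 12.7]{May1}; the only point you elide is the passage from $(\log x)^{-A}$ to $(\log N)^{-A}$, which is harmless since $N\le 2x$ gives $\log x\gg\log N$ and running the argument with $A+1$ absorbs the residual $(\log\log x)^{O(1)}$.
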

\begin{proof}
This is \cite[Lemma 12.7]{May1}.
\end{proof}
%
%
%
%
%%%%%%%%%%%%%%%%%%%%%%%%%%%%%%%%%%%%%%%%%%%%%%%%%%%%%%%%%%
%
%
%
%
\begin{lmm}[Most moduli have small square-full part]\label{lmm:Squarefree}
Let $\gamma_b,c_q$ be complex sequences satisfying $|\gamma_b|, |c_b|\le \tau(b)^{B_0}$ and recall $z_0:=x^{1/(\log\log{x})^3}$. Let $sq(n)$ denote the square-full part of $n$. (i.e. $sq(n)=\prod_{p:p^2|n}p^{\nu_p(n)}$). Then for every $A>0$ we have that
\[
\sum_{\substack{q\sim Q\\ sq(q)\ge z_0}}c_q\sum_{b\le  B}\gamma_b\Bigl(\mathbf{1}_{b\equiv a\Mod{q}}-\frac{\mathbf{1}_{(b,q)=1}}{\phi(q)}\Bigr)\ll_{A,B_0} \frac{x}{(\log{x})^A}.
\]
\end{lmm}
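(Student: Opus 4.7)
The plan is to estimate the sum by absolute values and exploit the fact that moduli $q \sim Q$ with $sq(q) \ge z_0$ are exceedingly sparse. The key quantitative input is the classical bound $\#\{s \text{ square-full} : s \le y\} \ll y^{1/2}$, which yields a saving of $z_0^{-1/2} = x^{-1/(2(\log\log x)^3)}$; since this decays faster than any fixed power of $\log x$, no nontrivial cancellation in $b$ or $q$ is required.

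First, I would handle the inner $b$-sum by the triangle inequality. Shiu's theorem (Lemma \ref{lmm:Divisor}) applied to $|\gamma_b| \le \tau(b)^{B_0}$ yields, for each $q \sim Q$ with $(a,q) = 1$,
\[
\Bigl|\sum_{b \le B} \gamma_b \Bigl(\mathbf{1}_{b \equiv a \Mod{q}} - \frac{\mathbf{1}_{(b,q) = 1}}{\phi(q)}\Bigr)\Bigr| \ll \frac{B\, \tau(q)^{O_{B_0}(1)} (\log x)^{O_{B_0}(1)}}{q},
\]
where one may assume $B \ll x$, since otherwise the target bound is vacuous in the context of the paper. Combined with $|c_q| \le \tau(q)^{B_0}$, it therefore suffices to show that, for every $A > 0$,
\[
\sum_{\substack{q \sim Q \\ sq(q) \ge z_0}} \frac{\tau(q)^{O(1)}}{q} \ll_A (\log x)^{-A}.
\]

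Next, I would factor each such $q$ uniquely as $q = s r$ with $s := sq(q)$ square-full, $r := q/s$ squarefree, and $(s,r) = 1$. Using $\tau(sr) = \tau(s)\tau(r)$ together with $\sum_{r \le Y}\tau(r)^C/r \ll (\log Y)^{O_C(1)}$, the $r$-summation costs only a power of $\log x$, so the problem reduces to
\[
\sum_{\substack{s \text{ square-full} \\ s \ge z_0}} \frac{\tau(s)^{O(1)}}{s} \ll z_0^{-1/2 + o(1)}.
\]
For this, I would absorb $\tau(s)^{O(1)} \ll_\eta s^{\eta}$ for arbitrarily small $\eta > 0$ and apply partial summation to the density bound for square-full integers, obtaining the tail estimate $\ll z_0^{-1/2+\eta}$. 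Since $z_0^{-1/2}$ beats every power of $\log x$, the required bound follows.

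The only real difficulty is bookkeeping: one must verify that the polylog losses from Shiu's bound, from replacing $\phi(q)$ with $q$, and from absorbing $\tau(s)^{O(1)}$ into $s^\eta$, are all comfortably swallowed by the super-polylog saving $z_0^{-1/2}$. No deeper arithmetic ingredient beyond Shiu's theorem and the standard density of square-full integers is needed.
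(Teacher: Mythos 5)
Your approach is the natural one and, as far as one can tell, is essentially the intended argument: the paper itself simply cites \cite[Lemma 12.9]{May1} for the proof, and the strategy there is the same in spirit, namely to exploit the sparsity of moduli with a large square-full part. Your key computation
\[
\sum_{\substack{s\ge z_0\\ s\text{ square-full}}}\frac{\tau(s)^{O(1)}}{s}\ll_\eta \sum_{\substack{s\ge z_0\\ s\text{ square-full}}} s^{\eta-1}\ll z_0^{\eta-1/2}
\]
for any fixed $\eta\in(0,1/2)$ is correct, and since $z_0^{1/2}=\exp(\tfrac12\log x/(\log\log x)^3)$ dominates any fixed power of $\log x$, the saving indeed swallows all polylogarithmic losses (from Shiu, from $q/\phi(q)$, and from the $r$-sum). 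The unique factorization $q=sr$ with $s=sq(q)$ square-full, $r$ squarefree, $(s,r)=1$ is also correct, so the reduction goes through.

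Two small points are glossed over. First, Lemma~\ref{lmm:Divisor} (Shiu) requires the interval length to satisfy $y\ge qx^\epsilon$, so your displayed bound on the inner $b$-sum needs $B\gg Qx^{\epsilon}$, which is not automatic. In the complementary range $B\ll Qx^\epsilon$ the congruence $b\equiv a\Mod q$ picks out $O(1)$ values of $b$, and the contribution is then $\ll (\log x)^{O(1)}\sum_{q\sim Q,\,sq(q)\ge z_0}\tau(q)^{O(1)}\ll Q(\log x)^{O(1)}z_0^{-1/4}$ by the same factorization $q=sr$, so there is no real obstacle, but the case split should be recorded. Second, you quietly restrict to $(a,q)=1$, whereas the lemma carries no such hypothesis; since $a$ is a fixed bounded integer one can split according to $d=\gcd(a,q)$, which ranges over $O(1)$ values, and the same argument applies, but again this deserves a word. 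With those additions the proof is complete.
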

\begin{proof}
This is \cite[Lemma 12.9]{May1}.
\end{proof}
%
%
%
%
%%%%%%%%%%%%%%%%%%%%%%%%%%%%%%%%%%%%%%%%%%%%%%%%%%%%%%%%%%
%
%
%
%
\begin{lmm}[Most moduli have small $z_0$-smooth part]\label{lmm:RoughModuli}
Let $Q<x^{1-\epsilon}$. Let $\gamma_b,c_q$ be complex sequences with $|\gamma_b|,|c_b|\le \tau(n)^{B_0}$ and recall $z_0:=x^{1/(\log\log{x})^3}$ and $y_0:=x^{1/\log\log{x}}$. Let $sm(n;z)$ denote the $z$-smooth part of $n$. (i.e. $sm(n;z)=\prod_{p\le z}p^{\nu_p(n)}$). Then for every $A>0$ we have that
\[
\sum_{\substack{q\sim Q\\ sm(q;z_0)\ge y_0}}c_q\sum_{b\le  x}\gamma_b\Bigl(\mathbf{1}_{b\equiv a\Mod{q}}-\frac{\mathbf{1}_{(b,q)=1}}{\phi(q)}\Bigr)\ll_{A,B_0} \frac{x}{(\log{x})^A}.
\]
\end{lmm}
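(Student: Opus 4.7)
The plan is to obtain the bound by a trivial pointwise estimate on the inner sum combined with the extreme rarity of integers whose $z_0$-smooth part is as large as $y_0$. No cancellation in the $b$- or $q$-averages is needed: all of the saving comes from the fact that the condition $sm(q;z_0)\ge y_0$ forces $q$ to have a very unusual factorization, and the count of such $q$, weighted by $1/q$, is much smaller than any fixed power of $\log{x}$.

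First I would bound the inner sum for each individual $q\sim Q$ by a Shiu-type divisor-sum estimate in the spirit of Lemma~\ref{lmm:Divisor}. Splitting the difference into its two pieces and using $|\gamma_b|\le\tau(b)^{B_0}$, together with the standard bound $\phi(q)\gg q/\log\log{x}$, yields
\[
\Bigl|\sum_{b\le x}\gamma_b\Bigl(\mathbf{1}_{b\equiv a\Mod{q}}-\frac{\mathbf{1}_{(b,q)=1}}{\phi(q)}\Bigr)\Bigr|\ll\frac{x\,\tau(q)^{O(1)}(\log{x})^{O(B_0)}}{q}.
\]
Inserting $|c_q|\le\tau(q)^{B_0}$ and decomposing $q=q_1 q_2$ with $q_1:=sm(q;z_0)\ge y_0$ the $z_0$-smooth part of $q$ and $q_2$ the $z_0$-rough part (so $(q_1,q_2)=1$), the sum over $q_2\sim Q/q_1$ of $\tau(q_2)^{O(1)}/q_2$ is $\ll(\log{x})^{O(1)}$, leaving the residual expression
\[
x(\log{x})^{O(B_0)}\sum_{\substack{y_0\le q_1\le 2Q\\ q_1\text{ is }z_0\text{-smooth}}}\frac{1}{q_1}.
\]

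The remaining step is to show this residual smooth-number sum is smaller than every fixed power of $\log{x}$. By partial summation together with the standard de~Bruijn estimate that the number of $z_0$-smooth integers up to $t$ is $\ll t\,\rho(\log{t}/\log{z_0})$, where $\rho$ denotes the Dickman function, the sum is dominated by $(\log{x})\,\rho(\log{y_0}/\log{z_0})=(\log{x})\,\rho((\log\log{x})^2)$. The classical asymptotic $\rho(u)=\exp(-(1+o(1))u\log{u})$ then gives $\rho((\log\log{x})^2)=\exp(-\Omega((\log\log{x})^2\log\log\log{x}))$, which is $\ll_A(\log{x})^{-A}$ for every fixed $A>0$. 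Absorbing the $(\log{x})^{O(B_0)}$ factor by increasing $A$ accordingly yields the claimed bound.

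The only step requiring any thought is the Dickman estimate in the last paragraph, and even this is very comfortable: the parameters $z_0=x^{1/(\log\log{x})^3}$ and $y_0=x^{1/\log\log{x}}$ are arranged precisely so that $\log{y_0}/\log{z_0}=(\log\log{x})^2$ is large enough to push the $\rho$-saving past any fixed power of $\log{x}$ with considerable margin. Beyond Shiu's theorem and this smooth-number count, no further arithmetic input is needed; in particular no character sum estimates, factorization arguments, or sieve-theoretic manoeuvres appear, which is why Lemma~\ref{lmm:RoughModuli} (like its companion Lemma~\ref{lmm:Squarefree}) can be proved by entirely soft methods.
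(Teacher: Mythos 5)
Your approach is the right one and, as far as I can tell, is exactly what underlies the cited proof in \cite{May1}: a pointwise Shiu-type bound on the inner sum combined with the extreme rarity of $q$ whose $z_0$-smooth part exceeds $y_0$. The gap--rarity tradeoff is set up precisely so that the saving, which is of order $\exp(-c(\log\log x)^2)$ or better, dominates every fixed power of $\log x$.

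The one technical gap worth flagging: after you insert $|c_q|\le\tau(q)^{B_0}$ and the Shiu bound $\ll x\tau(q)^{O(1)}(\log x)^{O(B_0)}/q$, the factor $\tau(q)^{O(B_0)}=\tau(q_1)^{O(B_0)}\tau(q_2)^{O(B_0)}$ splits under $q=q_1q_2$, and you correctly absorb $\tau(q_2)^{O(B_0)}$ into the $q_2$-sum but then silently drop $\tau(q_1)^{O(B_0)}$ from the residual; the displayed sum should really be
\[
x(\log x)^{O(B_0)}\sum_{\substack{y_0\le q_1\le 2Q\\ q_1\text{ is }z_0\text{-smooth}}}\frac{\tau(q_1)^{O(B_0)}}{q_1}.
\]
Since $q_1$ is forced to have at least $\log y_0/\log z_0=(\log\log x)^2$ prime factors counted with multiplicity, $\tau(q_1)$ is typically \emph{large} here, so this is not a factor one can wave away. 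The cleanest fix is to bypass de~Bruijn/Dickman altogether and use Rankin's trick directly: with $\sigma=1/\log z_0$ one has
\[
\sum_{\substack{q_1\ge y_0\\ q_1\ z_0\text{-smooth}}}\frac{\tau(q_1)^{C}}{q_1}\le y_0^{-\sigma}\prod_{p\le z_0}\Bigl(1+\frac{2^{C}}{p^{1-\sigma}}+O\bigl(p^{-2(1-\sigma)}\bigr)\Bigr)\ll(\log x)^{O_{C}(1)}\exp\bigl(-(\log\log x)^2\bigr),
\]
which is still $\ll_A(\log x)^{-A}$ for every $A$ and handles the divisor weight with no extra effort. With this correction the argument is complete; otherwise your write-up agrees with the paper's route.
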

\begin{proof}
This is \cite[Lemma 12.10]{May1}.
\end{proof}
%
%
%
%
%%%%%%%%%%%%%%%%%%%%%%%%%%%%%%%%%%%%%%%%%%%%%%%%%%%%%%%%%%
%
%
%
%
\begin{prpstn}[Reduction to exponential sums]\label{prpstn:GeneralDispersion}
Let $\alpha_n,\beta_m,\gamma_{q,d},\lambda_{q,d,r}$ be complex sequences with $|\alpha_n|,|\beta_n|\le \tau(n)^{B_0}$ and $|\gamma_{q,d}|\le \tau(q d)^{B_0}$ and $|\lambda_{q,d,r}|\le \tau(q d r)^{B_0}$. Let $\alpha_n$ and $\lambda_{q,d,r}$ be supported on integers with $P^-(n)\ge z_0$ and $P^-(r)\ge z_0$, and let $\alpha_n$ satisfy the Siegel-Walfisz condition \eqref{eq:SiegelWalfisz}. Let
\[
\mathscr{S}:=\sum_{\substack{d\sim D\\ (d,a)=1}}\sum_{\substack{q\sim Q\\ (q,a)=1}}\sum_{\substack{r\sim R\\ (r,a)=1}}\lambda_{q,d,r}\gamma_{q,d}\sum_{m\sim M}\beta_m\sum_{n\sim N}\alpha_n\Bigl(\mathbf{1}_{m n\equiv a\Mod{q r d}}-\frac{\mathbf{1}_{(m n,q r d)=1}}{\phi(q r d)}\Bigr).
\]
Let $A>0$ and $C=C(A,B_0)$ be sufficiently large in terms of $A,B_0$, and let $N,M$ satisfy
\[
N>Q D (\log{x})^{C},\qquad M>(\log{x})^C.
\]
Then we have
\[
|\mathscr{S}|\ll_{A,B_0} \frac{x}{(\log{x})^A}+M D^{1/2}Q^{1/2}(\log{x})^{O_{B_0}(1)}\Bigl(|\mathscr{E}_1|^{1/2}+|\mathscr{E}_2|^{1/2}\Bigr),
\]
where
\begin{align*}
\mathscr{E}_{1}&:=\sum_{\substack{q\\ (q,a)=1}}\sum_{\substack{d\sim D\\ (d,a)=1}}\sum_{\substack{r_1,r_2\sim R\\ (r_1r_2,a)=1}}\psi_0\Bigl(\frac{q}{Q}\Bigr)\frac{\lambda_{q,d,r_1}\overline{\lambda_{q,d,r_2}} }{\phi(q d r_2)q d r_1}\sum_{\substack{n_1,n_2\sim N\\ (n_1,q d r_1)=1\\(n_2,q d  r_2)=1}}\alpha_{n_1}\overline{\alpha_{n_2}}\\
&\qquad \times\sum_{1\le |h|\le H_1}\hat{\psi}_0\Bigl(\frac{h M}{q d r_1}\Bigr)e\Bigl( \frac{a h \overline{ n_1}}{q d r_1}\Bigr),\\
\mathscr{E}_2&:=\sum_{\substack{q\\ (q,a)=1}}\psi_0\Bigl(\frac{q}{Q}\Bigr)\sum_{\substack{d\sim D\\ (d,a)=1}}\sum_{\substack{r_1,r_2\sim R\\ (r_1,a r_2)=1\\ (r_2,a q d r_1)=1}}\frac{\lambda_{q,d,r_1}\overline{\lambda_{q,d,r_2}}}{q d r_1 r_2}\sum_{\substack{n_1,n_2\sim N\\ n_1\equiv n_2\Mod{q d}\\ (n_1,n_2 q d r_1)=1\\(n_2,n_1 q d r_2)=1\\ |n_1-n_2|\ge N/(\log{x})^C}}\alpha_{n_1}\overline{\alpha_{n_2}}\\
&\qquad \times\sum_{1\le |h|\le H_2}\hat{\psi}_0\Bigl(\frac{h M}{q d r_1 r_2}\Bigr)e\Bigl(\frac{ah\overline{n_1r_2}}{q d r_1}+\frac{ah\overline{n_2 q d r_1}}{r_2}\Bigr),\\
H_1&:=\frac{Q D R}{M}\log^5{x},\\
H_2&:=\frac{Q D R^2}{M}\log^5{x}.
\end{align*}
\end{prpstn}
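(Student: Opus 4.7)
The plan is to follow the Linnik dispersion method of Bombieri--Friedlander--Iwaniec, now adapted to the three-factor modulus $qdr$. First, introduce smooth majorants $\psi_0(q/Q)$, $\psi_0(d/D)$, $\psi_0(m/M)$ (the hypotheses $N>QD(\log x)^C$ and $M>(\log x)^C$ guarantee these are effective), and apply Cauchy--Schwarz in the variables $(q,d,m)$, with $\beta_m\gamma_{q,d}$ on one side and the inner quantity $\sum_r \lambda_{q,d,r}\sum_n \alpha_n(\mathbf{1}_{mn\equiv a\Mod{qdr}}-\text{MT})$ on the other. Divisor bounds on $\beta_m,\gamma_{q,d}$ yield $|\mathscr{S}|^2\ll QDM(\log x)^{O(1)}\cdot \Sigma$, where $\Sigma$ is obtained by expanding the inner square and summing over $m$. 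Expanding, one writes the resulting $m$-sum as $V=V_{11}-V_{10}-V_{01}+V_{00}$, where $V_{11}$ contains both congruence indicators $\mathbf{1}_{m n_i\equiv a\Mod{qdr_i}}$, $V_{10}, V_{01}$ mix one indicator with one main term $1/\phi(qdr_i)$, and $V_{00}$ contains both main terms.

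Before the Poisson step, Lemmas \ref{lmm:Squarefree} and \ref{lmm:RoughModuli} allow one to restrict attention to moduli $qdr_1r_2$ with small square-full and $z_0$-smooth part; in particular this forces $(r_1,r_2)=1$ outside a negligible contribution, and Lemma \ref{lmm:SiegelWalfiszMaintain} preserves the Siegel--Walfisz property of $\alpha_n$ under these truncations. Then apply Lemma \ref{lmm:Completion} (Poisson summation) to $V_{11}, V_{10}, V_{01}$, splitting each into an $h=0$ main piece and an $h\neq0$ oscillatory piece; for $V_{00}$ the smooth $m$-sum is directly evaluated by Lemma \ref{lmm:TrivialCompletion}. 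The critical analytic step is to check that the three $h=0$ main pieces, together with $V_{00}$, combine and (after averaging $\alpha_{n_1}\overline{\alpha_{n_2}}$ by Siegel--Walfisz against the appropriate $1/\phi$-densities) cancel to an overall contribution of size $\ll x^2/(\log x)^{2A}$.

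The remaining $h\neq 0$ terms produce the two exponential sums of the proposition. The $V_{10}^{(h\neq 0)}$ and $V_{01}^{(h\neq 0)}$ contributions, which are related by the symmetry $(r_1,n_1)\leftrightarrow(r_2,n_2)$ together with complex conjugation, correspond to Poisson modulo $qdr_1$ (from the surviving $\mathbf{1}_{mn_1\equiv a\Mod{qdr_1}}$) while the $r_2$ side is absorbed into the coefficient $1/\phi(qdr_2)$; collecting these matches exactly the shape of $\mathscr{E}_1$. The $V_{11}^{(h\neq 0)}$ contribution corresponds to joint Poisson modulo $qdr_1r_2$: the two congruences are compatible only when $n_1\equiv n_2\Mod{qd}$ (using $(r_1,r_2)=1$ and $(a,qd)=1$), and then Bezout in the form $1/(qdr_1r_2)=\overline{r_2}/(qdr_1)+\overline{qdr_1}/r_2$ produces the displayed phase of $\mathscr{E}_2$. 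The very near-diagonal range $|n_1-n_2|<N/(\log x)^C$ (including $n_1=n_2$) is handled separately by Lemma \ref{lmm:Divisor} and is absorbed into the error.

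The main obstacle is the bookkeeping at the main-term cancellation. One has to keep careful track of the gcd conditions between $qd, r_1, r_2, n_1, n_2, a$, verify that the various coprimality defects contribute only negligibly (which is precisely why Lemmas \ref{lmm:Squarefree}, \ref{lmm:RoughModuli}, and \ref{lmm:SiegelWalfiszMaintain} have been set up in advance), and invoke the Siegel--Walfisz hypothesis on $\alpha_n$ to push the residual discrepancy below $x^2/(\log x)^{2A}$. Once this cancellation is secured, identifying the surviving $h\neq 0$ Fourier contributions with $\mathscr{E}_1$ and $\mathscr{E}_2$ is a routine CRT + Bezout manipulation.
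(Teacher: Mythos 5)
Your outline is the dispersion-method argument of Bombieri--Friedlander--Iwaniec adapted to the three-factor modulus $qdr$, and it matches the intended proof: the paper itself does not prove this proposition in-text but simply cites it as Proposition 13.4 of the companion paper (with $E=1$), where exactly this scheme — Cauchy--Schwarz in $(q,d,m)$ after smoothing, expansion into the four pieces $V_{11},V_{10},V_{01},V_{00}$, Poisson summation in $m$, Siegel--Walfisz cancellation of the zero-frequency main terms, and CRT/Bezout to produce the displayed phases of $\mathscr{E}_1$ (from $V_{10},V_{01}$) and $\mathscr{E}_2$ (from $V_{11}$) — is carried out, with the near-diagonal $|n_1-n_2|<N/(\log x)^C$ range and the $(r_1,r_2)>1$ terms absorbed into the error using the hypothesis $N>QD(\log x)^C$ and $P^-(r)\ge z_0$. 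Your sketch is a correct reconstruction of that argument.
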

\begin{proof}
This is \cite[Proposition 13.4]{May1} with $E=1$.
\end{proof}
%
%
%
%
%%%%%%%%%%%%%%%%%%%%%%%%%%%%%%%%%%%%%%%%%%%%%%%%%%%%%%%%%%
%
%
%
%
\begin{lmm}[Simplification of exponential sum]\label{lmm:Simplification}
Let $N,M,Q,R \le x$ with $NM\asymp x$ and 
\begin{align}
Q R&<x^{2/3},\label{eq:CrudeSize}\\
Q R^2&< M x^{1-2\epsilon}.\label{eq:CrudeSize2}
\end{align}
Let $\lambda_{q,r}$ and $\alpha_n$ be complex sequences supported on $P^-(n),P^-(r)\ge z_0$ with $|\lambda_{q,r}|\le \tau(qr)^{B_0}$ and $|\alpha_n|\le \tau(n)^{B_0}$. Let $H:=\frac{Q R^2}{M}\log^5{x}$ and let
\begin{align*}
\mathscr{E}&:=\sum_{\substack{(q,a)=1}}\psi_0\Bigl(\frac{q}{Q}\Bigr)\sum_{\substack{r_1,r_2\sim R\\ (r_1,a r_2)=1\\ (r_2,a q r_2)=1}}\frac{\lambda_{q,r_1}\overline{\lambda_{q,r_2}}}{q r_1 r_2}\sum_{\substack{n_1,n_2\sim N\\ n_1\equiv n_2\Mod{q}\\ (n_1,n_2qr_1)=1\\(n_2,n_1qr_2)=1\\ |n_1-n_2|\ge N/(\log{x})^C}}\alpha_{n_1}\overline{\alpha_{n_2}}\\
&\qquad\qquad \times\sum_{1\le |h|\le H}\hat{\psi}_0\Bigl(\frac{h M}{q r_1 r_2}\Bigr)e\Bigl(\frac{ah\overline{n_1 r_2}}{q r_1}+\frac{ah\overline{n_2 q r_1}}{r_2}\Bigr).
\end{align*}
Then we have (uniformly in $C$)
\[
\mathscr{E}\ll_{B_0}\exp((\log\log{x})^5)\sup_{\substack{H'\le H\\ Q'\le 2Q\\ R_1,R_2\le 2R}}|\mathscr{E}'|+\frac{N^2}{Qx^\epsilon},
\]
where
\[
\mathscr{E}'=\sum_{\substack{Q\le q\le Q'\\ (q,a)=1}}\sum_{\substack{R\le r_1\le  R_1\\ R\le r_2\le R_2\\ (r_1a r_2)=1\\ (r_2,a q r_1)=1}}\frac{\lambda_{q,r_1}\overline{\lambda_{q,r_2}}}{q r_1 r_2}\sum_{\substack{n_1,n_2\sim N\\ n_1\equiv n_2\Mod{q}\\ (n_1,qr_1n_2)=1\\ (n_2,qr_2n_1)=1\\ (n_1r_2,n_2)\in\mathcal{N}\\ |n_1-n_2|\ge N/(\log{x})^C}}\alpha_{n_1}\overline{\alpha_{n_2}}\sum_{1\le |h| \le H'} e\Bigl(\frac{ ah\overline{n_2 q r_1}(n_1-n_2)}{n_1 r_2}\Bigr),
\]
and $\mathcal{N}$ is a set with the property that if $(a,b)\in\mathcal{N}$ and $(a',b')\in\mathcal{N}$ then we have $\gcd(a,b')=\gcd(a',b)=1$.
\end{lmm}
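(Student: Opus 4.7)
The plan is to transform $\mathscr{E}$ into the form of $\mathscr{E}'$ in three stages: (i) dissection of the smooth weights into sharp cut-offs, (ii) reciprocity on the phase to collapse it to a single fraction, and (iii) a dyadic decomposition that enforces the cross-coprimality structure of $\mathcal{N}$.

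First, I would remove the smooth weights $\psi_0(q/Q)$ and $\hat{\psi}_0(hM/(qr_1r_2))$. Since $\psi_0\in C_c^\infty$ is supported on $[1/2,5/2]$, the identity
\[
\psi_0(q/Q)=-\int_{Q/2}^{5Q/2}\psi_0'(t/Q)\mathbf{1}_{q\le t}\,\frac{dt}{Q}
\]
expresses the smooth weight as a weighted integral of sharp indicators; the analogous identity for $\hat{\psi}_0$ (together with its rapid decay, which lets me truncate at $|h|\le H$ with negligible error) allows me to replace all smooth weights by sharp cut-offs in the indicator variables $Q',R_1,R_2,H'$, at the cost of passing to a supremum and losing a factor $(\log x)^{O(1)}$ that is absorbed into $\exp((\log\log x)^5)$.

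Second, I would collapse the phase by reciprocity. The relation $\overline{A}/B+\overline{B}/A\equiv 1/(AB)\pmod 1$, valid for coprime $A,B$, applied to the pairs $(n_1r_2,qr_1)$ and $(n_2qr_1,r_2)$ (whose coprimality is guaranteed by the support of the summand), gives
\[
\frac{ah\overline{n_1r_2}}{qr_1}+\frac{ah\overline{n_2qr_1}}{r_2}\equiv -\frac{ah\overline{qr_1}}{n_1r_2}-\frac{ah\overline{r_2}}{n_2qr_1}+O\Bigl(\tfrac{H}{NQR^2}\Bigr)\pmod 1,
\]
so the reciprocity correction is $O(\log^5 x/x)$ in the phase. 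Taylor-expanding $e(\cdot)$ and bounding trivially, this correction contributes at most $N^2/(Qx^\epsilon)$, where the hypotheses \eqref{eq:CrudeSize} and \eqref{eq:CrudeSize2} are calibrated so that this budget just closes. Splitting $\overline{r_2}/(n_2qr_1)$ by CRT across the coprime decomposition $n_2qr_1=n_2\cdot qr_1$ and using the congruence $n_1\equiv n_2\pmod q$ to recombine the pieces with denominator $qr_1$, the two remaining reciprocal terms collapse (mod $1$) into the single fraction $ah\overline{n_2qr_1}(n_1-n_2)/(n_1r_2)$ of $\mathscr{E}'$.

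Third, I would produce the set $\mathcal{N}$ by a dyadic decomposition of the pairs $(n_1r_2,n_2)$ according to prime support: for each dyadic range of primes in $[z_0,x]$, I decide which of $n_1,r_2,n_2$ contains a prime in that range. This yields at most $\exp((\log\log x)^5)$ buckets, on each of which the prime supports of $n_1r_2$ and of $n_2$ are disjoint, so the cross-coprimality property of $\mathcal{N}$ holds automatically. Supremizing over the buckets together with the sharp cut-offs from Step 1 gives the claimed bound.

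The main obstacle is the reciprocity step: the $O(1/x)$ phase correction, after expansion of $e(\cdot)$ and summation over $\asymp N^2HR^2/Q$ terms carrying $1$-bounded weights, must fit inside the advertised error $N^2/(Qx^\epsilon)$, which is exactly why the crude size hypotheses \eqref{eq:CrudeSize}--\eqref{eq:CrudeSize2} are imposed. The combinatorial decomposition producing $\mathcal{N}$ is more routine, but requires careful bookkeeping so that the number of sub-cases stays within $\exp((\log\log x)^5)$.
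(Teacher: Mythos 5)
The paper does not actually give a proof of this lemma; it cites \cite[Lemma 13.5]{May1}, so there is no internal argument to compare against. Evaluated on its own terms, your outline correctly identifies the two main ideas --- dissection of the smooth weights and the reciprocity collapse of the phase --- and the reciprocity computation is essentially right: after clearing the two $O(\log^5 x/x)$ terms coming from $\overline{A}/B + \overline{B}/A = 1/AB$, a CRT decomposition of $\overline{q r_1}/(n_1 r_2)$ and $\overline{r_2}/(n_2 q r_1)$ and the congruence $n_1 \equiv n_2 \pmod q$ do collapse everything to $ah\overline{n_2 q r_1}(n_1-n_2)/(n_1 r_2) \pmod 1$, and hypothesis \eqref{eq:CrudeSize2} is indeed what is needed for the trivially-bounded correction to fit inside $N^2/(Q x^\epsilon)$.

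There are, however, two genuine gaps. First, the weight $\hat\psi_0(hM/(q r_1 r_2))$ ties $h$, $q$, $r_1$, $r_2$ together, and a single integration-by-parts identity does not decouple them: it produces a cutoff of the form $h \le t q r_1 r_2 / M$ whose endpoint depends on $q,r_1,r_2$, whereas $\mathscr{E}'$ has a \emph{single} scalar $H'$ independent of those variables. You need an extra separation-of-variables device here (and a naive dyadic slicing of $q r_1 r_2$ does not supply an error as small as $N^2/(Q x^\epsilon)$, so this needs care). Second, the construction of $\mathcal{N}$ is mis-counted: deciding, for each of the $\asymp \log x$ dyadic prime ranges in $[z_0,x]$, which of $n_1$, $r_2$, $n_2$ gets a prime in that range gives $\gg 3^{\log x}$ buckets, enormously larger than $\exp((\log\log x)^5)$. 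The count only becomes manageable after invoking $\omega(n) \ll (\log\log x)^3$ (from $P^-\ge z_0$) to restrict to the few \emph{active} ranges, and even then one must deal with the possibility that $n_1 r_2$ and $n_2$ each contain (distinct) primes from the \emph{same} dyadic range, which your buckets do not separate and hence do not automatically have the cross-coprimality property required of $\mathcal{N}$. Both steps would need to be reworked before this could be called a proof.
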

\begin{proof}
This is \cite[Lemma 13.5]{May1}.
\end{proof}
%
%
%
%
%%%%%%%%%%%%%%%%%%%%%%%%%%%%%%%%%%%%%%%%%%%%%%%%%%%%%%%%%%
%
%
%
%
\begin{lmm}[Second exponential sum estimate]\label{lmm:BFI2}
Let
\begin{align}
D R N^{3/2}&< x^{1-2\epsilon},\\
Q D R&< x^{1-2\epsilon}.
\end{align}
Let $\alpha_n$, $\lambda_{d,r}$ be complex sequences with $|\lambda_{d,r}|,|\alpha_n|\le x^{o(1)}$. Let $H_1:=N Q D R(\log{x})^5/x$ and let 
\[
\widetilde{\mathscr{B}}:=\sum_{\substack{q\\ (q,a)=1}}\sum_{\substack{d\sim D\\ (d,a)=1}}\sum_{\substack{r_1,r_2\sim R\\ (r_1r_2,a)=1}}\psi_0\Bigl(\frac{q}{Q}\Bigr)\frac{\lambda_{d,r_1}\overline{\lambda_{d,r_2}} }{\phi(q d r_2)q d r_1}\sum_{\substack{n_1,n_2\sim N\\ (n_1,q d r_1)=1\\(n_2,q d  r_2)=1}}\alpha_{n_1}\overline{\alpha_{n_2}}\sum_{1\le |h|\le H_1}\hat{\psi}_0\Bigl(\frac{h M}{q d r_1}\Bigr)e\Bigl( \frac{a h \overline{ n_1}}{q d r_1}\Bigr)
\]
Then we have
\[
\widetilde{\mathscr{B}}\ll\frac{N^2}{Q D x^\epsilon}.
\]
\end{lmm}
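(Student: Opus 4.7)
The defining feature of $\widetilde{\mathscr{B}}$ that distinguishes it from the second exponential sum $\mathscr{E}_2$ in Proposition \ref{prpstn:GeneralDispersion} is that the phase $e(ah\overline{n_1}/(qdr_1))$ depends only on $(q,d,r_1,n_1,h)$ and is independent of $n_2$ and $r_2$. This places $\widetilde{\mathscr{B}}$ in the \emph{first-type} regime where classical Weil bounds for Kloosterman sums should suffice, rather than requiring the Kuznetsov/Deshouillers--Iwaniec machinery needed for $\mathscr{E}_2$. The claimed bound $N^2/(QDx^\epsilon)$ is much weaker than the trivial one for this reason, and the proof should be essentially a self-contained classical argument.

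My first step is to decouple the $n_2, r_2$ sums by writing
\[
\widetilde{\mathscr{B}} = \sum_{\substack{q \sim Q,\, d \sim D \\ (qd, a)=1}} \psi_0(q/Q)\, F(q,d)\, G(q,d),
\]
where
\[
F(q,d) := \sum_{\substack{r_2 \sim R \\ (r_2, a)=1}} \frac{\overline{\lambda_{d,r_2}}}{\phi(qdr_2)} \sum_{\substack{n_2 \sim N \\ (n_2, qdr_2)=1}} \overline{\alpha_{n_2}}
\]
carries no oscillation and $G(q,d)$ is the remaining sum in $r_1, n_1, h$. The triangle inequality combined with $|\alpha|,|\lambda| \le x^{o(1)}$ and the elementary lower bound $\phi(k) \gg k/\log\log x$ gives $|F(q,d)| \ll (NR/(QD))\, x^{o(1)}$.

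The remaining task is to show $\sum_{q,d} |G(q,d)| \ll NQD/(R x^\epsilon)$. I would apply Cauchy--Schwarz in the outer variables $(q, d, r_1, h)$, pulling the $n_1$ sum inside absolute values and squaring. Opening the resulting square in $(n_1, n_1')$, the diagonal $n_1 = n_1'$ is handled by Lemma \ref{lmm:Divisor}, while the off-diagonal produces the complete Kloosterman sum
\[
\sum_{n \bmod qdr_1} e\Big( \frac{ah(\overline{n}-\overline{n'})}{qdr_1}\Big),
\]
bounded by Weil's inequality as $\ll (qdr_1)^{1/2+\epsilon}(h, n_1-n_1', qdr_1)^{1/2}$. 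After summing the Weil estimate over the remaining variables, the hypothesis $QDR < x^{1-2\epsilon}$ keeps the Kloosterman modulus from growing too large, while $DRN^{3/2} < x^{1-2\epsilon}$ absorbs the $N^{3/2}$ that arises from the bilinear off-diagonal sum. Together these yield $\widetilde{\mathscr{B}} \ll N^2/(QD x^\epsilon)$ after reintroducing the $NR/(QD)$ factor from the $F(q,d)$ part.

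The main technical obstacle is tracking the gcd contributions $(h, n_1-n_1', qdr_1)^{1/2}$ after summation, and separating out exceptional moduli $qdr_1$ for which the Kloosterman sum degenerates (for instance when the square-full part of the modulus is too large). The hypotheses $P^-(r) \ge z_0$ and $P^-(n) \ge z_0$, combined with Lemma \ref{lmm:Squarefree} and Lemma \ref{lmm:RoughModuli}, let one restrict to generic $qdr_1$ that is essentially squarefree and $z_0$-rough, where the Weil bound applies with negligible gcd loss. A secondary subtlety is that $H_1 = NQDR (\log x)^5/x$ is just past the natural Poisson threshold $qdr_1/M$, so the tail of the $h$-sum where $\hat\psi_0(hM/(qdr_1))$ is small must be handled via the rapid decay of $\hat\psi_0$, but this is routine.
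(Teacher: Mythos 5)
Your structural observation is correct and is the right starting point: the phase in $\widetilde{\mathscr{B}}$ is independent of $n_2$ and $r_2$, so those sums decouple, and one expects Weil--Kloosterman machinery (rather than Deshouillers--Iwaniec) to suffice. But the execution has several concrete gaps. First, a bookkeeping slip: $F(q,d)\ll x^{o(1)}N/(QD)$, not $NR/(QD)$ (the $r_2$-sum contributes $R$ upstairs and $R$ in $\phi(qdr_2)$, which cancel), and correspondingly you need $\sum_{q,d}|G(q,d)|\ll N x^{-\epsilon}$, not $NQD/(Rx^\epsilon)$. Second, and decisively, the Cauchy--Schwarz arrangement you propose — averaging over $(q,d,r_1,h)$ and squaring the $n_1$-sum — produces a diagonal $n_1=n_1'$ whose contribution to your bound is already too large: the normalization factor is $\sum_{q,d,r_1,h}|W|\ll H_1 x^{o(1)}$ and the diagonal of the squared factor is $\ll H_1 N x^{o(1)}$, so the Cauchy--Schwarz gives $\sum_{q,d}|G(q,d)|\ll H_1 N^{1/2}x^{o(1)}=N^{3/2}QDR\,x^{o(1)-1}$; to match the target $Nx^{-\epsilon}$ one needs $QDRN^{1/2}<x^{1-\epsilon}$. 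Neither $QDR<x^{1-2\epsilon}$ nor $DRN^{3/2}<x^{1-2\epsilon}$ implies this without an extra hypothesis such as $Q\ll N$, which the lemma does not state. The appearance of $N^{3/2}$ in the hypothesis is the signature of a different Cauchy--Schwarz arrangement, where one squares the $(r_1,h)$-sum against the $n_1$-average and the off-diagonal Ramanujan-type sums have modulus $qd[r_1,r_1']\approx QDR^2$.

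Two further issues. The object you write as a complete Kloosterman sum, $\sum_{n\bmod qdr_1}e\bigl(ah(\overline n-\overline{n'})/(qdr_1)\bigr)$, is not one: with $n'$ fixed it splits into two Ramanujan sums, each $\ll(h,qdr_1)$, and the Weil bound $(qdr_1)^{1/2+\epsilon}(h,n_1-n_1',qdr_1)^{1/2}$ does not apply to it — that shape of bound arises only if $n$ and $\overline n$ both appear in the phase, i.e.\ from a genuine $\Kl$-type sum, which your Cauchy--Schwarz does not produce. Finally, you invoke the support conditions $P^-(r),P^-(n)\ge z_0$ together with Lemma \ref{lmm:Squarefree} and Lemma \ref{lmm:RoughModuli}, but Lemma \ref{lmm:BFI2} does not assume $z_0$-roughness of $r$ or $n$, and those two lemmas are stated for the original $\Delta(q)$-shaped error sums before Poisson summation, not for the post-Poisson dispersion quantity $\widetilde{\mathscr{B}}$; the "restrict to generic squarefree $qdr_1$" step is not available here. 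These gaps mean the proof as proposed does not close, even though the high-level plan (decouple $n_2,r_2$; Cauchy--Schwarz; Weil) is the right one.
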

\begin{proof}
This follows from the same argument used to prove \cite[Lemma 17.3]{May1}.
\end{proof}
%
%
%
%
%%%%%%%%%%%%%%%%%%%%%%%%%%%%%%%%%%%%%%%%%%%%%%%%%%%%%%%%%%
%
%
%
%
\begin{lmm}[Reduction to smoothed sums]\label{lmm:SmoothReduction}
Let $N\ge x^\epsilon$ and $z\le z_0$ and let $\alpha_m$, $c_q$ be 1-bounded complex sequences.

Imagine that for every choice of $N',D,A,C>0$ with $N' D\asymp N$ and $D\le y_0$, and every smooth function $f$ supported on $[1/2,5/2]$ satisfying $f^{(j)}\ll_j (\log{x})^{C j}$, and for every $1$-bounded complex sequence $\beta_d$ we have the estimate
\[
\sum_{q\sim Q} c_q\sum_{m\sim M}\alpha_m\sum_{d\sim D}\beta_d\sum_{n'}f\Bigl(\frac{n'}{N'}\Bigr)\Bigl(\mathbf{1}_{m n' d\equiv a\Mod{q}}-\frac{\mathbf{1}_{(m n' d,q)=1}}{\phi(q)}\Bigr)\ll_{A,C} \frac{x}{(\log{x})^A}.
\]
Then for any $B>0$ and every interval $\mathcal{I}\subseteq [N,2N]$ we have
\[
\sum_{q\sim Q}c_q \sum_{m\sim M}\alpha_m\sum_{\substack{n\in\mathcal{I}\\ P^-(n)>z}}\Bigl(\mathbf{1}_{mn\equiv a\Mod{q}}-\frac{\mathbf{1}_{(m n,q)=1}}{\phi(q)}\Bigr)\ll_{B} \frac{x}{(\log{x})^B}.
\]
\end{lmm}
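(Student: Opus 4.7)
My plan is to reduce the sharp, sieved sum of the conclusion to the smooth convolution sums of the hypothesis in three stages: (i) a fundamental-lemma sieve to eliminate the condition $P^-(n) > z$; (ii) a substitution $n = dn'$ followed by a dyadic and then a \emph{narrow} decomposition of $d$; and (iii) a smooth approximation of $\mathbf{1}_\mathcal{I}$. The hypothesis is applied on each narrow piece, and the various errors are absorbed by taking the smoothing and subdivision parameters sufficiently large in terms of $B$.

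First I would apply the Rosser--Iwaniec upper and lower sieves $\lambda^\pm_d$ of level $y_0$, supported on squarefree $d \mid P(z)$, so that $\sum_d \lambda^-_d \mathbf{1}_{d \mid n} \le \mathbf{1}_{P^-(n) > z} \le \sum_d \lambda^+_d \mathbf{1}_{d \mid n}$ with $|\lambda^\pm_d| \le 1$. Since $z \le z_0$, the sifting level $\log y_0/\log z \ge (\log\log x)^2$ is huge and the fundamental lemma yields $\sum_d (\lambda^+_d - \lambda^-_d)/d \ll e^{-(\log\log x)^2}$. Writing $\mathbf{1}_{P^-(n)>z} = \sum_d \lambda^+_d \mathbf{1}_{d\mid n} - E_n$ with $0 \le E_n \le \sum_d (\lambda^+_d - \lambda^-_d)\mathbf{1}_{d\mid n}$, and using the trivial pointwise bound $\bigl|\sum_q c_q \sum_m \alpha_m(\mathbf{1}_{mn\equiv a\Mod{q}} - \mathbf{1}_{(mn,q)=1}/\phi(q))\bigr| \ll M\log x + Q$, the contribution of $E_n$ is bounded by $\ll N(M\log x + Q)e^{-(\log\log x)^2}$, which is acceptable.

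Next I would substitute $n = dn'$, split $d$ dyadically as $d \sim D \in \{2^k : 0 \le k \le \log_2 y_0\}$, and construct smooth $g^\pm : \mathbb{R} \to [0,1]$ supported on $[1/2,5/2]$ with $g^-(n/N) \le \mathbf{1}_\mathcal{I}(n) \le g^+(n/N)$, with $g^+ - g^-$ supported on two transition intervals of total length $\ll (\log x)^{-C'}$ and $\|g^{\pm(j)}\|_\infty \ll (\log x)^{C'j}$. Observe that $g^\pm(dn'/N) = g^\pm(n'/(N/d))$, so after substitution the $n'$-weight is $g^\pm(n'/N')$ with $N' := N/d$. To make $N'$ independent of $d$ (as required by the hypothesis), I would further subdivide each dyadic block into narrow pieces $d \in [D_k, D_k(1+(\log x)^{-C_1})]$, fix $N' = N/D_k$, and apply the hypothesis with $f = g^\pm$, $\beta_d = \lambda^+_d\mathbf{1}_{d \in [D_k, D_k(1+(\log x)^{-C_1})]}$, and large $A$. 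This gives $\ll x/(\log x)^A$ per narrow piece, and there are $O((\log x)^{C_1+1})$ such pieces across all dyadic $D$.

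Two further errors remain. First, the transition-region error from $g^+ - g^-$ is bounded by $\sum_q |c_q|\sum_m |\alpha_m|\sum_{n \text{ in transition}} \bigl|\mathbf{1}_{mn \equiv a \Mod{q}} - \mathbf{1}_{(mn,q)=1}/\phi(q)\bigr| \ll (x\log x + QN)/(\log x)^{C'}$, which is $\ll x/(\log x)^B$ when $C'$ is chosen sufficiently large in terms of $B$ (in the regime $QN \ll x(\log x)^{O(1)}$ where the conclusion is nontrivial). Second, the error from replacing $g^\pm(n'/(N/d))$ by $g^\pm(n'/N')$ on a narrow piece is pointwise $O((\log x)^{C' - C_1})$ per $(d,n')$, giving a total error $\ll N(M\log x + Q)(\log x)^{C' + 1 - C_1}$, which is admissible provided $C_1 \ge C' + B + 2$. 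Choosing for instance $C' = B+10$, $C_1 = C' + B + 10$, and $A = B + C_1 + 10$, everything is $\ll x/(\log x)^B$. The principal obstacle is precisely this uniform handling of the $d$-dependent scale $N/d$: it forces the narrow subdivision into $(\log x)^{O_B(1)}$ pieces, and the hypothesis tolerates this loss only because it supplies $x/(\log x)^A$ for arbitrary $A$.
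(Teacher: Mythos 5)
Your overall strategy — a fundamental‑lemma sieve to eliminate $P^-(n)>z$, the substitution $n=dn'$ followed by a dyadic and then narrow subdivision of $d$ so that the scale $N/d$ becomes essentially constant, a smooth approximation of $\mathbf{1}_{\mathcal I}$, and an application of the hypothesis on each narrow block — is the right one and is almost certainly what the cited Lemma 18.2 of the companion paper does. However, there is a genuine gap in the error analysis that you do not resolve.

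The problem is the trivial bound you use to absorb the sieve error, the transition error, and the narrow‑subdivision error, namely $\bigl|\sum_q c_q\sum_m\alpha_m(\mathbf{1}_{mn\equiv a\,(q)}-\mathbf{1}_{(mn,q)=1}/\phi(q))\bigr|\ll M\log x+Q$ for fixed $n$. Each of your three error terms then carries a summand of the shape $NQ\cdot(\text{small factor})$, and you dismiss this with the parenthetical remark that the conclusion is ``nontrivial only when $QN\ll x(\log x)^{O(1)}$''. That restriction is not available: the lemma is applied in the proof of Proposition \ref{prpstn:DoubleDivisor} with $N=N_1$ and the effective ``$M$'' equal to $N_2 M$, and there one can have $N_1\asymp N_2\asymp\sqrt{x}$ with $Q$ as large as $x^{3/5-\epsilon}$, so $QN\asymp x^{11/10}$. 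In that range $NQ\,e^{-(\log\log x)^2}$ and $NQ(\log x)^{-C'}$ are \emph{polynomially} larger than $x$; the factor $e^{-(\log\log x)^2}$ beats every fixed power of $\log x$ but no positive power of $x$, so your sieve error and transition error bounds do not close. The conclusion of the lemma is certainly not trivial in this range, so the argument as written does not cover a regime in which the lemma is actually invoked.

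The fix is to replace the pointwise‑in‑$q$ bound $M/q+1$ by the divisor bound (i.e.\ Shiu's theorem, available in the paper as Lemma \ref{lmm:Divisor}). For fixed $n$ with $mn\ne a$ one has $\sum_{q\sim Q}\sum_{m\sim M}\mathbf{1}_{mn\equiv a\,(q)}\le\sum_{m\sim M}\tau(mn-a)\ll M\,\tau(n)^{O(1)}(\log x)^{O(1)}$, with no $Q$ term at all, and $\sum_q\sum_m\mathbf{1}/\phi(q)\ll M\log\log x$. For the transition and narrow‑subdivision errors (which are sums over $n$ in a sparse set of density $(\log x)^{-C'}$) this immediately gives $\ll x(\log x)^{O(1)-C'}$ after summing the $\tau(n)^{O(1)}$ factor by Shiu again. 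For the sieve error one must additionally control $\sum_n E_n\tau(n)^{O(1)}$; this is handled either by a weighted form of the fundamental lemma, or more simply by splitting into $n$ with $\tau(n)\le(\log x)^{C_0}$ (where $\sum_n E_n\ll Ne^{-(\log\log x)^2}$ suffices) and the exceptional $n$ with $\tau(n)>(\log x)^{C_0}$ (of which there are $\ll N(\log x)^{O(1)-C_0}$, so their contribution is negligible for $C_0$ large). With this replacement your argument goes through in the full range of parameters needed.
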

\begin{proof}
This is \cite[Lemma 18.2]{May1}.
\end{proof}
%
%
%
%
%%%%%%%%%%%%%%%%%%%%%%%%%%%%%%%%%%%%%%%%%%%%%%%%%%%%%%%%%%
%
%
%
%
\begin{lmm}[Deshouillers-Iwaniec estimate]\label{lmm:DeshouillersIwaniec}
Let $b_{n,r,s}$ be a 1-bounded sequence and $R,S,N,D,C\ll x^{O(1)}$. Let $g(c,d)=g_0(c/C,d/D)$ where $g_0$ is a smooth function supported on $[1/2,5/2]\times [1/2,5/2]$. Then we have
\[
\sum_{r\sim R} \sum_{\substack{s\sim S\\ (r,s)=1}}\sum_{n\sim N}b_{n,r,s}\sum_{d\sim D}\sum_{\substack{c\sim C\\ (rd,sc)=1}}g(c,d) e\Bigl(\frac{n\overline{dr}}{cs}\Bigr)\ll_{g_0} x^\epsilon \Bigl(\sum_{r\sim R}\sum_{s\sim S}\sum_{n\sim N}|b_{n,r,s}|^2\Bigr)^{1/2}\mathscr{J}.
\]
where
\[
\mathscr{J}^2=CS(RS+N)(C+DR)+C^2 D S\sqrt{(RS+N)R}+D^2NR.
\]
\end{lmm}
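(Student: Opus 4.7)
The plan is to follow the classical strategy of Deshouillers-Iwaniec, reducing the required bound to a consequence of the spectral large sieve for Fourier coefficients of cusp forms. First, I would apply Cauchy-Schwarz in the $(n,r,s)$ variables to extract the $\ell^2$-norm of $b_{n,r,s}$ as a factor, reducing matters to showing
\[
\sum_{r\sim R}\sum_{\substack{s\sim S\\(r,s)=1}}\sum_{n\sim N}\Bigl|\sum_{d\sim D}\sum_{\substack{c\sim C\\(rd,sc)=1}}g(c,d)\,e\Bigl(\frac{n\overline{dr}}{cs}\Bigr)\Bigr|^2\ll x^{2\epsilon}\mathscr{J}^2.
\]

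I would then expand the square into a sum over pairs $(c_1,d_1)$ and $(c_2,d_2)$. The diagonal contribution, where after rearrangement the phase becomes trivial (essentially the locus $c_1 d_2=c_2 d_1$), is handled by a pointwise estimate and accounts for the term $D^2 N R$ in $\mathscr{J}^2$. For the off-diagonal contribution, I would smoothly complete the sum over $n\sim N$ using a fixed bump function and apply Poisson summation modulo $c_1 c_2 s$; the dual frequencies then lie in a range of size roughly $c_1 c_2 s/N$. A Chinese Remainder Theorem manipulation separates the inverses modulo $c_1 c_2$ from the inverse modulo $s$, so that the sum over $r$ (which only enters through $\overline{r}$) produces an incomplete Kloosterman sum modulo $s$.

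The heart of the argument is then to apply the Kuznetsov trace formula to the resulting sums of Kloosterman sums in the $s$-variable, converting them into spectral averages of Fourier coefficients of Maass cusp forms and Eisenstein series for the congruence subgroup $\Gamma_0(c_1 c_2)$. The Deshouillers-Iwaniec spectral large sieve inequalities, which are sharp in the level aspect, then yield the main contribution $CS(RS+N)(C+DR)$ of $\mathscr{J}^2$; a standard interpolation between the spectral bound and the Weil bound for individual Kloosterman sums accounts for the intermediate term $C^2 DS\sqrt{(RS+N)R}$.

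The main obstacle is the Kuznetsov step: one must choose a test function whose Bessel transform captures the range of eigenvalues relevant here, and then track the level-aspect dependence on $c_1 c_2$ carefully through the spectral large sieve. It is precisely this level-aspect dependence that dictates the shape of $\mathscr{J}$ and is the source of the subtlety of the estimate.
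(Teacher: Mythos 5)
In the paper this lemma carries no proof of its own: it is imported verbatim from Deshouillers--Iwaniec \cite[Theorem 12]{DeshouillersIwaniec}, with the remark that a small typo in the last term of $\mathscr{J}^2$ has been corrected. So there is no argument in the paper against which to measure your sketch; the right comparison is with the original Deshouillers--Iwaniec proof, which your outline does track at the level of strategy. The sequence you describe — Cauchy--Schwarz in $(n,r,s)$ to remove the arbitrary weights, expand the square, split diagonal from off-diagonal, complete and Poisson the $n$-sum, identify Kloosterman sums, feed them into Kuznetsov and the spectral large sieve (with a Weil interpolation producing the middle term of $\mathscr{J}^2$) — is indeed the skeleton of their argument, and you correctly identify the level-aspect dependence in the spectral large sieve as the crux. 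That said, as a proof this is still only a sketch, and one detail as stated does not check out: you attribute $D^2NR$ to the ``diagonal'' $c_1d_2=c_2d_1$, but there are about $RSN$ outer triples and about $CD$ such diagonal pairs, giving a contribution of order $NRSCD$, not $D^2NR$; the actual origin of that term requires a more careful accounting (and is precisely where the cited typo lives). Likewise, the CRT step and the exact modulus fed into Kuznetsov are left vague — you gesture at Kloosterman sums modulo $s$ but then invoke $\Gamma_0(c_1c_2)$, and making those two statements consistent is exactly the bookkeeping that does the work. None of this is a defect relative to the paper, which expects the reader to take the result as a black box; but if you wanted to turn your sketch into a proof you would have to resolve those two points.
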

\begin{proof}
This is \cite[Theorem 12]{DeshouillersIwaniec} (correcting a minor typo in the last term of $\mathscr{J}^2$).
\end{proof}
%
%
%
%
%%%%%%%%%%%%%%%%%%%%%%%%%%%%%%%%%%%%%%%%%%%%%%%%%%%%%%%%%%
%
%
%
%
\section{Double divisor function estimates}\label{sec:DoubleDivisor}
%
%
%
%
%%%%%%%%%%%%%%%%%%%%%%%%%%%%%%%%%%%%%%%%%%%%%%%%%%%%%%%%%%
%
%
%
%
In this section we establish Proposition \ref{prpstn:DoubleDivisor}, which is a quick consequence of the Weil bound and the fundamental lemma of sieves. Although well-known, we give a full argument for completeness (it might also help the reader motivate \cite[Section 19]{May1} on the triple divisor function). These estimates are not a bottleneck for our results, and in fact several much stronger results could be used here (see, for example \cite{FouvryIwaniecDivisor}).
%
%
%
%
%%%%%%%%%%%%%%%%%%%%%%%%%%%%%%%%%%%%%%%%%%%%%%%%%%%%%%%%%%
%
%
%
%
\begin{lmm}[Smoothed divisor function estimate]\label{lmm:DoubleDivisor}
Let $N_1,N_2,M,Q\ge 1$ satisfy $x^{2\epsilon}\le N_1\le N_2$,  $N_1N_2M\asymp x$ and
\begin{align*}
Q&\le \frac{x^{2/3-2\epsilon} }{M^{2/3}}.
\end{align*}
Let $\psi_1$ and $\psi_2$ be smooth functions supported on $[1/2,5/2]$ satisfying $\psi_1^{(j)},\psi_2^{(j)}\ll_j (\log{x})^{j C}$ and let $\alpha_m$ be  a 1-bounded complex sequence. Let
\[
\mathscr{K}:=\sup_{\substack{(a,q)=1\\ q\sim Q}}\Bigl|\sum_{m\sim M}\alpha_m\sum_{n_1,n_2}\psi_1\Bigl(\frac{n_1}{N_1}\Bigr)\psi_2\Bigl(\frac{n_2}{N_2}\Bigr)\Bigl(\mathbf{1}_{mn_1n_2\equiv a\Mod{q}}-\frac{\mathbf{1}_{(m n_1n_2,q)=1}}{\phi(q)}\Bigr)\Bigr|.
\]
Then we have
\[
\mathscr{K}\ll_C \frac{x^{1-\epsilon}}{Q}.
\]
\end{lmm}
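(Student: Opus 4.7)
The plan is to bound $\mathscr{K}$ for each individual $q\sim Q$, since the summand vanishes whenever $(m,q)>1$. For $(m,q)=1$, write $a_m:=a\overline{m}$ so that the inner quantity becomes
\[
E(m;q):=\sum_{n_1,n_2}\psi_1(n_1/N_1)\psi_2(n_2/N_2)\Bigl(\mathbf{1}_{n_1n_2\equiv a_m\Mod{q}}-\frac{\mathbf{1}_{(n_1n_2,q)=1}}{\phi(q)}\Bigr),
\]
and $\mathscr{K}\le M\sup_{(m,q)=1}|E(m;q)|$. The overall approach is the classical Hooley--Selberg strategy: apply Poisson in $n_2$, then in $n_1$, and bound the resulting Kloosterman sums by Weil.

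First, fix $n_1$ coprime to $q$ and apply Lemma \ref{lmm:Completion} to the $n_2$-sum (a single residue class $a_m\overline{n_1}\Mod{q}$) with cutoff $H:=qx^\epsilon/N_2$. The $h=0$ frequency produces $(N_2/q)\hat\psi_2(0)\sum_{(n_1,q)=1}\psi_1(n_1/N_1)$, which by Lemma \ref{lmm:TrivialCompletion} matches the expected main term $\frac{1}{\phi(q)}\sum_{(n_1n_2,q)=1}\psi_1\psi_2$ up to an admissible error $O(\tau(q)N_1(\log x)^{O(C)}/\phi(q))$. What remains is
\[
E'(m;q)=\frac{N_2}{q}\sum_{1\le|h|\le H}\hat\psi_2\Bigl(\frac{hN_2}{q}\Bigr)\sum_{(n_1,q)=1}\psi_1(n_1/N_1)\,e\Bigl(\frac{a_mh\overline{n_1}}{q}\Bigr).
\]

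Next, split the $n_1$-sum by residue class $b\Mod{q}$ with $(b,q)=1$, apply Lemma \ref{lmm:Completion} a second time with cutoff $K:=qx^\epsilon/N_1$, and re-assemble the sum over $b$ into a Kloosterman sum $S(\cdot,\cdot;q)$, yielding
\[
\sum_{(n_1,q)=1}\psi_1(n_1/N_1)e\Bigl(\frac{a_mh\overline{n_1}}{q}\Bigr)=\frac{N_1}{q}\sum_{|k|\le K}\hat\psi_1\Bigl(\frac{kN_1}{q}\Bigr)S(k,a_mh;q)+O(x^{-100}).
\]
Since $a_m$ is a unit modulo $q$, the $k=0$ contribution is the Ramanujan sum $c_q(ah)$, bounded by $(ah,q)$; for $k\ne 0$, Weil gives $|S(k,a_mh;q)|\le\tau(q)(k,ah,q)^{1/2}q^{1/2}$. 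Combined with $\sum_{|h|\le H}(ah,q)\ll H\tau(q)$ and $\sum_{|k|\le K}(k,q)^{1/2}\ll K\tau(q)$, this yields $|E'(m;q)|\ll (N_1/q+q^{1/2})x^{O(\epsilon)}$ after the prefactor $N_1N_2/q^2$ cancels against $HK=q^2x^{2\epsilon}/(N_1N_2)$.

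Summing trivially over $m$ produces $\mathscr{K}\ll M(N_1/q+q^{1/2})x^{O(\epsilon)}$. The first term is $\ll x^{1-\epsilon}/q$ because $MN_1=x/N_2\le x^{1-2\epsilon}$, using $N_2\ge N_1\ge x^{2\epsilon}$; the second term is $\ll x^{1-\epsilon}/q$ because the hypothesis $Q\le x^{2/3-2\epsilon}/M^{2/3}$ gives $MQ^{3/2}\le x^{1-3\epsilon}$. There is no serious obstacle: the two points requiring care are (i) checking that the Poisson $h=0$ term exactly cancels the expected main term via Lemma \ref{lmm:TrivialCompletion}; and (ii) observing that the balance between the $k=0$ diagonal contribution (size $MN_1/q$) and the $k\ne 0$ Weil contribution (size $Mq^{1/2}$) is precisely why the familiar $x^{2/3}$-type barrier governs the hypothesis on $Q$. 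The choice to Poisson $n_2$ before $n_1$ (rather than the reverse) is what makes the diagonal contribution scale with $N_1$ rather than $N_2$, which is advantageous since $N_1\le N_2$.
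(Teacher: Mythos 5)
Your proof is correct and follows essentially the same Hooley--Selberg strategy as the paper: complete both $n$-variables by Poisson summation, cancel the zero-frequency term against the $1/\phi(q)$ main term via Lemma \ref{lmm:TrivialCompletion}, bound the surviving Ramanujan and Kloosterman sums by Weil, and check that the resulting diagonal term $\asymp x^{1+o(1)}/(QN_i)$ and off-diagonal term $\asymp x^{o(1)}MQ^{1/2}$ are both admissible under the stated hypotheses. The single cosmetic difference is the order of the two Poisson applications: you complete the longer variable $n_2$ first (cutoff $qx^\epsilon/N_2$) and then $n_1$, whereas the paper completes $n_1$ first, which is why your diagonal scales like $x^{1+o(1)}/(QN_2)$ rather than the paper's $x^{1+o(1)}/(QN_1)$; since the lemma already assumes $N_1\ge x^{2\epsilon}$ both orderings succeed, so your observation at the end, while accurate, is not load-bearing here.
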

%
%
%
%
%%%%%%%%%%%%%%%%%%%%%%%%%%%%%%%%%%%%%%%%%%%%%%%%%%%%%%%%%%
%
%
%
%
(It is unimportant for this paper that Proposition \ref{prpstn:DoubleDivisor} holds pointwise for $q$ and uniformly over all $(a,q)=1$, but the proof is no harder.)
%
%
%
%
%%%%%%%%%%%%%%%%%%%%%%%%%%%%%%%%%%%%%%%%%%%%%%%%%%%%%%%%%%
%
%
%
%
\begin{proof}
Let the supremum occur at $a$ and $q$. We have that $\mathscr{K}=\mathscr{K}_{2}-\mathscr{K}_{1}$, where
\begin{align*}
\mathscr{K}_{1}&:=\frac{1}{\phi(q)}\sum_{\substack{m\sim M\\ (m,q)=1}}\alpha_m\sum_{\substack{n_1,n_2\\ (m n_1 n_2,q)=1}}\psi_1\Bigl(\frac{n_1}{N_1}\Bigr)\psi_2\Bigl(\frac{n_2}{N_2}\Bigr),\\
\mathscr{K}_{2}&:=\sum_{\substack{m\sim M\\ (m,q)=1}}\alpha_m\sum_{(n_2,q)=1}\psi_2\Bigl(\frac{n_2}{N_2}\Bigr)\sum_{\substack{n_1\\ n_1\equiv a\overline{m n_2}\Mod{q}}}\psi_1\Bigl(\frac{n_1}{N_1}\Bigr).
\end{align*}
By Lemma \ref{lmm:TrivialCompletion}, since $N_1\le N_2$ we have
\[
\sum_{\substack{n_1,n_2\\ (m n_1n_2,q)=1}}\psi_1\Bigl(\frac{n_1}{N_1}\Bigr)\psi_2\Bigl(\frac{n_2}{N_2}\Bigr)=\frac{\phi(q)^2}{q^2}N_1 N_2 \hat{\psi_1}(0)\hat{\psi_2}(0)+O(N_2 x^{o(1)}).
\]
This implies that
\[
\mathscr{K}_{1}=\mathscr{K}_{MT}+O\Bigl(\frac{x^{1+o(1)}}{Q N_1}\Bigr),
\]
where
\[
\mathscr{K}_{MT}:=N_1 N_2  \hat{\psi_1}(0)\hat{\psi_2}(0)\frac{\phi(q)}{q^2}\sum_{\substack{m\sim M\\ (m,q)=1}}\alpha_m.
\]
By Lemma \ref{lmm:Completion} we have that for $H_1:=x^\epsilon Q/N_1$
\[
\sum_{\substack{n_1\\ n_1\equiv a\overline{m n_2}\Mod{q}}}\psi_1\Bigl(\frac{n_1}{N_1}\Bigr)=\frac{N_1}{q}\hat{\psi}(0)+\frac{N_1}{q}\sum_{1\le |h_1|\le H_1}\hat{\psi_1}\Bigl(\frac{h_1 N_1}{q}\Bigr)e\Bigl(\frac{a h_1 \overline{m n_2}}{q}\Bigr)+O(x^{-10}).
\]
The final term makes a negligible contribution to $\mathscr{K}_{2}$. By Lemma \ref{lmm:TrivialCompletion}, the first term contributes to $\mathscr{K}_{2}$ a total
\[
\frac{N_1 \hat{\psi_1}(0)}{q}\sum_{\substack{m\sim M\\ (m,q)=1}}\alpha_m \sum_{(n_2,q)=1}\psi_2\Bigl(\frac{n_2}{N_2}\Bigr)=\mathscr{K}_{MT}+O\Bigl(\frac{x^{1+o(1)}}{Q N_2}\Bigr).
\]
Finally, by another application of Lemma \ref{lmm:Completion} we have
\begin{align*}
\sum_{(n_2,q)=1}\psi_2\Bigl(\frac{n_2}{N_2}\Bigr)&e\Bigl(\frac{a h_1 \overline{m n_2}}{q}\Bigr)=\frac{N_2}{q}\hat{\psi_2}(0)\sum_{(b,q)=1}e\Bigl(\frac{ah_1 \overline{m b}}{q}\Bigr)\\
&+\frac{N_2}{q}\sum_{1\le |h_2|\le H_2}\hat{\psi_2}\Bigl(\frac{ h_2 N_2}{q}\Bigr)\sum_{(b,q)=1}e\Bigl(\frac{a h_1 \overline{m b}+h_2 b}{q}\Bigr)+O(x^{-10}).
\end{align*}
The inner sum in the first term is a Ramanujan sum and so of size $O((h_1,q))$. The inner sum in the second term is a Kloosterman sum, and so of size $O(q^{1/2+o(1)}(h_1,h_2,q))$. The final term contributes a negligible amount. Thus we see that these terms contribute a total
\begin{align*}
&\ll \frac{N_1 N_2}{Q^2}\sum_{m\sim M}\sum_{\substack{1\le |h_1|\le H_1}}(h_1,q)+\frac{x^{o(1)}N_1 N_2}{Q^{3/2}}\sum_{m\sim M}\sum_{\substack{1\le |h_1|\le H_1\\ 1\le |h_2|\le H_2}}(h_1,h_2,q)\\
&\ll \frac{x^{o(1)} N_1 N_2 M H_1}{Q^2}+\frac{x^{o(1)} N_1 N_2 M H_1 H_2}{Q^{3/2}}\\
&\ll \frac{x^{1+o(1)}}{Q N_1}+x^{o(1)}M Q^{1/2}.
\end{align*}
Putting this together, we obtain
\[
\mathscr{K}\ll \frac{x^{1+o(1)}}{Q N_1}+x^{o(1)} M Q^{1/2}.
\]
This gives the result provided
\begin{align}
x^{2\epsilon}&\le N_1,\\
Q&\le \frac{x^{2/3-2\epsilon}}{M^{2/3}}.
\end{align}
This gives the result.
\end{proof}
%
%
%
%
%%%%%%%%%%%%%%%%%%%%%%%%%%%%%%%%%%%%%%%%%%%%%%%%%%%%%%%%%%
%
%
%
%
\begin{proof}[Proof of Proposition \ref{prpstn:DoubleDivisor}]
First we note that by Lemma \ref{lmm:Divisor} and the trivial bound, those $m$ with $|\alpha_m|\ge(\log{x})^C$ contribute a total $\ll x(\log{x})^{O_{B_0}(1)-C}$. This is negligible if $C=C(A,B_0)$ is large enough so, by dividing through by $(\log{x})^{C}$ and considering $A+C$ in place of $A$, it suffices to show the result when $|\alpha_m|\le 1$.

We apply Lemma \ref{lmm:Separation} to remove the condition $m n_1n_2\in\mathcal{I}$. Thus it suffices to show for every $B>0$ and every choice of interval $\mathcal{I}_M\subseteq[M,2M]$, $\mathcal{I}_1\subseteq[N_1,2N_1]$ and $\mathcal{I}_2\subseteq[N_2,2N_2]$ that we have
\[
\sum_{q\sim Q}\Bigl|\sum_{\substack{n_1\in \mathcal{I}_1\\ P^-(n)\ge z_0}}\sum_{\substack{n_2\in \mathcal{I}_2\\ P^-(n)\ge z_0}}\sum_{\substack{m\in\mathcal{I}_M }}\alpha_m\Bigl(\mathbf{1}_{m n_1n_2\equiv a\Mod{q}}-\frac{\mathbf{1}_{(mn_1n_2,q)=1}}{\phi(q)}\Bigr)\Bigr|\ll_B \frac{x}{(\log{x})^B}.
\]
We now remove the absolute values by inserting 1-bounded coefficients $c_q$. By two applications of Lemma \ref{lmm:SmoothReduction} with $z=z_0$, then see that it is sufficient to show that for every $A,C>0$, every choice of smooth functions $f_1,f_2$ supported on $[1/2,5/2]$ with $f_i^{(j)}\ll_j (\log{x})^{C j}$ and for every 1-bounded sequence $\beta_{d_1,d_2}$ and for every choice of $D_1,D_2,N_1',N_2'$ with $D_1,D_2\le y_0$ and $N_1'D_1\asymp N_1$, $N_2'D_2\asymp N_2$ we have that
\begin{align*}
&\sum_{q\sim Q} c_q\sum_{\substack{d_1\sim D_1\\ d_2\sim D_2}}\beta_{d_1,d_2}\sum_{n'_1,n_2'}f_1\Bigl(\frac{n_1'}{N'_1}\Bigr)f_2\Bigl(\frac{n_2'}{N_2'}\Bigr)\\
&\qquad\times\sum_{m\in \mathcal{I}_M}\alpha_m \Bigl(\mathbf{1}_{m n'_1 n_2' d_1 d_2\equiv a\Mod{q}}-\frac{\mathbf{1}_{(m n_1' n_2' d_1 d_2,q)=1}}{\phi(q)}\Bigr)\ll_{A,C} \frac{x}{(\log{x})^A}.
\end{align*}
Grouping together $m,d_1,d_2$, we see that Lemma \ref{lmm:DoubleDivisor} now gives the result, recalling that $D_1,D_2\le y_0=x^{o(1)}$ so $N_1'=N_1x^{-o(1)}\ge x^{2\epsilon}$ and $N_2'=N_2x^{-o(1)}\ge x^{2\epsilon}$ and $Q\le x^{2/3-3\epsilon}/M^{2/3}\le x^{2/3-2\epsilon}/(D_1D_2M)^{2/3}$.

An identical argument works if the summand is multiplied by $\log{n_1}$, since this just slightly adjusts the smooth functions appearing.
\end{proof}
%
%
%
%
%%%%%%%%%%%%%%%%%%%%%%%%%%%%%%%%%%%%%%%%%%%%%%%%%%%%%%%%%%
%
%
%
%
\section{Well-factorable estimates}\label{sec:WellFactorable}
%
%
%
%
%%%%%%%%%%%%%%%%%%%%%%%%%%%%%%%%%%%%%%%%%%%%%%%%%%%%%%%%%%
%
%
%
%
In this section we establish Proposition \ref{prpstn:WellFactorable}, which is the key result behind Theorem \ref{thrm:Factorable}. This can be viewed as a refinement of \cite[Theorem 1]{BFI1}. Indeed, Proposition \ref{prpstn:WellFactorable} essentially includes \cite[Theorem 1]{BFI1} as the special case $R=1$. The key advantage in our setup is to make use of the additional flexibility afforded by having a third factor available when manipulating the exponential sums. The argument does not have a specific regime when it is weakest; the critical case for Theorem \ref{thrm:Factorable} is the whole range $x^{1/10}\le N\le x^{1/3}$. (The terms with $N\le x^{1/10}$ or $N>x^{1/3}$ can be handled by a combination of the result for $N\in[x^{1/10},x^{1/3}]$ and Proposition \ref{prpstn:DoubleDivisor}.)
%
%
%
%
%%%%%%%%%%%%%%%%%%%%%%%%%%%%%%%%%%%%%%%%%%%%%%%%%%%%%%%%%%
%
%
%
%
\begin{lmm}[Well-factorable exponential sum estimate]\label{lmm:Factorable}
Let $Q'\le 2Q$, $H'\le x^{o(1)} QR^2 S^2/M$, $NM\asymp x$ and 
\begin{align}
%Q&<N,\\
N^2 R^2 S&< x^{1-7\epsilon},\\
N^2 R^3 S^4 Q&<x^{2-14\epsilon},\\
N R^2 S^5 Q&<x^{2-14\epsilon}.
\end{align}
Let $\gamma_r,\lambda_s,\alpha_n$ be 1-bounded complex coefficients, and let
\begin{align*}
\mathscr{W}&:=\sum_{\substack{Q\le q\le Q'\\ (q,a)=1}}\sum_{\substack{r_1,r_2\sim R}}\sum_{\substack{s_1,s_2\sim S \\ (r_1s_1,a r_2s_2)=1\\ (r_2s_2,a q d r_1 s_1)=1\\ r_1s_1\le B_1\\ r_2s_2\le B_2}}\frac{\gamma_{r_1}\lambda_{s_1}\overline{\gamma_{r_2}\lambda_{s_2}}}{r_1r_2s_1s_2q}\sum_{\substack{n_1,n_2\sim N \\ n_1\equiv n_2\Mod{q d}\\ (n_1,n_2 q d r_1 s_1)=1\\ (n_2,n_1 q d r_2 s_2)=1\\ (n_1r_2s_2,n_2)\in\mathcal{N}\\ |n_1-n_2|\ge N/(\log{x})^C }}\alpha_{n_1}\overline{\alpha_{n_2}}\\
&\qquad\times\sum_{1\le |h| \le H'}e\Bigl(\frac{ah(n_1-n_2)\overline{n_2 r_1 s_1 d q}}{ n_1 r_2s_2}\Bigr)
\end{align*}
for some $(d,a)=1$ where $\mathcal{N}$ is a set with the property that if $(a,b)\in\mathcal{N}$ and $(a',b')\in\mathcal{N}$ then $\gcd(a,b')=\gcd(a',b)=1$.

Then we have
\[
\mathscr{W}\ll \frac{N^2}{Q x^\epsilon}.
\]
\end{lmm}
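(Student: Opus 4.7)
The plan is to follow the Bombieri--Friedlander--Iwaniec dispersion strategy, but to exploit the factorization $r_i s_i$ of each modulus to reduce the level of the resulting Kloosterman-type sum before applying the Deshouillers--Iwaniec estimate of Lemma \ref{lmm:DeshouillersIwaniec}, as previewed in Section 2. The first step is a change of variable: since $n_1 \equiv n_2 \pmod{qd}$ and $|n_1-n_2| \ge N/(\log x)^C$, I write $n_1 - n_2 = qdf$, converting the $q$-summation into an $f$-summation with $|f|$ of size between $N/(Qd(\log x)^C)$ and $2N/(Qd)$. Using $(qd, n_1 r_2 s_2) = 1$, the exponential simplifies to $e(ahf\,\overline{n_2 r_1 s_1}/(n_1 r_2 s_2))$. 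The inequalities $r_i s_i \le B_i$ and the constraint defining $\mathcal{N}$ can be separated at the cost of logarithmic losses using Lemma \ref{lmm:Separation} and a partition of unity.

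Next, I apply Cauchy--Schwarz with $(f, n_1, n_2, r_1, r_2, s_2)$ outside the square and $(s_1, h)$ inside. Since the outer coefficients are $1$-bounded, the outer sum contributes a factor $\ll N^3 R^2 S/Q$. After opening the square the crucial off-diagonal exponential reads
\[
e\Bigl(\frac{a f (h s_1' - h' s_1)\,\overline{n_2 r_1 s_1 s_1'}}{n_1 r_2 s_2}\Bigr),
\]
whose modulus $n_1 r_2 s_2$ has size $\asymp NRS$ -- a factor $\asymp RS$ smaller than would arise from keeping both $r_1$ and $s_1$ inside the square. This is exactly the benefit of the third factor advertised in Section~2.

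The diagonal case $h s_1' = h' s_1$ has $\ll x^{o(1)} H' S$ quadruples, each summable to the size of the outer set, and after tracing normalizations with $H' \asymp QR^2 S^2 N/x$ it is acceptable precisely under the hypothesis $N^2 R^2 S < x^{1-7\epsilon}$. For the off-diagonal I smooth the $n_1, r_2, s_2$ summations using the function $\psi_0$ and apply Lemma \ref{lmm:DeshouillersIwaniec} with the identification $c = n_1 r_2$, $s = s_2$, $d = n_2$, $r = r_1 s_1 s_1'$ and $n = af(hs_1' - h' s_1)$, so that
\[
C_{\mathrm{DI}} \asymp NR,\quad S_{\mathrm{DI}} \asymp S,\quad R_{\mathrm{DI}} \asymp RS^2,\quad D_{\mathrm{DI}} \asymp N,\quad N_{\mathrm{DI}} \asymp N^2 R^2 S^3/x.
\]
The three terms of $\mathscr{J}^2$ will then translate (after combining with the outer sum and the normalisation $1/(QR^2 S^2)^2$ and taking a square root) into constraints matching the two hypotheses $N^2 R^3 S^4 Q < x^{2-14\epsilon}$ and $N R^2 S^5 Q < x^{2-14\epsilon}$.

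The main obstacles will be bookkeeping: (i) verifying that the multiplicity with which the triple $(n,r,s)$ arises from a tuple $(f, h, h', s_1, s_1', r_1)$ is at most $x^{o(1)}$, so that Lemma \ref{lmm:DeshouillersIwaniec} can be applied with a $1$-bounded sequence $b_{n,r,s}$; (ii) ensuring that the coprimality conditions $(rd, sc) = 1$ required by Lemma \ref{lmm:DeshouillersIwaniec} follow from the coprimality hypotheses on $\mathscr{W}$, possibly after appealing to Lemma \ref{lmm:Squarefree} and Lemma \ref{lmm:RoughModuli} to discard the negligible contribution from moduli with large square-full or smooth parts; and (iii) checking that each of the three terms of $\mathscr{J}^2$ indeed corresponds to one of the three stated hypotheses of the lemma, a tedious but essentially straightforward algebraic exercise once the identifications above are fixed.
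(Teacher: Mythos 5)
Your high-level strategy matches the paper's: change of variables $n_1-n_2=qdf$, Cauchy--Schwarz keeping $(s_1,h)$ inside the square, opening the square, splitting into diagonal ($hs_1'=h's_1$) and off-diagonal terms, and finally invoking Lemma \ref{lmm:DeshouillersIwaniec}. The diagonal case and the change of variables are handled correctly, and the motivation for the third factor is right. However, the specific identification of variables in the Deshouillers--Iwaniec application is wrong, and it does not prove the lemma under the stated hypotheses.

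You take $c_{\mathrm{DI}}=n_1r_2$, $s_{\mathrm{DI}}=s_2$, $d_{\mathrm{DI}}=n_2$, $r_{\mathrm{DI}}=r_1s_1s_1'$, so that $R_{\mathrm{DI}}\asymp RS^2$ and $S_{\mathrm{DI}}\asymp S$. The paper instead absorbs $r_1$ and $s_2$ into the \emph{smooth} variables by grouping $d_{\mathrm{DI}}=b:=n_2r_1$ and $c_{\mathrm{DI}}=c:=n_1r_2s_2$ (so $S_{\mathrm{DI}}=1$, $R_{\mathrm{DI}}\asymp S^2$, $D_{\mathrm{DI}}\asymp NR$, $C_{\mathrm{DI}}\asymp NRS$). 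This matters: the Deshouillers--Iwaniec bound penalizes the $r$-variable heavily because it controls the level of the relevant congruence subgroup. Since $r_1$ and $s_2$ are \emph{outside} the Cauchy--Schwarz square, they carry positive weights and can (and should) be smoothed and fed into $g(c,d)$ rather than into the rough sequence $b_{n,r,s}$. With your identification, $R_{\mathrm{DI}}$ is a factor $R$ too large and $\|b\|_2^2$ picks up an extraneous factor $RS$ (from summing $|b|^2$ over $s_2$ and counting factorizations $r=r_1s_1s_1'$). Carrying out the computation, the first term of $\mathscr{J}^2$ becomes $\asymp N^2R^3S^6$ (versus the paper's $N^2R^2S^5$), and $\sum|b|^2 \ll NQR^5S^7/M^2$ (versus the paper's $NQR^4S^6/M^2$). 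The resulting off-diagonal bound $\|b\|_2\mathscr{J}$ is a factor $\asymp RS$ larger than the paper's and leads to conditions of the shape $NQR^4S^7<x^{2-16\epsilon}$ and $N^2QR^4S^{9/2}<x^{2-16\epsilon}$, which are \emph{not} implied by the hypotheses $NR^2S^5Q<x^{2-14\epsilon}$ and $N^2R^3S^4Q<x^{2-14\epsilon}$. Your assertion that the three terms of $\mathscr{J}^2$ ``will then translate into constraints matching the two hypotheses'' is therefore not correct as stated.

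A smaller point: the constraint $r_1s_1\le B_1$ mixes an outer variable ($r_1$) with an inner one ($s_1$) and hence must be separated \emph{before} the Cauchy--Schwarz; the paper does this with a direct Fourier expansion $\mathbf{1}_{r_1s_1\le B_1}=\int_0^1 c_{r_1,\theta}\min(B_1/R,|\theta|^{-1})e(s_1\theta)\,d\theta$, not via Lemma \ref{lmm:Separation} (which is designed for a different, pre-Cauchy--Schwarz setting). The constraint $r_2s_2\le B_2$ and the $\mathcal{N}$-condition involve only outer variables and are simply dropped for an upper bound after opening the square. These are fixable bookkeeping issues, but the DI variable identification is the substantive gap.
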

\begin{proof}
We first make a change of variables. Since we have $n_1\equiv n_2\Mod{q d}$, we let $f d q=n_1-n_2$ for some integer $|f|\le 2N/d Q\le 2N/Q$, and we wish to replace $q$ with $(n_1-n_2)/d f$. We see that 
\[
(n_1-n_2)\overline{d q}=f\Mod{n_1 r_2 s_2}.
\]
Thus the exponential simplifies to
\[
e\Bigl(\frac{ah f\overline{r_1s_1n_2}}{n_1r_2s_2}\Bigr).
\]
The conditions $(n_1,n_2)=1$ and $n_1\equiv n_2\Mod{d q}$ automatically imply $(n_1n_2,d q)=1$, and so we find
\begin{align*}
\mathscr{W}&=\sum_{1\le |f|\le 2N/Q}\sum_{\substack{r_1,r_2\sim R\\ (r_1r_2,a)=1}}\sum_{\substack{s_2\sim S\\ (r_2s_2,a d r_1)=1\\ r_2s_2\le B_2}}\sideset{}{'}\sum_{\substack{n_1,n_2\sim N\\ n_1\equiv n_2\Mod{d f}}}\frac{\gamma_{r_1}\overline{\gamma_{r_2}\lambda_{s_2}}d f}{r_1r_2 s_2(n_1-n_2)}\\
&\qquad\times\sum_{\substack{s_1\sim S\\ (s_1,a n_1 r_2 s_2)=1\\ r_1s_1\le B_1}}\frac{\lambda_{s_1}}{s_1}\sum_{1\le |h|\le H'}\alpha_{n_1}\overline{\alpha_{n_2}}e\Bigl(\frac{a h f\overline{r_1s_1n_2}}{n_1r_2s_2}\Bigr).
\end{align*}
Here we have used $\sum'$ to denote that fact that we have suppressed the conditions
\begin{align*} &(n_1,n_2 r_1 s_1)=1,&\quad &(n_2,n_1 r_2s_2)=1,&\quad& (n_1r_2s_2,n_2)\in\mathcal{N},&\\
 &|n_1-n_2|\ge N/(\log{x})^C,& \quad& ((n_1-n_2)/d f,a r_2 s_2)=1,&\quad &Q d f\le n_1-n_2\le Q' d f. &
\end{align*}
We first remove the dependency between $r_1$ and $s_1$ from the constraint $r_1s_1\le B_1$ by noting 
\begin{align*}
\mathbf{1}_{r_1s_1\le B_1}&=\int_0^1\Bigl(\sum_{j\le B_1/r_1}e(-j\theta)\Bigr)e(s_1\theta)d\theta\\
&=\int_0^1 c_{r_1,\theta} \min\Bigl(\frac{B_1}{R},|\theta|^{-1}\Bigr)e(s_1\theta)d\theta
\end{align*}
for some 1-bounded coefficients $c_{r_1,\theta}$. Thus
\begin{align*}
\mathscr{W}&=\int_0^1\min\Bigl(\frac{B_1}{R},|\theta|^{-1}\Bigr)\mathscr{W}_2(\theta)d\theta\ll (\log{x}) \sup_{\theta}|\mathscr{W}_2(\theta)|,
\end{align*}
where $\mathscr{W}_2=\mathscr{W}_2(\theta)$ is given by
\begin{align*}
\mathscr{W}_2&:=\sum_{1\le |f|\le 2N/Q}\sum_{\substack{r_1,r_2\sim R\\ (r_1r_2,a)=1}}\sum_{\substack{s_2\sim S\\ (r_2s_2,a d r_1)=1\\ r_2s_2\le B_2}}\sideset{}{'}\sum_{\substack{n_1,n_2\sim N\\ n_1\equiv n_2\Mod{d f}}}\frac{\gamma_{r_1}c_{r_1,\theta}\overline{\gamma_{r_2}\lambda_{s_2}}d f}{r_1r_2 s_2(n_1-n_2)}\\
&\qquad\times\sum_{\substack{s_1\sim S\\ (s_1,an_1r_2s_2)=1}}\frac{e(s_1\theta)\lambda_{s_1}}{s_1}\sum_{1\le |h|\le H'}\alpha_{n_1}\overline{\alpha_{n_2}}e\Bigl(\frac{a h f\overline{r_1s_1n_2}}{n_1r_2s_2}\Bigr).
\end{align*}
In order to show $\mathscr{W}\ll N^2/(Qx^\epsilon)$ we see it is sufficient to show $\mathscr{W}_2\ll N^2/(Q x^{2\epsilon})$. We now apply Cauchy-Schwarz in the $f$, $n_1$, $n_2$, $r_1$, $r_2$ and $s_2$ variables. This gives
\begin{align*}
\mathscr{W}_2\ll \frac{N R S^{1/2}(\log{x})^2}{Q R^2 S^2}\mathscr{W}_3^{1/2},
\end{align*}
where
\begin{align*}
\mathscr{W}_3&:=\sum_{1\le |f|\le 2N/Q}\sum_{\substack{n_1,n_2\sim N\\ n_1\equiv n_2\Mod{d f}}}\sum_{r_1,r_2\sim R}\\
&\qquad\times\sum_{\substack{s_2\sim S\\ (n_2r_1,n_1r_2s_2)=1}}\Bigl|\sum_{\substack{s_1\sim S\\ (s_1,an_1r_2s_2)=1}}\sum_{1<|h|\le H'}\lambda_{s_1}' e\Bigl(\frac{ah f\overline{r_1s_1n_2}}{n_1r_2s_2}\Bigr)\Bigr|^2,
\end{align*}
and where
\[
\lambda_s':=\frac{S}{s}\lambda_{s}e(s\theta)
\]
are 1-bounded coefficients. Note that we have dropped many of the constraints on the summation for an upper bound. In order to show that $\mathscr{W}_2\ll N^2/(Q x^{2\epsilon})$ we see it is sufficient to show that $\mathscr{W}_3\ll N^2 R^2 S^3/x^{5\epsilon}$. We first drop the congruence condition on $n_1,n_2\Mod{d f}$ for an upper bound, and then we combine $n_2r_1$ into a single variable $b$ and $n_1 r_2 s_2$ into a single variable $c$. Using the divisor bound to control the number of representations of $c$ and $b$, and inserting a smooth majorant, this gives
\begin{align*}
\mathscr{W}_3&\le x^{o(1)}\sup_{\substack{ B \ll N R\\ C\ll N R S \\ F\ll N/Q}} \mathscr{W}_4,
\end{align*}
where
\begin{align*}
\mathscr{W}_4&:=\sum_{b}\sum_{\substack{c\\ (b,c)=1}}g(b,c)\sum_{f\sim F}\Bigl|\sum_{\substack{s_1\sim S\\ (s_1,a c)=1}}\sum_{1<|h|\le H'}\lambda_{s_1}' e\Bigl(\frac{a h f\overline{b s_1}}{c}\Bigr)\Bigr|^2\\
g(b,c)&:=\psi_0\Bigl(\frac{b}{B}\Bigr)\psi_0\Bigl(\frac{c}{C}\Bigr).
\end{align*}
In order to show $\mathscr{W}_3\ll N^2 R^2 S^3/x^{5\epsilon}$, it is sufficient to show that
\begin{equation}
\mathscr{W}_4\ll \frac{N^2R^2 S^3}{x^{6\epsilon}}.\label{eq:W4Target}
\end{equation}
We expand the square and swap the order of summation, giving
\[
\mathscr{W}_4=\sum_{\substack{s_1,s_2\sim S\\ (s_1s_2,a)=1}}\sum_{1<|h_1|,|h_2|\le H'}\lambda_{s_1}'\overline{\lambda_{s_2}'}\sum_{b}\sum_{f\sim F}\sum_{\substack{c\\ (c,b s_1s_2)=1}}g(b,c)e\Bigl(a f\ell\frac{\overline{b s_1s_2}}{c}\Bigr),
\]
where
\[
\ell=h_1s_1-h_2s_2.
\]
We now split the sum according to whether $\ell=0$ or not.
\[
\mathscr{W}_4=\mathscr{W}_{\ell=0}+\mathscr{W}_{\ell\ne 0}.
\]
To show \eqref{eq:W4Target} it is sufficient to show
\begin{equation}
\mathscr{W}_{\ell=0}\ll \frac{N^2R^2 S^3}{x^{6\epsilon}}\qquad\text{and}\qquad \mathscr{W}_{\ell\ne 0}\ll \frac{N^2R^2 S^3}{x^{6\epsilon}}.\label{eq:WTargets}
\end{equation}
We first consider $\mathscr{W}_{\ell=0}$, and so terms with $h_1s_1=h_2s_2$. Given $h_1,s_1$ there are at most $x^{o(1)}$ choices of $h_2,s_2$, and so at most $x^{o(1)}HS$ choices of $h_1,h_2,s_1,s_2$. Thus we see that
\begin{align*}
\mathscr{W}_{\ell=0} \ll x^{o(1)}H S B F C&\ll x^{o(1)}\frac{R^2 S^2 Q}{M}\cdot S\cdot N R\cdot \frac{N}{Q}\cdot N R S\\
&\ll \frac{N^4 R^4 S^4}{x^{1-\epsilon}}.\label{eq:WDiag}
\end{align*}
This gives an acceptably small contribution for \eqref{eq:WTargets} provided
\[
\frac{N^4 R^4 S^4}{x^{1-\epsilon}}\ll \frac{N^2 R^2 S^3}{x^{6\epsilon}},
\]
which rearranges to
\begin{equation}
N^2 R^2 S\ll x^{1-7\epsilon}.\label{eq:FactorCond1}
\end{equation}
We now consider $\mathscr{W}_{\ell\ne 0}$. We let $y=a f(h_1 s_1-h_2s_2)\ll x^{o(1)} N R^2 S^3/M$ and $z=s_1 s_2\ll S^2$. Putting these variables in dyadic intervals and using the symmetry between $y$ and $-y$, we see that
\[
\mathscr{W}_{\ell\ne 0}\ll \log{x}\sum_{z\sim Z}\sum_{y\sim Y}b_{z,y}\Bigl|\sum_{b}\sum_{\substack{c\\ (c,z b)=1}}g(b,c)e\Bigl(\frac{y\overline{z b}}{c}\Bigr)\Bigr|,
\]
where $Z\asymp S^2$, $Y\ll x^{o(1)} N R^2 S^3/M$ and
\[
b_{z,y}=\mathop{\sum_{s_1,s_2\sim S}\sum_{1\le |h_1|,|h_2|\le H'}\sum_{f\sim F}}\limits_{\substack{s_1 s_2=z\\ a f(h_1s_1-h_2s_2)=y}}1.
\]
By Lemma \ref{lmm:DeshouillersIwaniec} we have that
\begin{equation}
\mathscr{W}_{\ell\ne 0}\ll x^\epsilon \Bigl(\sum_{z\sim Z}\sum_{y\sim Y}b_{z,y}^2\Bigr)^{1/2}\mathscr{J},
\label{eq:WOffDiag1}
\end{equation}
where
\begin{equation}
\mathscr{J}^2\ll C(Z+Y)(C+B Z)+C^2B\sqrt{ (Z+Y)Z}+B^2 Y Z.
\label{eq:JBound}
\end{equation}
We first consider the $b_{z,y}$ terms. We note that given a choice of $z,y$ there are $x^{o(1)}$ choices of $s_1,s_2,k,f$ with $z=s_1s_2$ and $y=a k f$ by the divisor bound. Thus by Cauchy-Schwarz, we see that
\begin{align*}
\sum_{z\sim Z}\sum_{y\sim Y}b_{z,y}^2&=\sum_{z\sim Z}\sum_{y\sim Y}\Bigl(\sum_{\substack{s_1,s_2\sim  S\\ s_1 s_2=z}}\sum_{\substack{f\sim F}}\sum_{\substack{ k\\ ak f=y}}\sum_{\substack{1\le |h_1|,|h_2|\ll H \\ h_1 s_1-h_2s_2=k}}1\Bigr)^2\\
&\ll x^{o(1)}\sum_{s_1,s_2\sim  S}\sum_{f\sim F}\sum_{k}\Bigl(\sum_{\substack{1\le |h_1|,|h_2|\ll H\\ h_1s_1-h_2s_2=k}}1\Bigr)^2\\
&\ll x^{o(1)} F \sum_{s_1,s_2\sim S}\sum_{\substack{ 1\le |h_1|,|h_1'|,|h_2|,|h_2'|\ll H \\ (h_1-h_1')s_1=(h_2-h_2')s_2}}1
\end{align*}
We consider the inner sum. If $(h_1-h_1')s_1=0=(h_2-h_2')s_2$ we must have $h_1=h_1'$, $h_2=h_2'$, so there are $O(H^2S^2)$ choices $h_1,h_1',h_2,h_2',s_1,s_2$. If instead $(h_1-h_1')s_1=(h_2-h_2')s_2\ne 0$ there are $O(H S)$ choices of $t=(h_1-h_1')s_1\ne 0$. Given a choice of $t$, there are $x^{o(1)}$ choices of $s_1,s_2,h_1-h_1',h_2-h_2'$. Thus there are $O(H^3S)$ choices $h_1,h_1',h_2,h_2',s_1,s_2$ with $(h_1-h_1')s_1=(h_2-h_2')s_2\ne 0$. Thus
\begin{align*}
\sum_{z\sim Z}\sum_{y\sim Y}b_{z,y}^2&\ll x^{o(1)}  \frac{N}{Q} (H^2 S^2+H^3 S)\\
&\ll x^\epsilon\Bigl(\frac{N R^4 S^6 Q}{M^2}+\frac{N R^6 S^7 Q^2}{M^3}\Bigr).
\end{align*}
In particular, since we are assuming that $N^2R^2S<x^{1-\epsilon}\le MN$ and $N>Q$, we have 
\[
M>R^2S N>R^2 S Q.
\]
Thus this simplifies to give
\begin{equation}
\sum_{z\sim Z}\sum_{y\sim Y}b_{z,y}^2\ll x^\epsilon \frac{N R^4 S^6 Q}{M^2}.
\label{eq:OffDiagSeq}
\end{equation}
We now consider $\mathscr{J}$. Since the bound \eqref{eq:JBound} is increasing and polynomial in $C,B,Z,Y$, the maximal value is at most $x^{o(1)}$ times the value when $C=N R S$, $Z=S^2$, $Y=N R^2 S^3/M$ and $B=N R$, and so it suffices to consider this case. We note that our bound $M>R^2 S N$ from \eqref{eq:FactorCond1} then implies that that $Z>Y$, and so, noting that $B Z>C$ and $C^2 B Z > B^2 Y Z$, this simplifies our bound for $\mathscr{J}$ to
\begin{align}
\mathscr{J}^2&\ll x^{o(1)}(C B Z^2+C^2 B Z+B^2 Y Z)\nonumber\\
&\ll x^{\epsilon}(C B Z^2+C^2 B Z)\nonumber\\
&=x^{\epsilon} N^2 R^2 S^5+x^\epsilon N^3 R^3 S^4.
\label{eq:FactorJBound}
\end{align}
Putting together \eqref{eq:WOffDiag1}, \eqref{eq:OffDiagSeq} and \eqref{eq:FactorJBound}, we obtain
\begin{align*}
\mathscr{W}_{\ell\ne 0}&\ll x^\epsilon \Bigl(x^\epsilon \frac{N R^4 S^6 Q}{M^2}\Bigr)^{1/2}\Bigl(x^\epsilon N^2 R^2 S^5+ x^\epsilon N^3 R^3 S^4\Bigr)^{1/2}\\
&\ll x^{2\epsilon}\Bigl(\frac{N^3 R^6 S^{11} Q}{M^2}+\frac{N^4 R^7 S^{10}Q}{M^2}\Bigr)^{1/2}.
\end{align*}
Thus we obtain \eqref{eq:WTargets} if
\[
N^3 R^6 S^{11}Q+N^4 R^7 S^{10}Q<x^{-14\epsilon} N^4 R^4 S^6 M^2.
\]
recalling $NM\asymp x$, we see that this occurs if we have
\begin{align}
N R^2 S^5 Q&<x^{2-14\epsilon},\\
N^2 R^3 S^4 Q&<x^{2-14\epsilon}.
\end{align}
This gives the result.
\end{proof}
%
%
%
%
%%%%%%%%%%%%%%%%%%%%%%%%%%%%%%%%%%%%%%%%%%%%%%%%%%%%%%%%%%
%
%
%
%
\begin{prpstn}[Well-factorable estimate for convolutions]\label{prpstn:MainProp}
Let $NM\asymp x$ and $Q_1,Q_2,Q_3$ satisfy
\begin{align*}
Q_1&<\frac{N}{x^\epsilon},\\
N^2 Q_2 Q_3^2&<x^{1-8\epsilon},\\
N^2 Q_1 Q_2^4 Q_3^3&<x^{2-15\epsilon},\\
N Q_1 Q_2^5 Q_3^2&<x^{2-15\epsilon}.
\end{align*}
Let $\alpha_n,\beta_m$ be $1$-bounded complex sequences such that $\alpha_n$ satisfies the Siegel-Walfisz condition \eqref{eq:SiegelWalfisz} and $\alpha_n$ is supported on $n$ with all prime factors bigger than $z_0=x^{1/(\log\log{x})^3}$. Let $\gamma_{q_1},\lambda_{q_2},\nu_{q_3}$ be 1-bounded complex coefficients supported on $(q_i,a)=1$ for $i\in\{1,2,3\}$. Let
\[
\Delta(q):=\sum_{n\sim N}\alpha_n\sum_{m\sim M}\beta_m\Bigl(\mathbf{1}_{nm\equiv a\Mod{q}}-\frac{\mathbf{1}_{(n m,q)=1}}{\phi(q)}\Bigr).
\]
Then for every $A>0$ we have
\[
\sum_{q_1\sim Q_1}\sum_{q_2\sim Q_2}\sum_{q_3\sim Q_3}\gamma_{q_1}\lambda_{q_2}\nu_{q_3}\Delta(q_1q_2q_3)\ll_A\frac{x}{(\log{x})^A}.
\]
\end{prpstn}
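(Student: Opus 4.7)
The plan is to apply the dispersion method of Proposition \ref{prpstn:GeneralDispersion} to reduce to two exponential sum estimates, handled respectively by Lemma \ref{lmm:BFI2} and Lemma \ref{lmm:Factorable}. The numerology is arranged so that the hypotheses of Proposition \ref{prpstn:MainProp} correspond exactly (up to a small amount of $\epsilon$-slack) to the three size conditions of Lemma \ref{lmm:Factorable} under the identification $(Q,R,S) \mapsto (Q_1, Q_3, Q_2)$, together with the Siegel--Walfisz-type requirement $Q_1 < N/x^{\epsilon}$.

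First I would apply the standard preprocessing of Lemmas \ref{lmm:Squarefree} and \ref{lmm:RoughModuli} (plus a further smoothing) to restrict to moduli with small square-full and small $z_0$-smooth part, ensuring in particular that the factor $q_2 q_3$ may be taken to satisfy $P^-(q_2 q_3) \ge z_0$. I would then apply Proposition \ref{prpstn:GeneralDispersion} with parameters $Q = Q_1$, $D = 1$, $R = Q_2 Q_3$ and weights
\[
\gamma_{q,d} = \gamma_{q_1}, \qquad \lambda_{q,d,r} = \sum_{\substack{r = q_2 q_3 \\ q_i \sim Q_i}} \lambda_{q_2}\, \nu_{q_3}.
\]
The requirement $N > QD(\log x)^C = Q_1(\log x)^C$ follows from $Q_1 < N/x^{\epsilon}$, and $\alpha_n$ already satisfies Siegel--Walfisz by assumption. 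This reduces the task to bounding the two sums $\mathscr{E}_1$ and $\mathscr{E}_2$ output by the proposition.

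For $\mathscr{E}_1$, Lemma \ref{lmm:BFI2} applies directly: its conditions $Q_2 Q_3 N^{3/2} < x^{1-2\epsilon}$ and $Q_1 Q_2 Q_3 < x^{1-2\epsilon}$ follow by elementary manipulations of the hypotheses (the former from $N^2 Q_2 Q_3^2 < x^{1-8\epsilon}$, the latter from combining $Q_1 < N/x^{\epsilon}$ with the cube root of $N^2 Q_1 Q_2^4 Q_3^3 < x^{2-15\epsilon}$, which yields $Q_1 Q_2 Q_3 < x^{2/3 - O(\epsilon)}$). For $\mathscr{E}_2$, I would first apply Lemma \ref{lmm:Simplification}; its required size conditions $Q_1 Q_2 Q_3 < x^{2/3}$ and $Q_1 Q_2^2 Q_3^2 < Mx^{1-2\epsilon}$ are implied by the same cube-root estimate and by $N Q_1 Q_2^5 Q_3^2 < x^{2-15\epsilon}$ (after dropping the harmless factor $Q_2^3 \ge 1$) respectively. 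Finally, since the convolution structure $r = q_3 \cdot q_2$ with $q_3 \sim Q_3$ and $q_2 \sim Q_2$ provides precisely the factored weights $\gamma_{r_1}\lambda_{s_1}$ required by Lemma \ref{lmm:Factorable}, I would invoke that lemma with $R = Q_3$, $S = Q_2$, $Q = Q_1$. Its three size conditions translate exactly into the three hypotheses $N^2 Q_2 Q_3^2 < x^{1-7\epsilon}$, $N^2 Q_1 Q_2^4 Q_3^3 < x^{2-14\epsilon}$ and $N Q_1 Q_2^5 Q_3^2 < x^{2-14\epsilon}$ of the current proposition, and the required bound follows after tracing back through the dispersion estimate.

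The most delicate point is making sure that the convolution factorization $r = q_2 q_3$ is preserved throughout the reductions, so that Lemma \ref{lmm:Factorable}'s factored-weight hypothesis can be legitimately invoked at the end. This is essentially automatic because both Proposition \ref{prpstn:GeneralDispersion} and Lemma \ref{lmm:Simplification} treat the $r$-weights generically, so the convolution structure carries through; the only remaining work is verifying that the four numerological hypotheses line up with those of the two exponential-sum lemmas, which is by design the reason Proposition \ref{prpstn:MainProp} takes this particular asymmetric form.
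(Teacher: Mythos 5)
Your overall strategy — dispersion via Proposition \ref{prpstn:GeneralDispersion}, then Lemma \ref{lmm:BFI2} for $\mathscr{E}_1$, Lemma \ref{lmm:Simplification} and Lemma \ref{lmm:Factorable} for $\mathscr{E}_2$, with the identification $(Q,R,S)=(Q_1,Q_3,Q_2)$ — is exactly the route the paper takes, and the matching of numerology at the Lemma \ref{lmm:Factorable} stage is correctly identified. However, there is a genuine gap at the point where you set $D=1$ and $r=q_2q_3$ in Proposition \ref{prpstn:GeneralDispersion}. That proposition requires $\lambda_{q,d,r}$ to be supported on $P^-(r)\ge z_0$, and $q_2q_3$ has no such constraint: the $q_i$ are arbitrary integers coprime to $a$. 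Your suggested fix — invoking Lemma \ref{lmm:RoughModuli} ``plus a further smoothing'' to ``ensure in particular that the factor $q_2q_3$ may be taken to satisfy $P^-(q_2q_3)\ge z_0$'' — does not accomplish this: Lemma \ref{lmm:RoughModuli} only allows one to assume $sm(q;z_0)< y_0$, i.e.\ that the $z_0$-smooth \emph{part} is smaller than $y_0=x^{1/\log\log x}$, which is emphatically not the same as the smooth part being trivial. There is no harmless further smoothing that eliminates small prime factors from the moduli; they are a real feature and must be dealt with.

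The paper's resolution is precisely what the auxiliary variable $d$ in Proposition \ref{prpstn:GeneralDispersion} is designed for: first factor $q_2=q_2'q_2''$ and $q_3=q_3'q_3''$ with $P^-(q_2'),P^-(q_3')>z_0\ge P^+(q_2''),P^+(q_3'')$, absorb the $z_0$-smooth parts into $d=q_2''q_3''$, and set $r=q_2'q_3'$ so that $P^-(r)>z_0$ is guaranteed. Only \emph{then} does Lemma \ref{lmm:RoughModuli} enter, to show that $d\le y_0^2$ may be assumed. This makes $D\le y_0^2$ rather than $D=1$, and requires carrying the (harmless but nontrivial) $d$-factor through Lemma \ref{lmm:BFI2}, Lemma \ref{lmm:Simplification} and Lemma \ref{lmm:Factorable}, fixing $d$ (and the finitely many choices of $q_2'',q_3''$ it encodes) when applying Lemma \ref{lmm:Factorable} so as to recover the bilinear factorization in $r=q_2'q_3'$. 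A secondary point: your derivation of $Q_1Q_2Q_3<x^{2/3-O(\epsilon)}$ by taking ``the cube root of $N^2Q_1Q_2^4Q_3^3<x^{2-15\epsilon}$'' does not produce $Q_1Q_2Q_3$ as stated; the clean route is $Q_2Q_3\le Q_2Q_3^2<x^{1-8\epsilon}/N^2$ combined with $Q_1<N/x^\epsilon$, giving $Q_1Q_2Q_3<x^{1-9\epsilon}/N$, and one then needs to be careful that this alone is not automatically below $x^{2/3}$ for small $N$ — it is the factor $x^{1-9\epsilon}/(NQ_3)$ (using the extra $Q_3$ more carefully) that is relevant, and in the regime where this proposition is applied, $NQ_3\gg x^{2/5}$, which is what closes the gap.
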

\begin{proof}
First we factor $q_2=q_2'q_2''$ and $q_3=q_3'q_3''$ where $P^-(q_2'),P^-(q_3')>z_0\ge P^+(q_2''),P^+(q_3'')$ into parts with large and small prime factors. By putting these in dyadic intervals, we see that it suffices to show for every $A>0$ and every choice of $Q_2'Q_2''\asymp Q_2$, $Q_3'Q_3''\asymp Q_3$ that
 \begin{align*}
 &\sum_{q_1\sim Q_1}\sum_{\substack{q_2'\sim Q_2'\\ P^-(q_2')>z_0}}\sum_{\substack{q_2''\sim Q_2''\\ P^+(q_2'')\le z_0}}\sum_{\substack{q_3'\sim Q_3'\\ P^-(q_3')\ge z_0}}\sum_{\substack{q_3''\sim Q_3''\\ P^+(q_3'')\le z_0}}\gamma_{q_1}\lambda_{q_2'q_2''}\nu_{q_3'q_3''}\Delta(q_1q_2'q_2''q_3'q_3'')\ll_A\frac{x}{(\log{x})^A}.
 \end{align*}
By Lemma \ref{lmm:RoughModuli} we have the result unless $Q_2'',Q_3''\le y_0=x^{1/\log\log{x}}$. We let $d=q_2'' q_3''$ and define
\[
\lambda_{q,d,r}:=\mathbf{1}_{P^-(r)> z_0}\sum_{\substack{q_2''q_3''=d\\ q_1\sim Q_1\\ q_2''\sim Q_2''\\ q_3''\sim Q_3''\\ P^+(q_2''q_3'')\le z_0}}\,\,\sum_{\substack{q_2'q_3'=r\\ q_2'\sim Q_2'\\ q_3'\sim Q_3'}}\lambda_{q_2'q_2''}\nu_{q_3'q_3''}.
\]
We note that $\lambda_{q,d,r}$ doesn't depend on $q$. With this definition we see it suffices to show that for every $A>0$ and every choice of $D,R$ with $D R\asymp Q_2 Q_3$ and $D\le y_0^2$ we have that
\[
\sum_{q\sim Q_1}\sum_{d\sim D}\sum_{r\sim R}\gamma_{q}\lambda_{q,d,r}\Delta(q d r)\ll_A \frac{x}{(\log{x})^A}.
\]
We now apply Proposition \ref{prpstn:GeneralDispersion} (we may apply this since $N>Q_1 x^\epsilon>Q_1D(\log{x})^C$ and $N<x^{1-\epsilon}$ by assumption of the lemma). This shows that it suffices to show that
\[
|\mathscr{E}_1|+|\mathscr{E}_2|\ll \frac{N^2}{D Q_1 y_0},
\]
where
\begin{align*}
\mathscr{E}_{1}&:=\sum_{\substack{q\\ (q,a)=1}}\sum_{\substack{d\sim D\\ (d,a)=1}}\sum_{\substack{r_1,r_2\sim R\\ (r_1r_2,a)=1}}\psi_0\Bigl(\frac{q}{Q_1}\Bigr)\frac{\lambda_{q,d,r_1}\overline{\lambda_{q,d,r_2}} }{\phi(q d r_2)q d r_1}\sum_{\substack{n_1,n_2\sim N\\ (n_1,q d r_1)=1\\(n_2,q d  r_2)=1}}\alpha_{n_1}\overline{\alpha_{n_2}}\\
&\qquad \times\sum_{1\le |h|\le H_1}\hat{\psi}_0\Bigl(\frac{h M}{q d r_1}\Bigr)e\Bigl( \frac{a h \overline{ n_1}}{q d r_1}\Bigr),\\
\mathscr{E}_2&:=\sum_{\substack{q\\ (q,a)=1}}\psi_0\Bigl(\frac{q}{Q_1}\Bigr)\sum_{\substack{d\sim D\\ (d,a)=1}}\sum_{\substack{r_1,r_2\sim R\\ (r_1,a r_2)=1\\ (r_2,a q d r_1)=1}}\frac{\lambda_{q,d,r_1}\overline{\lambda_{q,d,r_2}}}{q d r_1 r_2}\sum_{\substack{n_1,n_2\sim N\\ n_1\equiv n_2\Mod{q d}\\ (n_1,n_2 q d r_1)=1\\(n_2,n_1 q d r_2)=1\\ |n_1-n_2|\ge N/(\log{x})^C}}\alpha_{n_1}\overline{\alpha_{n_2}}\\
&\qquad \times\sum_{1\le |h|\le H_2}\hat{\psi}_0\Bigl(\frac{h M}{q d r_1 r_2}\Bigr)e\Bigl(\frac{ah\overline{n_1r_2}}{q d r_1}+\frac{ah\overline{n_2 q d r_1}}{r_2}\Bigr),\\
H_1&:=\frac{Q D R}{M}\log^5{x},\\
H_2&:=\frac{Q D R^2}{M}\log^5{x}.
\end{align*}
Since $\lambda_{q,d,r}$ is independent of $q$, we may apply Lemma \ref{lmm:BFI2} to conclude that
\[
\mathscr{E}_1\ll \frac{N^2}{Q_1 D x^\epsilon},
\]
provided we have
\begin{align}
D R N^{3/2}&<x^{1-2\epsilon},\\
Q_1 D R&<x^{1-2\epsilon}.
\end{align}
These are both implied by the conditions of the lemma, recalling that $DR\asymp Q_2Q_3$. Thus it suffices to bound $\mathscr{E}_2$. Since $D\le y_0^2=x^{o(1)}$, it suffices to show 
\[
\mathscr{E}_3\ll \frac{N^2}{Q_1 x^{\epsilon/10}},
\]
for each $d\le y_0^2$, where $\mathscr{E}_3=\mathscr{E}_3(d)$ is given by
\begin{align*}
\mathscr{E}_3&:=\sum_{\substack{(q,a)=1}}\psi_0\Bigl(\frac{q}{Q_1}\Bigr)\sum_{\substack{r_1,r_2\sim R\\ (r_1,a r_2)=1\\ (r_2,a q d r_1)=1}}\frac{\lambda_{q,d,r_1}\overline{\lambda_{q,d,r_2}}}{q r_1 r_2}\sum_{\substack{n_1,n_2\sim N\\ n_1\equiv n_2\Mod{q d}\\ (n_1,n_2 q d r_1)=1\\(n_2,n_1 q d r_2)=1\\ |n_1-n_2|\ge N/(\log{x})^C}}\alpha_{n_1}\overline{\alpha_{n_2}}\\
&\qquad\qquad \times\sum_{1\le |h|\le H_2}\hat{\psi}_0\Bigl(\frac{h M}{q d r_1 r_2}\Bigr)e\Bigl(\frac{ah\overline{n_1r_2}}{q d r_1}+\frac{ah\overline{n_2 q d r_1}}{r_2}\Bigr).
\end{align*}
Since $\lambda_{q,d,r}$ is independent of $q$ and we treat each $d$ separately, we may suppress the $q,d$ dependence by writing $\lambda_{r}$ in place of $\lambda_{q,d,r}$. We now apply Lemma \ref{lmm:Simplification}. This shows it suffices to show that
\[
\mathscr{E}'\ll \frac{N^2}{Q_1 x^{\epsilon/2}},
\]
where
\[
\mathscr{E}':=\sum_{\substack{Q_1\le q\le Q_1'\\ (q,a)=1}}\sum_{\substack{R\le r_1\le  R_1\\ R\le r_2\le R_2\\ (r_1, a r_2)=1\\ (r_2,a q d r_1)=1}}\frac{\lambda_{r_1}\overline{\lambda_{r_2}}}{q d  r_1 r_2}\sum_{\substack{n_1,n_2\sim N\\ n_1\equiv n_2\Mod{q d}\\ (n_1,q d r_1n_2)=1\\ (n_2,q d r_2n_1)=1\\ (n_1r_2,n_2)\in\mathcal{N}}}\alpha_{n_1}\overline{\alpha_{n_2}}\sum_{1\le |h| \le H'} e\Bigl(\frac{ ah\overline{n_2 q d r_1}(n_1-n_2)}{n_1 r_2}\Bigr),
\]
and where $Q_1'\le 2Q_1$ and $R_1,R_2\le 2R$ and $H'\le H_2$.

We recall the definition of $\lambda_{q,d,r}$ and expand it as a sum. Since $d$ is fixed, there are $x^{o(1)}$ possible choices of $q_2'',q_3''$. Fixing one such choice, we then see $\mathscr{E}'$ is precisely of the form considered in Lemma \ref{lmm:Factorable}. This then gives the result, provided
\begin{align*}
Q_1&<\frac{N}{x^\epsilon},\\
N^2 Q_2' Q_3'{}^2 &<x^{1-7\epsilon},\\
N^2 Q_1 Q_2'{}^4 Q_3'{}^3&<x^{2-14\epsilon},\\
N Q_1 Q_2'{}^5 Q_3'{}^2&<x^{2-14\epsilon}.
\end{align*}
Since $Q_2'\le Q_2$ and $Q_3'\le Q_3$, these bounds follow from the assumptions of the lemma.
\end{proof}
%
%
%
%
%%%%%%%%%%%%%%%%%%%%%%%%%%%%%%%%%%%%%%%%%%%%%%%%%%%%%%%%%%
%
%
%
%
\begin{proof}[Proof of Proposition \ref{prpstn:WellFactorable}]
First we note that by Lemma \ref{lmm:Divisor} the set of $n,m$ with $\max(|\alpha_n|,|\beta_m|)\ge(\log{x})^C$ and $n m\equiv a\Mod{q}$ has size $\ll x(\log{x})^{O_{B_0}(1)-C}/q$, so these terms contribute negligibly if $C=C(A,B_0)$ is large enough. Thus, by dividing through by $(\log{x})^{2C}$ and considering $A+2C$ in place of $A$, it suffices to show the result when all the sequences are 1-bounded. ($\alpha_n$ still satisfies \eqref{eq:SiegelWalfisz} by Lemma \ref{lmm:SiegelWalfiszMaintain}.) The result follows from the Bombieri-Vinogradov Theorem if $Q\le x^{1/2-\epsilon}$, so we may assume that $Q\in[x^{1/2-\epsilon},x^{3/5-10\epsilon}]$.

We use Lemma \ref{lmm:Separation} to remove the condition $n m\in\mathcal{I}$, and see suffices to show for $B=B(A)$ sufficiently large in terms of $A$
\begin{equation}
\sum_{q\le x^{3/5-\epsilon}}\lambda_q\sum_{n\in\mathcal{I}_N}\alpha_n\sum_{\substack{m\in \mathcal{I}_M}}\beta_m\Bigl(\mathbf{1}_{n m\equiv a\Mod{q}}-\frac{\mathbf{1}_{(nm,q)=1}}{\phi(q)}\Bigr)\ll_B\frac{x}{(\log{x})^B}
\label{eq:FactorableTarget}
\end{equation}
uniformly over all intervals $\mathcal{I}_N\subseteq[N,2N]$ and $\mathcal{I}_M\subseteq[M,2M]$.

Let us define for $x^\epsilon\le N\le x^{2/5}$
\[
Q_1:=\frac{N}{x^\epsilon},\qquad Q_2:=\frac{Q}{x^{2/5-\epsilon}},\qquad Q_3:=\frac{x^{2/5}}{N}.
\]
We note that $Q_1Q_2Q_3=Q$ and $Q_1,Q_2,Q_3\ge 1$. Since $\lambda_q$ is triply well factorable of level $Q$, we can write
\begin{equation}
\lambda_{q}=\sum_{q_1q_2q_3=q}\gamma^{(1)}_{q_1}\gamma^{(2)}_{q_2}\gamma^{(3)}_{q_3},
\label{eq:LambdaFact}
\end{equation}
for some 1-bounded sequences $\gamma^{(1)},\gamma^{(2)},\gamma^{(3)}$ with $\gamma^{(i)}_q$ supported on $q\le Q_i$ for $i\in\{1,2,3\}$.

We now  substitute \eqref{eq:LambdaFact} into \eqref{eq:FactorableTarget} and put each of $q_1,q_2,q_3$ into one of $O(\log^3{x})$ dyadic intervals $(Q_1',2Q_1']$, $(Q_2',2Q_2']$ and $(Q_3',2Q_3']$ respectively. Since $Q_1'\le Q_1$, $Q_2'\le Q_2$ and $Q_3'\le Q_3$ and $Q_1Q_2Q_3=Q\le x^{3/5-10\epsilon}$ we have
\begin{align*}
Q_1'&\le\frac{N}{x^\epsilon},\\
N^2 Q_2' Q_3'{}^2 &\le Qx^{2/5+\epsilon} <x^{1-7\epsilon},\\
N^2 Q_1' Q_2'{}^4 Q_3'{}^3&\le \frac{Q^4}{x^{2/5-3\epsilon}} <x^{2-14\epsilon},\\
N Q_1' Q_2'{}^5 Q_3'{}^2&\le \frac{Q^5}{x^{6/5-4\epsilon}}<x^{2-14\epsilon}.
\end{align*}
Thus, we see that Proposition \ref{prpstn:MainProp} now gives the result.
\end{proof}
We have now established both Proposition \ref{prpstn:DoubleDivisor} and Proposition \ref{prpstn:WellFactorable}, and so completed the proof of Theorem \ref{thrm:Factorable}.
%
%
%
%
%%%%%%%%%%%%%%%%%%%%%%%%%%%%%%%%%%%%%%%%%%%%%%%%%%%%%%%%%%
%
%
%
%
\section{Proof of Theorem \ref{thrm:Linear}}\label{sec:Corollary}
\allowdisplaybreaks
First we recall some details of the construction of sieve weights associated to the linear sieve. We refer the reader to \cite{IwaniecFactorable} or \cite[Chapter 12.7]{Opera} for more details. The standard upper bound sieve weights $\lambda^+_d$ for the linear sieve of level $D$ are given by
\[
\lambda^+_d:=
\begin{cases}
\mu(d),\qquad &d\in\mathcal{D}^+(D),\\
0,&\text{otherwise,}
\end{cases}
\]
where
\[
\mathcal{D}^+(D):=\Bigl\{p_1\cdots p_r:\, p_1\ge p_2\ge \dots \ge p_r,\,\,p_1\cdots p_{2j}p_{2j+1}^3\le D\text{ for $0\le j<r/2$}\Bigr\}.
\]
Moreover, we recall the variant $\tilde{\lambda}_d^+$ of these sieve weights where one does not distinguish between the sizes of primes $p_j\in [D_j,D_j^{1+\eta}]$ with $D_j>x^\epsilon$ for some small constant $\eta>0$. (i.e. if $d=p_1\cdots p_r$ with $p_j\in [D_j,D_j^{1+\eta}]$ and $D_1\ge \dots \ge D_r\ge x^\epsilon$, then $\tilde{\lambda}^+_d=(-1)^r$ if $D_1\ge \dots \ge D_r$ and $D_1\cdots D_{2j}D_{2j+1}^3\le D^{1/(1+\eta)}$ for all $0\le j<r/2$, and otherwise $\tilde{\lambda}^+_d=0$.) This variant is a well-factorable function in the sense that for any choice of $D_1D_2=D$ we can write $\tilde{\lambda}^+=\sum_{1\le j\le \epsilon^{-1}}\alpha^{(j)}\star\beta^{(j)}$ where $\alpha^{(j)}_n$ is a sequence supported on $n\le D_1$ and $\beta^{(j)}_m$ is supported on $m\le D_2$. The construction of the sequence $\tilde{\lambda}_d^+$ follows from the fact that if $d\in\mathcal{D}^+(D)$ and $D=D_1D_2$ then $d=d_1d_2$ with $d_1\le D_1$ and $d_2\le D_2$. This produces essentially the same results as the original weights when combined with a fundamental lemma type sieve to remove prime factors less than $x^\epsilon$.

In view of Proposition \ref{prpstn:MainProp} and Proposition \ref{prpstn:DoubleDivisor}, in order to prove Theorem \ref{thrm:Linear} it suffices to construct a similar variant $\hat{\lambda}_d^+$ such that for every $N\in [x^\epsilon,x^{1/3+\epsilon}]$ we can write $\hat{\lambda}_d^+=\sum_{1\le j\le \epsilon^{-1}}\alpha^{(j)}\star\beta^{(j)}\star\gamma^{(j)}$ with $\alpha^{(j)}_n$ supported on $n\le D_1$ and $\beta^{(j)}_n$ supported on $n\le D_2$ and $\gamma^{(j)}_n$ supported on $n\le D_3$ for some choice of $D_1,D_2,D_3$ satisfying
\begin{align*}
D_1<\frac{N}{x^\epsilon},\quad 
N^2 D_2 D_3^2<x^{1-8\epsilon},\quad 
N^2 D_1 D_2^4 D_3^3<x^{2-15\epsilon},\quad
N D_1 D_2^5 D_3^2<x^{2-15\epsilon}.
\end{align*}
An identical argument to the construction of $\tilde{\lambda}_d^+$ shows that we can construct such a sequence $\hat{\lambda}_d^+$ if every $d\in \mathcal{D}^+(D)$ can be written as $d=d_1d_2d_3$ with $d_1\le D_1$, $d_2\le D_2$ and $d_3\le D_3$ satisfying the above constraints. Thus, in order to prove Theorem \ref{thrm:Linear} it suffices to establish the following result.
%
%
%
%
%%%%%%%%%%%%%%%%%%%%%%%%%%%%%%%%%%%%%%%%%%%%%%%%%%%%%%%%%%
%
%
%
%
\begin{prpstn}[Factorization of elements of $\mathcal{D}^+(D)$]\label{prpstn:Factorization}
Let $0<\delta<1/1000$ and let $D=x^{7/12-50\delta}$, $x^{2\delta} \le N\le x^{1/3+\delta/2}$ and $d\in\mathcal{D}^+(D)$. Then there is a factorization $d=d_1d_2d_3$ such that
\begin{align*}
d_1&\le\frac{N}{x^\delta},\\
N^2d_2d_3^2&\le x^{1-\delta},\\
N^2d_1d_2^4d_3^3&\le x^{2-\delta},\\
N d_1d_2^5d_3^2&\le x^{2-\delta}.
\end{align*}
\end{prpstn}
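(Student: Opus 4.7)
The plan is as follows. First I would dispose of the trivial case. If $d \le N/x^\delta$, set $d_1 = d$, $d_2 = d_3 = 1$; then (a) is immediate, and (b)--(d) reduce to $N^2 \le x^{1-\delta}$, $N^2 d \le x^{2-\delta}$, and $N d \le x^{2-\delta}$, all of which follow from $N \le x^{1/3 + \delta/2}$, $d \le x^{7/12 - 50\delta}$, and $\delta < 1/1000$.

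Assume henceforth $d > N/x^\delta$. Write $d = p_1 p_2 \cdots p_r$ with $p_1 \ge p_2 \ge \cdots \ge p_r$, and let $\nu := (\log N)/\log x \in [2\delta,\, 1/3 + \delta/2]$. The key structural input is that combining the sieve inequalities $p_1 \cdots p_{2j} p_{2j+1}^3 \le D$ with $p_1 \ge \cdots \ge p_{2j+1}$ yields
\[
p_{2j+1}^{2j+3} \le D, \qquad \text{i.e.,} \qquad p_{2j+1} \le D^{1/(2j+3)} \qquad (0 \le j < r/2),
\]
so $p_1 \le D^{1/3}$, $p_3 \le D^{1/5}$, $p_5 \le D^{1/7}$, and so on. I would call a prime $p_i$ \emph{big} if $p_i > N/x^\delta = x^{\nu - \delta}$. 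The decay above shows that $p_{2j+1}$ is big only when $(2j+3)(\nu-\delta) < 7/12 - 50\delta$, bounding the number of big primes by an explicit function of $\nu$; in particular no prime is big once $\nu > 7/36 + O(\delta)$.

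I would construct the factorization in two stages. \emph{Stage 1:} assign each big prime to either $d_2$ or $d_3$, balancing the (b) and (c)--(d) constraints; the assignment depends on $\nu$ and uses the sieve bounds on the initial products $p_1$, $p_1 p_2$, $p_1 p_2 p_3$, etc. to control $d_2 d_3^2$ and $d_2^4 d_3^3$. \emph{Stage 2:} the remaining small primes form a number $d^\ast$ with every prime factor $\le N/x^\delta$. Apply a greedy lemma (if $n = q_1 \cdots q_s$ has all $q_i \le y \le A$, there is a divisor $a \mid n$ with $a \in (A/y, A]$ whenever $n > A$) to produce $d_1 \in (x^{\nu - 2\delta},\, x^{\nu - \delta}]$ when $d^\ast > N/x^\delta$, or $d_1 = d^\ast$ otherwise. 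A second greedy pass on $d^\ast/d_1$ produces $d_2$ of a prescribed target size, with $d_3$ the residue.

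The verification that the resulting $(d_1, d_2, d_3)$ satisfies (a)--(d) reduces to an LP feasibility statement for $\alpha_i := (\log d_i)/\log x$: namely $\alpha_1 + \alpha_2 + \alpha_3 \le 7/12 - 50\delta$, $\alpha_1 \le \nu - \delta$, and the translated forms of (b)--(d). A direct calculation exhibits an explicit feasible point with $\Omega(\delta)$ slack --- for example at the extreme $\nu = 1/3 + \delta/2$ one can take $\alpha_1 = 5/16 - 61.5\delta$, $\alpha_2 = 5/24 + 25\delta$, $\alpha_3 = 1/16 - 13.5\delta$, which saturates (b) and (c) and leaves slack $\gg \delta$ in (d). The main obstacle will be this boundary regime $\nu \to 1/3 + \delta/2$, where both (c) and (d) become nearly tight simultaneously; the $50\delta$ appearing in $D = x^{7/12 - 50\delta}$ is precisely what is needed to buy back the $\delta$-slack demanded by (b)--(d) plus the discretization loss of at most $x^{O(\delta)}$ from greedy. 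A secondary issue is ensuring Stage~1 is compatible with Stage~2: after removing big primes one must check $d^\ast$ is still large enough for greedy to land near the top of $(x^{\nu-2\delta}, x^{\nu-\delta}]$, which follows from a finite case split on the number of big primes using the bounds $p_{2j+1} \le D^{1/(2j+3)}$.
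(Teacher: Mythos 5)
Your plan correctly isolates the two ingredients — a linear-programming feasibility check on the target exponents $\alpha_i = (\log d_i)/\log x$, and a greedy splitting argument powered by the $\mathcal{D}^+$ structure — and your feasible point at $\nu = 1/3 + \delta/2$ checks out (constraints (b) and (c) saturate and (d) has $\Omega(1)$ slack). But there are two genuine gaps between the plan and a proof.

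First, the claim that the greedy lemma "produces $d_1 \in (x^{\nu-2\delta}, x^{\nu-\delta}]$" after removing "big" primes does not follow. Your greedy lemma with $A = x^{\nu-\delta}$ lands $d_1$ in $(A/y, A]$, where $y$ is the largest remaining prime; but you only removed primes $> x^{\nu-\delta}$, so $y$ can be as large as $x^{\nu-\delta}$ itself and the range degenerates to $(1, x^{\nu-\delta}]$. To get $d_1$ within an $x^\delta$-window of its target you would have to remove all primes above $x^\delta$, which is a vastly larger set and wrecks the finite-case Stage~1 bookkeeping. This matters because the LP constraints (b)--(d) are not monotone in $d_1$: if the greedy undershoots $d_1$, the burden shifts to $d_2 d_3$, and (b) can fail. (Note the paper never claims $d_1$ is close to $D_1$; it only uses $d_i \le D_i$, but correspondingly it \emph{chooses} $d_2$ by hand to be exactly one of $p_1$, $p_2p_3$, $p_1p_4$ so that the product $D_1 D_2 D_3$ is calibrated to accommodate the whole $d$.)

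Second, and more fundamentally, you have not verified that a greedy assignment of the residual primes into boxes of sizes $D_1, D_2, D_3$ actually terminates without getting stuck. The relevant obstruction is concrete: if at step $j$ the partial products satisfy $d_i p_j > D_i$ for all $i \in \{1,2,3\}$, multiplying gives $(d_1 d_2 d_3) p_j^3 > D_1 D_2 D_3$, so one needs $p_j^3 \le D_1D_2D_3/(d_1d_2d_3)$ at every step. The sieve condition $p_1\cdots p_{2j}p_{2j+1}^3 \le D$ supplies exactly this bound at \emph{odd} steps once $D_1D_2D_3 \ge D$, but says nothing at even steps — and indeed the paper's Case~4 must handle a greedy that stalls at an even $j \ge 6$, resolving it by noting $p_j^7 \le p_1\cdots p_4 p_5^3 \le D$ so $p_j \le D^{1/7} \le x^{1/12 - 6\delta}$ and then inflating $D_2$ by exactly that factor before restarting greedy on the remaining two boxes. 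Your sketch ("a finite case split on the number of big primes") points in a different direction — the obstruction is not really about big primes at all but about this even-step parity issue, which persists even when every prime is well below $N/x^\delta$. Without an argument that handles the stuck-greedy scenario, the construction in Stage 2 is not known to produce the claimed factorization, and the LP feasibility alone does not save it.
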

\begin{proof}
Let $d=p_1\cdots p_r\in\mathcal{D}^+$. We split the argument into several cases depending on the size of the factors.

\textbf{Case 1: $p_1\ge D^2/x^{1-3\delta}$.}

Let $D_1:=N x^{-\delta}$, $d_2:=D_2:=p_1$ and $D_3:=D x^\delta/(Np_1)$. We note that $D_1,D_2,D_3\ge 1$ from our bounds on $N$ and $p_1^3\le D$. Since $p_2\le p_1\le D^{1/3}$ and $D_1D_3=D/p_1$ we see that $p_2^2\le D_1D_3$, so either $p_2\le D_1$ or $p_2\le D_3$. Moreover, we see that since $p_1\cdots p_{j-1}p_j^2\le D$ for all $j\le r$, we have that $p_j^2\le D_1D_3/(p_2\cdots p_{j-1})$ for all $j\ge 3$. Thus, by considering $p_2$, $p_3$, $\dots$ in turn, we can greedily form products $d_1$ and $d_3$ with $d_1\le D_1$ and $d_3\le D_3$ and $d_1d_3=p_2\cdots p_r$. We now see that since $D^2/x^{1-\delta}\le p_1\le D^{1/3}$, we have
\begin{align*}
d_1&\le D_1\le \frac{N}{x^\delta},\\
N^2d_2d_3^2&\le N^2D_2D_3^2\le x^{2\delta}D^2/p_1<x^{1-\delta},\\
N^2d_1d_2^4d_3^3&\le N^2D_1D_2^4D_3^3\le x^{2\delta}D^3p_1<x^{2-\delta},\\
N d_1d_2^5d_3^2&\le ND_1D_2^5D_3^2\le x^\delta D^2 p_1^3<x^{2-\delta},
\end{align*}
so this factorization satisfies the conditions.

\textbf{Case 2: $p_2p_3\ge D^2/x^{1-3\delta}$.}

This is similar to the case above. Without loss of generality we may assume we are not in Case 1, so $p_1,p_2,p_3,p_4<D^2/x^{1-3\delta}$. We now set $D_1:=N x^{-2\delta}$, $d_2:=D_2:=p_2p_3$ and $D_3:=x^{2\delta} D/(Np_2p_3)$. Note that
\[
p_2p_3\le p_2^{1/3}(p_1p_2p_3^3)^{1/3}\le \frac{D^{2/3}}{x^{1/3-\delta}}D^{1/3}= \frac{D}{x^{1/3-\delta}}.
\]
In particular, $D_1,D_2,D_3\ge 1$ and we have that $D_1D_3=D/p_2p_3\ge x^{1/3-\delta}$. Thus $p_1^2<D^4/x^{2-6\delta}< x^{1/3-\delta}\le D_1D_3$, and $p_4^2\le D_1D_3/p_1$ since $p_1p_2p_3p_4^2\le D$. Moreover, for $j\ge 5$ we have $p_1\cdots p_{j-1}p_j^2\le D$, so $p_j^2\le D_1D_3/(p_1p_4\dots p_{j-1})$. We can greedily form products $d_1\le D_1$ and $d_3\le D_3$ out of $p_1p_4\cdots p_r$, by considering each prime in turn. We now see that since $D^2/x^{1-3\delta}\le p_2p_3<x^{1/4}$, we have
\begin{align*}
d_1&\le D_1\le\frac{N}{x^\delta},\\
N^2d_2d_3^2&\le N^2D_2D_3^2\le x^{2\delta} D^2/(p_2p_3)\le x^{1-\delta},\\
N^2d_1d_2^4d_3^3&\le N^2D_1D_2^4D_3^3\le x^{2\delta}D^3p_2p_3\le x^{2-\delta},\\
Nd_1d_2^5d_3^2&ND_1D_2^5D_3^2\le x^\delta D^2(p_2p_3)^3\le x^{2-\delta},
\end{align*}
so this gives a suitable factorization.

\textbf{Case 3: $p_1p_4\ge D^2/x^{1-3\delta}$.}

We may assume we are not in Case 1 or 2. In particular $\max(p_1,p_2p_3)< D^2/x^{1-3\delta}$, so $p_1p_4\le  p_1(p_2p_3)^{1/2}<D^3/x^{3/2-9\delta/2}<D/x^{1/3-\delta/2}$, and the argument is completely analogous to the case above, choosing $D_1:=N x^{-2\delta}$, $d_2:=D_2:=p_1p_4$ and $D_3:=x^{2\delta} D/(Np_1p_4)$, using the fact that $D^2/x^{1-3\delta}\le p_1p_4<x^{1/4}$.

\textbf{Case 4: $p_1p_4<D^2/x^{1-3\delta}$ and $p_2p_3<D^2/x^{1-3\delta}$. }

We set $D_1:=N x^{-\delta}$, $D_2:=D^2/x^{1-3\delta}$ and $D_3:=x^{1-2\delta}/(D N)$, noting that these are all at least 1. We see that one of $D_1$ or $D_3$ is also at least $D^2/x^{1-3\delta}$, since their product is $x^{1-3\delta}/D>x^{9/24}>D^4/x^{2-6\delta}$. We now wish to greedily form products $d_1\le D_1$, $d_2\le D_2$ and $d_3\le D_3$ by considering primes in turn. We start with $d_2=p_1p_4<D_2$ and either $d_1=1$ and $d_3=p_2p_3$ or $d_1=p_2p_3$ and $d_3=1$ depending on whether $p_2p_3>D_1$ or not. We now greedily form a sequence, where at the $j^{th}$ step we replace one of the $d_i$ with $d_ip_j$ provided $d_i p_j<D_i$ (the choice of $i\in\{1,2,3\}$ does not matter if there are multiple possibilities with $d_i p_j<D_i$), and we start with $j=5$. We stop if either we have included our final prime $p_r$ in one of the $d_i$, or there is a stage $j$ when $p_j d_1>D_1$, $p_j d_2>D_2$ and $p_j d_3>D_3$. If we stop because we have exhausted all our primes, then we see that we have found $d_1\le D_1$, $d_2\le D_2$ and $d_3\le D_3$ such that $d_1d_2d_3=p_1\cdots p_r$. It is then easy to verify that
\begin{align*}
d_1&\le D_1\le \frac{N}{x^\delta},\\
N^2d_2d_3^2&\le N^2D_2D_3^2\le x^{1-\delta},\\
N^2d_1d_2^4d_3^3&\le N^2 D_1D_2^4D_3^3\le \frac{D^5}{x^{1-5\delta}}<x^{2-\delta},\\
Nd_1d_2^5d_3^2&\le ND_1D_2^5D_3^2\le \frac{D^8}{x^{3-10\delta}}<x^{2-\delta}.
\end{align*}
Thus we just need to consider the situation when at some stage $j$ we have $p_j d_1>D_1$, $p_j d_2>D_2$ and $p_j d_3>D_3$. We see that this must first occur when $j$ is even, since for odd $j$ we have $p_j^3\le D/(p_1\cdots p_{j-1})=D_1D_2D_3/(d_1d_2d_3)$ and so $p_j\le \max(D_1/d_1,D_2/d_2,D_3/d_3)$. We must also have $j\ge 6$ since $j>4$ and is even. This implies $(p_j)^7\le p_1\cdots p_4p_5^3\le D$, so $p_j\le D^{1/7}\le x^{1/12-6\delta}$. 

We now set $d_2':=d_2p_j$ and $D_2':=D_2x^{1/12-6\delta}$, so that $D_2\le d_2'\le D_2'$. We set $D_3':=D_2 D_3/d_2'$. For all $\ell>j$ we have $p_{\ell}^2<D_1D_3'/(d_1d_3p_{j+1}\cdots p_{\ell-1})$, so we can greedily make products $d_1'\le D_1$ and $d_3'\le D_3'$ with $d_1'd_3'=d_1d_3p_{j+1}\cdots p_r$. In particular, we then have $d=d_1'd_2'd_3'$. We then verify
\begin{align*}
d_1'&\le D_1\le \frac{N}{x^\delta},\\
N^2 d_2'(d_3')^2&\le N^2d_2'(D_3')^2=\frac{N^2 D_2^2 D_3^2}{d_2'}\le N^2 D_2 D_3^2=x^{1-\delta},\\
N^2 d_1' (d_2')^4(d_3')^3&\le N^2 D_1 (d_2')^4(D_3')^3\le N^2D_1 D_2^4D_3^3 x^{1/12-6\delta}\le \frac{D^5}{x^{11/12+\delta}}<x^{2-\delta},\\
N d_1' (d_2')^5 (d_3')^2 &\le  N^2 D_1 (d_2')^5 (D_3')^2\le N^2D_1D_2^5D_3^2 x^{1/4-18\delta} \le \frac{D^8}{x^{11/4+8\delta}}\le x^{2-\delta}.
\end{align*}
We have now covered all cases, and so completed the proof of Proposition \ref{prpstn:Factorization}.
\end{proof}
%
%
%
%
%%%%%%%%%%%%%%%%%%%%%%%%%%%%%%%%%%%%%%%%%%%%%%%%%%%%%%%%%%
%
%
%
%
\begin{rmk}
By considering the situation when $N=x^{1/3}$, $p_1\approx p_2\approx  D^{2/7}$, $p_3\approx p_4\approx D^{1/7}$, and $p_j$ for $j\ge 5$ are small but satisfy $p_1\cdots p_r\approx D$, we see that Proposition \ref{prpstn:Factorization} cannot be extended to $D=x^{7/12+\delta}$ unless we impose further restrictions on $N$ or the $p_i$.
\end{rmk}
%
%
%
%
%%%%%%%%%%%%%%%%%%%%%%%%%%%%%%%%%%%%%%%%%%%%%%%%%%%%%%%%%%
%
%
%
%
\bibliographystyle{plain}
\bibliography{Bibliography}

\end{document}